\numberwithin{equation}{section}
\newtheorem{definition}{Definition}[section]
\newtheorem{theorem}{Theorem}[section]
\newtheorem{conjecture}{Conjecture}
\newtheorem*{remark}{Remark}
\newtheorem{corollary}{Corollary}[section]
\newtheorem{proposition}{Proposition}[section]
\newtheorem{lemma}{Lemma}[section]
\title{Semi-flat constant scalar curvature K\"ahler metric on elliptic surface}
\author{Zhenqu Wang, Zhenlei Zhang}
\begin{document}

\maketitle
\date{}

\begin{abstract}
  We introduce and construct a novel type of canonical metric: the semi-flat constant scalar curvature K\"hler (semi-flat cscK) current, which naturally arises in Calabi-Yau fibrations. For a given elliptic surface X with a holomorphic section, We explicitly construct the desired semi-flat cscK current and analyze its behavior along singular parts. We establish its uniqueness under the condition that X possesses at least one singular fiber other than of type $I_b$ or $I_b^*$. These results contribute to a geometric uniformization program for elliptic surfaces.
\end{abstract}
\tableofcontents
\setcounter{section}{-1}
\section{Introduction}
\ \ \ \ The search for canonical metrics on compact K\"ahler manifolds $X$ is a central problem in complex geometry. In this paper, we assume that $X$ is
minimal  with semi-ample canonical bundle $K_X$ and positive Kodaira dimension, conditions under which the canonical class (denoted by $K_X$ as well) lies on the boundary of the K\"ahler cone. We then consider the canonical metrics related to $K_X$.

There are several constructions to the canonical metrics in this case. Firstly, for projective $X$, Song and Tian \cite{SongTian06} introduced the generalized K\"ahler-Einstein current $\omega_{can}$ on the canonical model $X_{can}$ of $X$. This current satisfies the condition $f^*\omega_{can}\in 2\pi K_X$, where $f:X\rightarrow X_{can}$ is the pluricanonical map. They established its existence, first for the case of Kodaira dimension 1 in \cite{SongTian06}, and later extended this result to all positive Kodaira dimension in \cite{SoTi12}. This result extends the known existence of (possibly singular) K\"ahler Einstein metrics for varieties of general type, which was established earlier for case $K_X>0$ by Aubin \cite{Aubin76}, Yau \cite{Yau78}, and more general case by Tsuji \cite{Tsuji88}, 
Kobayashi \cite{Ko85}, Tian-Zhang \cite{TiZh06}, Eyssidieux-Guedj-Zeriahi \cite{EyViZe09} etal. The notion of generalized K\"ahler-Einstein metric is defined as follows:
\begin{definition}[Song-Tian \cite{SongTian06}]
Let $f=|K_X^m|:X\rightarrow X_{can}\subset \mathbb{P}^{N_m},N_m=\dim H^0(X,H_X^m)$ be the pluri-canonical map of $X$ to its canonical model
$X_{can}$. Let $\omega$ be a possibly singular K\"ahler metric on $X_{can}$ such that $f^*\omega\in -2\pi c_1(X)$. Then $\omega$ is called a generalized K\"ahler-Einstein metric if on $X_{can}^*$:
\begin{equation*}
  Ric(\omega)=-\omega+\omega_{WP}
\end{equation*}
where $X_{can}^*$ is the set of all points $s\in X_{can}$ with $X_s=f^{-1}(s)$ being nonsingular, and $\omega_{WP}$ is the Weil-Petersson metric on the base.

In general, given any Calabi-Yau fibration: $f:X\rightarrow \Sigma$ over an algebraic variety $\Sigma$. If $X$ is nonsingular and
$\Sigma^*$ is the set of all points $s\in \Sigma$ with $X_s=f^{-1}(s)$ being nonsingular. Then a possibly singular K\"ahler metric on $\Sigma$
is called a generalized K\"ahler-Einstein metric if on $\Sigma^*$:
\begin{equation*}
  Ric(\omega)=\lambda\omega+\omega_{WP},\lambda=-1,0,1
\end{equation*}
\end{definition}
Under this definition, the (possibly singular) K\"ahler Einstein metric on a minimal algebraic manifold of general type mentioned above can be regarded as a specific instance of a generalized K\"ahler Einstein metric according to the first part of this definition.

The generalized K\"ahler-Einstein metric can be elegantly obtained by using the (normalized) K\"ahler-Ricci flow equation, first introduced by Hamilton \cite{Hamilton82} and adapted to the K\"ahler setting by Cao \cite{Cao86}.
\begin{equation}\label{Smooth KRF}
\begin{split}
\left\{
\begin{aligned}
  &\frac{\partial \omega(t)}{\partial t}=-Ric(\omega(t))-\omega(t)\\
  &\omega(t)=\omega_0
\end{aligned}
\right.
\end{split}
\end{equation}
Song-Tian established the long-time existence and convergence of this equation when $K_X$ is semi-ample and positive Kodaira dimension:

\begin{theorem}[Song-Tian \cite{SoTi12}]
Let X be a nonsingular algebraic variety with semi-ample canonical line bundle $K_X$ and so $X$ admits an algebraic fibration $f:X\rightarrow X_{can}$
over its canonical model $X_{can}$. Suppose $0<\dim X_{can}=\kappa<\dim X=n$. Then for any initial K\"ahler metric, the K\"ahler Ricci flow \ref{Smooth KRF} has a global solution $\omega(t,\cdot)$ for all time $t\in [0,\infty)$ satisfying:
\begin{itemize}
\item[1.] $\omega(t, \cdot)$ converges to $f^*\omega_{can}\in -2\pi c_1(X)$ as currents for a positive closed $(1,1)$-current $\omega_{can}$ on $X_{can}$ with continuous potential.
\item[2.] $\omega_{can}$ is smooth on $X_{can}^{\circ}$ and satisfies the generalized K\"ahler-Einstein equation on $X_{can}^{\circ}$:
\begin{equation*}
  Ric(\omega_{can})=-\omega_{can}+\omega_{WP}.
\end{equation*}
\end{itemize}
\end{theorem}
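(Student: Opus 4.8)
\noindent\emph{Strategy.} The plan is to recast the normalized K\"ahler--Ricci flow as a scalar parabolic complex Monge--Amp\`ere equation with a degenerating reference form, to establish a priori estimates that are uniform in time but only locally uniform away from the singular fibres, and then to pass to the limit. Since $K_X$ is semi-ample we have $f:X\to X_{can}\subset\mathbb P^{N_m}$ with $f^*\mathcal O(1)=mK_X$; fix $\chi:=\tfrac1m f^*\omega_{FS}$, a smooth closed semi-positive $(1,1)$-form with $[\chi]=2\pi c_1(K_X)=-2\pi c_1(X)$ that is strictly positive in the $\kappa$ base directions, and a smooth volume form $\Omega$ on $X$ with $Ric(\Omega)=-\chi$. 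Along the flow the class $[\omega(t)]=e^{-t}[\omega_0]+(1-e^{-t})[\chi]$ stays in the open K\"ahler cone for every finite $t$ and converges to $[\chi]$; with $\omega_{\hat t}:=e^{-t}\omega_0+(1-e^{-t})\chi>0$ and $\omega(t)=\omega_{\hat t}+\tfrac{i}{2\pi}\partial\bar\partial\varphi$, the flow \ref{Smooth KRF} is equivalent to
\begin{equation*}
\frac{\partial\varphi}{\partial t}=\log\frac{(\omega_{\hat t}+\tfrac{i}{2\pi}\partial\bar\partial\varphi)^n}{\Omega}-\varphi,\qquad\varphi(0)=0,
\end{equation*}
and long-time existence of a smooth solution on $X\times[0,\infty)$ follows from the standard maximal-existence-time criterion for the K\"ahler--Ricci flow, because $[\omega_{\hat t}]$ remains K\"ahler for all $t$.

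\noindent\emph{A priori estimates.} The maximum principle gives $\varphi\leq C$ and $\dot\varphi\leq C$ directly. The delicate point is the lower bound $\varphi\geq -C$: I would push $\Omega$ forward to a measure $f_*\Omega=e^{F}\chi_{can}^{\kappa}$ on $X_{can}$ whose singularities are controlled by the discriminant of $f$ and Kodaira's canonical bundle formula, so that $e^{F}\in L^{p}$ for some $p>1$; then the limiting equation $\omega_{can}^{\kappa}=e^{u_{\infty}}f_*\Omega$ on $X_{can}$ admits a bounded solution $u_{\infty}$ by the pluripotential-theoretic estimates of Ko\l odziej and Eyssidieux--Guedj--Zeriahi, and comparing $\varphi$ against a barrier assembled from $f^{*}u_{\infty}$, a small multiple of $f^{*}\log\|S_D\|^{2}$ with $S_D$ a defining section of the discriminant, and a term absorbing the initial data yields $\varphi\geq -C$ after letting the small multiple tend to $0$. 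Since $[\omega(t)]|_{X_s}=e^{-t}[\omega_0]|_{X_s}$, the fibres collapse: $\mathrm{Vol}_{\omega(t)}(X_s)=e^{-(n-\kappa)t}\,\mathrm{Vol}_{\omega_0}(X_s)$ and $\mathrm{Vol}_{\omega(t)}(X)\sim e^{-(n-\kappa)t}$. Finally, on any compact $K\Subset f^{-1}(X_{can}^{\circ})$ — where the fibres are smooth Calabi--Yau — a parabolic Aubin--Yau/Schwarz-lemma estimate applied to $\log\mathrm{tr}_{\omega_{\hat t}}\omega(t)-A\varphi$ together with the $C^{0}$ bounds gives $\omega(t)\leq C_K\,\omega_{\hat t}$ on $K$; with a matching bound $\dot\varphi\geq -C_K$ the equation is uniformly parabolic on $K$ after the natural fibrewise rescaling, and local Evans--Krylov and Schauder theory then give $\|\varphi\|_{C^{k}(K\times[0,\infty))}\leq C_{k,K}$ for all $k$.

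\noindent\emph{Passage to the limit.} Using the uniform $L^{\infty}(X)$ bound, the local $C^{\infty}$ bounds on $f^{-1}(X_{can}^{\circ})$, and uniqueness of the solution of the limiting Monge--Amp\`ere equation, $\varphi(t)$ converges as $t\to\infty$ — weakly as currents on $X$ and in $C^{\infty}_{loc}$ on $f^{-1}(X_{can}^{\circ})$ — to a limit $\varphi_{\infty}=f^{*}u_{\infty}$ constant along fibres, with $u_{\infty}$ a bounded $\chi_{can}$-plurisubharmonic function on $X_{can}$, continuous by Ko\l odziej's estimate. Setting $\omega_{can}:=\chi_{can}+\tfrac{i}{2\pi}\partial\bar\partial u_{\infty}$ gives $\omega(t)\to f^{*}\omega_{can}\in -2\pi c_1(X)$ as currents, proving (1). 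On $X_{can}^{\circ}$ the $C^{\infty}$ convergence forces $\dot\varphi\to 0$, so integrating $\omega(t)^{n}=e^{\dot\varphi+\varphi}\Omega$ over the collapsing fibres — the variation of the fibrewise Calabi--Yau volume producing precisely the Weil--Petersson term — passes the equation to $\omega_{can}^{\kappa}=e^{u_{\infty}}f_*\Omega$ on $X_{can}^{\circ}$; applying $-\tfrac{i}{2\pi}\partial\bar\partial\log$ to both sides and using $Ric(\Omega)=-\chi$ together with $Ric(f_*\Omega)=\omega_{WP}-\chi_{can}$ yields $Ric(\omega_{can})=-\omega_{can}+\omega_{WP}$, which is (2).

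\noindent\emph{Main obstacle.} The hard part is the a priori estimates in the collapsing regime: the lower $C^{0}$ bound near the singular fibres, which cannot come from a bare maximum principle and forces one to bring in the limiting Monge--Amp\`ere equation on the base together with $L^{\infty}$-estimates from pluripotential theory; and the second-order estimate, which is only available locally away from the discriminant and must be run against the reference $\omega_{\hat t}$ that degenerates in the limit. Both hinge on the careful choice of auxiliary and barrier functions built from Kodaira's canonical bundle formula and from a defining section of the discriminant divisor.
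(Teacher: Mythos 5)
This theorem is not proved in the paper: it is quoted verbatim as background from Song--Tian \cite{SoTi12}, so there is no internal proof to compare against. Your outline is, in substance, the strategy of the original Song--Tian argument: reduction to a parabolic complex Monge--Amp\`ere equation with reference form $\omega_{\hat t}=e^{-t}\omega_0+(1-e^{-t})\chi$, long-time existence from the fact that $[\omega_{\hat t}]$ stays K\"ahler, the uniform $C^0$ bound obtained by comparison with the bounded solution of the limiting equation $\omega_{can}^{\kappa}=e^{u_\infty}f_*\Omega$ on the base via Ko\l odziej/Eyssidieux--Guedj--Zeriahi, local Schwarz-lemma and Evans--Krylov estimates away from the discriminant, and the appearance of $\omega_{WP}$ from fibre integration of the Calabi--Yau volumes. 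Two places where your sketch is too quick to count as a proof: (i) the full convergence of $\varphi(t)$ as $t\to\infty$ does not follow merely from ``uniqueness of the limiting Monge--Amp\`ere equation'' --- one must first extract subsequential limits with enough regularity to identify them with $f^*u_\infty$, and the passage from subsequential to full convergence (and the continuity of the potential claimed in item 1) requires the quantitative stability estimates that occupy a substantial part of \cite{SoTi12}; (ii) the assertion $\dot\varphi\to 0$ on $f^{-1}(X_{can}^{\circ})$ and the identification $Ric(f_*\Omega)=\omega_{WP}-\chi_{can}$ need the semi-flat/fibrewise Ricci-flat comparison metric and Yau's theorem on the fibres to be set up explicitly, not just invoked. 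As a blind reconstruction of the known proof's architecture it is accurate; as a self-contained proof it leaves the genuinely hard analytic steps asserted rather than established.
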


Song-Tian \cite{SongTian06} also conjectured the convergence in the Gromov-Hausdorff topology to $(X_{can},\omega_{can})$. Progress towards this conjecture can be found in \cite{Gross-Tosatti-Zhang20,GuoSongWe16,JianSong22,SongTian12,SongTianZhang19,ZhZh19}. For more on the K\"ahler-Ricci flow (\ref{Smooth KRF}) in the smooth minimal model case, especially regarding its long-time behavior, the structure of its singularities, and collapsing phenomena, see \cite{Cao86, Fong-FT-ZY20, Fong-FT-ZZ15, GuoSongWe16, JianSong22, SongTian06, SongTian12, SongTian16, Song-Tian17, SongTianZhang19, SoWe11, TiZh06, Tian-Zhang07, TianZLZhang16, TianZLZhang21, Tosatti10, TWY18, Tsuji88, Wang17, Z.Zh09, Z.Zh10}.

On the another hand, from the view point of constant scalar curvature K\"ahler (cscK) metrics, by the result of Song-Shi-Jian \cite{JianShiSong18} there always exist constant scalar curvature K\"ahler (cscK) metrics $\omega_{\epsilon}$ in the class $K_X+ \epsilon [\omega]$ for any given K\"ahler class $[\omega]$ and sufficiently small $\epsilon$, a candidate of "canonical metric" around the canonical class $K_X$ (the existence of csck is valid even for nef $K_X$ by Dyrefelt \cite{Dy22} and Song \cite{Song20}). It is also conjectured by Song-Shi-Jian \cite{JianShiSong18} that the cscK metrics in perturbed class converge in the Gromov-Hausdorff sense as well to $(X_{can},\omega_{can})$. Recent progress to the conjecture consists of Liu \cite{Liu20} and Guo-Jian-Shi-Song \cite{GuoJianShiSong24}. See \cite{Fine04,Dyrefelt20,Zh25} for more related works.

Finally, since the pluricanonical map $f:X\rightarrow X_{can}$ in our case is a Calabi-Yau fibration which mean the regular fibers $f^{-1}(s),s\in X_{can}$ possess trivial first Chern class. Semi-flat metrics arise naturally in the context of a Calabi-Yau fibration $f:X\rightarrow \Sigma$. On an open subset $U \subset \Sigma$ where the fibers $f^{-1}(s)$ for $s \in U$ are nonsingular, a semi-flat metric on $f^{-1}(U)$ is defined as a K\"ahler metric whose restriction to each fiber is Ricci-flat.

In this paper, we focus on searching for the "canonical metric" in the field of semi-flat metrics. Let us start with some frameworks: let $\Phi: X \to \Sigma_g$ be a minimal elliptic surface with a holomorphic section $s:\Sigma_g\to X$. Note that the existence of a section is equivalent to that each fibre has multiplicity $1$. Following Kodaira's work \cite{Ko63} the elliptic surface $(X,\Phi,s)$ can be reconstructed via a functional invariant $J: \Sigma_g \to \mathbb{P}^1$ and related  elliptic monodromy $\rho: \pi_1(\Sigma_g^*) \to \mathrm{SL}_2(\mathbb{Z})$, where $\Sigma_g^*$ is $\Sigma_g$ punctured at finitely many points. Choose a multi-valued period map $\omega: \Sigma_g^* \to \mathbb{H}^+$ such that $ J =j \circ \omega $ for $j$ the elliptic modular function, a classic object in the theory of modular forms \cite{Zagier08}. The analytic continuation of $\omega$ yields a monodromy representation $\rho': \pi_1(\Sigma_g^*) \to \mathrm{PSL}_2(\mathbb{Z})$, then $\rho: \pi_1(\Sigma_g^*) \to \mathrm{SL}_2(\mathbb{Z})$ is a lift of $\rho'$. The desired $(X,\Phi,s)$ with prescribed invariants $(J,\rho)$ can be constructed uniquely \cite{Ko63}.

Specifically, for an open subset $B \subset \Sigma_g$ where the fibers $\Phi^{-1}(s)$ are nonsingular for all $s \in B$, there exists a standard semi-flat, Ricci-flat K\"ahler metric on $\Phi^{-1}(B)$ \cite{GrWi20}. Such a metric is also known as a semi-flat Calabi-Yau metric. It is naturally to ask for a global canonical semi-flat metric over the elliptic surface $X$. In fact what we are going to seek for is a more robust structure:

\begin{definition}
    A semi-flat constant scalar curvature K\"ahler (semi-flat cscK) current $\omega$ on a minimal elliptic surface $\Phi:X\rightarrow \Sigma_g$ is a closed positive $(1,1)$-current on $X$, whose restriction to $X^*:=X\setminus \Phi^{-1}(\mathcal{P})$ is a semi-flat cscK metric, where $\mathcal{P}\subset \Sigma_g$ is a finite set.
\end{definition}

Let $H(X,\Phi,s)$ denote the group of automorphisms that preserve the fibration structure and the holomorphic section:
\begin{equation*}
    H(X,\Phi,s)=\{(h',h)\in Aut(X)\times Aut(\Sigma_g)| \Phi\circ h'=h\circ \Phi, h'\circ s=s\circ h\}
\end{equation*}
The main results of this paper can be stated as follows:

\begin{theorem}\label{T1}
Let $(X,\Phi,s)$ be an elliptic surface with a section. There exists an $H(X,\Phi,s)$-invariant semi-flat cscK current $\eta_X$ on $X$ with regular fiber volume $1$ and scalar curvature $-3$. Furthermore, if $X$
possesses at least one singular fiber $X_p$ other than of type $I_b,I_b^*$,
then this current is unique.
\end{theorem}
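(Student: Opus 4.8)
The plan is to reduce the cscK equation on $X^*$ to a scalar equation on the punctured base $\Sigma_g^*$, to recognise that equation as a generalized K\"ahler--Einstein equation so that Song--Tian's current $\omega_{can}$ supplies the solution, and then to read off $H(X,\Phi,s)$-invariance and the conditional uniqueness from the behaviour of this equation near the singular fibres, which is dictated by Kodaira's classification.

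First, using Kodaira's presentation of $(X,\Phi,s)$ through its functional and homological invariants and the period function $\tau$, and using the section $s$ to fix the origin $z=0$ of each fibre, I write a general semi-flat K\"ahler form on $\Phi^{-1}(\Sigma_g^*)$ in a local chart over the base as
\[
\eta=\frac{i}{2}\,\frac{W}{\mathrm{Im}\,\tau}\,(dz+a\,dt)\wedge\overline{(dz+a\,dt)}+\frac{i}{2}\,B\,dt\wedge d\bar t ,
\]
with $W,B>0$ and $a$ functions on the base and $d\eta=0$ imposing a $\bar\partial$-equation on $a$. Requiring the restriction of $\eta$ to each fibre to be Ricci-flat (hence flat) forces $W$ to be constant, and $W\equiv 1$ is precisely the normalization that the regular fibres have $\eta$-volume $1$. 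Setting $\omega_0:=\frac{i}{2}B\,dt\wedge d\bar t$ for the horizontal part and using the local formula $\omega_{WP}=-\frac{i}{2}\partial\bar\partial\log\mathrm{Im}\,\tau$ for the Weil--Petersson form on $\Sigma_g^*$, a computation of $\mathrm{Ric}(\eta)$ — in which the twisting terms contribute a fixed multiple of $\omega_{WP}$ of O'Neill type — shows that, under these normalizations, $\eta$ has constant scalar curvature $-3$ if and only if $\omega_0$ satisfies the generalized K\"ahler--Einstein equation
\[
\mathrm{Ric}(\omega_0)=-\omega_0+\omega_{WP}\qquad\text{on }\Sigma_g^* ,
\]
the value $-3$ decomposing as $-1$ from the term $-\omega_0$ and $-2$ from the twisting of the flat fibres. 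Thus semi-flat cscK currents with regular fibre volume $1$ and scalar curvature $-3$ correspond exactly to solutions $\omega_0$ of this equation on $\Sigma_g^*$ whose associated metric $\eta$ extends to a closed positive $(1,1)$-current on $X$.

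Since in our setting the canonical model of $X$ is $\Sigma_g$ and its regular locus is $\Sigma_g^*$, Song--Tian's generalized K\"ahler--Einstein current $\omega_{can}$ solves this equation on $\Sigma_g^*$ and has continuous potential on $\Sigma_g$. Let $\eta_X$ be the semi-flat metric built from $\Phi^*\omega_{can}$ together with the fibrewise flat metrics of volume $1$. That $\eta_X$ extends to a closed positive current on $X$ follows from the asymptotics of $\tau$ in Kodaira's table: near an $I_b$ or $I_b^*$ fibre one has $\mathrm{Im}\,\tau\sim c\log|t|^{-1}$, so $\omega_{WP}\sim|t|^{-2}(\log|t|)^{-2}\,\frac{i}{2}dt\wedge d\bar t$ is integrable and the vertical term $\sim(\mathrm{Im}\,\tau)^{-1}\to 0$, while near a fibre of type $II,III,IV,IV^*,III^*,II^*$ the period extends after a ramified base change and $\mathrm{Im}\,\tau$ stays bounded; in every case $\eta_X$ has locally finite mass around $\Phi^{-1}(\mathcal{P})$, so its trivial extension is closed and positive by the Skoda--El Mir extension theorem. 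For invariance, the functional and homological invariants, the period function, the Weil--Petersson form and the section are all canonically attached to $(X,\Phi,s)$, so for $(h',h)\in H(X,\Phi,s)$ the current $(h')^*\eta_X$ is again of the same type and corresponds to the solution $h^*\omega_{can}$ of the same generalized K\"ahler--Einstein equation; by uniqueness of $\omega_{can}$ among currents with bounded potential on $\Sigma_g$ we get $h^*\omega_{can}=\omega_{can}$, hence $(h')^*\eta_X=\eta_X$ once the vertical data are determined by $\omega_{can}$ and $\tau$ — and in the non-rigid case below one instead selects at the outset the $H$-invariant member of the solution family (by averaging over $H$).

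Finally, uniqueness when some singular fibre $X_p$ is not of type $I_b$ or $I_b^*$. A competing semi-flat cscK current $\eta'$ with the same normalizations restricts on $X^*$ to a metric whose horizontal part is a solution $\omega_0=\omega_{can}+\frac{i}{2}\partial\bar\partial\varphi$ of the generalized K\"ahler--Einstein equation on $\Sigma_g^*$, and the finite-mass requirement on $\eta'$ forces $\omega_0$ to have finite area near every puncture. Subtracting the two equations gives $\frac{i}{2}\partial\bar\partial\bigl(\log(1+\Delta_{\omega_{can}}\varphi)-\varphi\bigr)=0$, hence $1+\Delta_{\omega_{can}}\varphi=e^{\varphi+c}$ for a constant $c$, an equation with the sign favourable to the maximum principle. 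The role of the hypothesis is that at $X_p$ the twist $\omega_{WP}$ stays bounded, so the equation is non-degenerate there, the orbifold-type behaviour of $\omega_{can}$ near $p$ is rigid, $\varphi$ is bounded near $p$, and this anchors a maximum-principle argument on the non-compact base $\Sigma_g^*$ — closed with the one-sided bound $\omega_0\le C\,\omega_{can}$ at the $I_b,I_b^*$ cusps coming from finite area — forcing $\varphi\equiv c$, hence $\omega_0=\omega_{can}$; then $\eta'=\eta_X$ on $X^*$, and since the semi-flat structure keeps the vertical part absolutely continuous, $\eta'$ carries no mass on $\Phi^{-1}(\mathcal{P})$, so $\eta'=\eta_X$ on $X$. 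When every singular fibre is of type $I_b$ or $I_b^*$ the monodromy has no elliptic element, $\omega_{WP}$ is asymptotic to the cusp density at every end, the equation degenerates to an essentially harmonic one there, harmonic-type perturbations survive, and a genuine family of semi-flat cscK currents appears. The step I expect to be the main obstacle is exactly this singular-fibre analysis: running the maximum principle on the non-compact base past the degenerate $I_b,I_b^*$ cusps, and establishing that a single finite-monodromy fibre removes all the cusp-flexibility — equivalently, computing the dimension of the solution family and showing it drops to zero precisely under the theorem's hypothesis.
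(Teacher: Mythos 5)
Your reduction to a base equation is a reasonable instinct, but it has two substantive defects that make the argument fail, and they are exactly the places where the paper does its real work. First, a semi-flat K\"ahler form is \emph{not} determined by its horizontal volume density together with the flat fibre metrics: in the paper's normal form (\ref{ABCD}) there is, besides $\alpha$, an off-diagonal twist $\beta$ (your $a$), constrained by closedness but otherwise free, and the cscK equation (\ref{csc}) only sees the combination $\alpha/v-|\beta|^2$. So even a complete uniqueness statement for the base equation says nothing about $\beta$. In the paper, $\beta$ is killed precisely by invariance under an \emph{elliptic} element of the monodromy group (the substitution $[\zeta,z]\mapsto[-1/\zeta,-z/\zeta]$ in Theorem \ref{universal-rigidity}, strengthened in Lemma \ref{strongversion}), and the hypothesis that some fibre is not of type $I_b,I_b^*$ is used exactly to guarantee such an element exists. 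Your maximum-principle argument addresses only the conformal factor $\varphi$ of the horizontal part and leaves the twist untouched, so it cannot yield uniqueness; and your existence construction never produces a twist compatible with closedness, positivity and monodromy-equivariance (in the paper this is the explicit potential $-\log v^2+y^2/v$ and Lemma \ref{G.I}).

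Second, the identification of the horizontal part with Song--Tian's $\omega_{can}$ is wrong. With the correct constant the horizontal density of a semi-flat cscK metric with fibre volume $1$ and scalar curvature $-3$ satisfies $\mathrm{Ric}(\omega_0)=-\tfrac32\omega_0+\omega_{WP}$, and after rescaling by $\tfrac32$ the resulting current on $\Sigma_g$ is $\tfrac32\eta_J$, which by equation (\ref{cscK limits}) satisfies the generalized K\"ahler--Einstein equation \emph{twisted by} $-2\pi\sum_p(\frac{d_p}{\mu_p}-1)[p]$: it has cone singularities of angle $2\pi d_p/\mu_p$ over $J^{-1}(\{0,1\})$ and cusps over the poles of $J$. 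Song--Tian's $\omega_{can}$ is the solution with continuous potential across $\mathcal{P}$, so it is a genuinely different current whenever some $d_p\neq\mu_p$; building $\eta_X$ from $\Phi^*\omega_{can}$ therefore produces the wrong object, and the naturality-of-$\omega_{can}$ argument for $H(X,\Phi,s)$-invariance is aimed at the wrong target. Finally, you acknowledge yourself that the singular-fibre analysis (the boundary behaviour needed both to close the maximum principle at the cusps and to extend the current across $\Phi^{-1}(\mathcal{P})$) is not carried out; in the paper this is the entire content of Section 4 (the fibre-by-fibre $C^{\alpha}$-vanishing potentials, the double logarithmic poles along $I_b,I_b^*$, and the Harvey--Polking extension across the residual finite set), and of the covering-space bookkeeping in Section 3 that transports the rigidity statement from $\mathbb{H}^+\times_{Id}\mathbb{C}$ to $X^*$.
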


Let $d$ be the degree of the Jacobian $J:\Sigma_g \rightarrow \mathbb{P}^1$, and $\chi=h^0(X,\mathcal{O}_X)-h^1(X,\mathcal{O}_X)+h^2(X,\mathcal{O}_X)$ be the Euler-Poinca\'re character of $X$. We have:
\begin{theorem}\label{T2}
    If the genus $g\ge 1$, there is a unique cohomology class $D_X\in H^{1,1}(X)\cap H^2(X,\mathbb{Q})$, such that an $H(X,\Phi,s)$-invariant semi-flat cscK current $\eta(t)$ in the class $K_X+tD_X$ exists if and only if  $t\in \left(0,1+\frac{2g-2}{\chi}\right)$. The fiber volume is exactly $t$; the scalar curvature of $\eta(t)$ is 
\begin{equation*}
\begin{split}
\text{scalar curvature}&=\frac{-2\pi d}{2g-2+\chi(1-t)}.
\end{split}
\end{equation*}
\end{theorem}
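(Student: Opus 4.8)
The plan is to reduce Theorem~\ref{T2} to a scalar equation on the base curve $\Sigma_g$ and to solve it in a cohomology class whose positivity is dictated by $t$. First I would set up a normal form for $H(X,\Phi,s)$-invariant semi-flat metrics: over a small disc $B\subset\Sigma_g\setminus\mathcal{P}$ with coordinate $z$, trivialize the fibration as $\Phi^{-1}(B)\cong(B\times\mathbb{C})/(\mathbb{Z}+\tau(z)\mathbb{Z})$ with $\tau$ a branch of the period map, and write a semi-flat metric of fiber volume $t$ as
\begin{equation*}
\omega=\Phi^*\omega_B+\frac{i}{2}\,\frac{t}{\operatorname{Im}\tau}\,(dw-\gamma\,dz)\wedge\overline{(dw-\gamma\,dz)},
\end{equation*}
where $\omega_B=\tfrac{i}{2}W\,dz\wedge d\bar z$ is a K\"ahler metric on the base and $\gamma$ is fixed, up to the discrete ambiguity of a flat connection (killed by $H$-invariance), by the holomorphic structure. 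A direct determinant computation gives $\det(g_{i\bar j})=W\,t/\operatorname{Im}\tau$, hence $\operatorname{Ric}(\omega)=\Phi^*\bigl(\operatorname{Ric}(\omega_B)-\omega_{WP}\bigr)$ with $\omega_{WP}=-i\partial\bar\partial\log\operatorname{Im}\tau$ the pulled-back Weil--Petersson form; tracing with respect to $\omega$, which sees only the horizontal direction, and using that on a curve every $(1,1)$-form is a scalar multiple of $\omega_B$, one finds that $\omega$ has constant scalar curvature $c$ if and only if on $\Sigma_g^*$
\begin{equation*}
\operatorname{Ric}(\omega_B)=c\,\omega_B+\omega_{WP}.
\end{equation*}
Thus $\eta(t)$ corresponds to a solution $\omega_B$ of this one-dimensional generalized K\"ahler--Einstein equation, smooth on $\Sigma_g^*$, lying in the class induced by $K_X+tD_X$ and with the local model at each $p\in\mathcal{P}$ forced by the requirement that $\omega$ extend to a positive closed current on $X$---a conical or logarithmic model depending only on the Kodaira type of $X_p$.

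Next I would pin down $D_X$ and the range of $t$ by intersection theory on $X$. From the canonical bundle formula $K_X=\Phi^*M$ with $\deg M=2g-2+\chi$, the section identity $S^2=-\chi$, and $\int_X\omega^2=2t\int_{\Sigma_g}\omega_B$ obtained by fiber integration, the normalization that the fiber volume equal $t$ forces $D_X\cdot F=1$, and then $\int_{\Sigma_g}\omega_B=(2g-2+\chi)+\tfrac{t}{2}D_X^2$. Demanding that this area be positive precisely for $t\in\bigl(0,1+\tfrac{2g-2}{\chi}\bigr)$ forces $D_X^2=-2\chi$; together with $D_X\cdot F=1$ and $H$-invariance this determines $D_X\in H^{1,1}(X)\cap H^2(X,\mathbb{Q})$ uniquely (it is represented by $[S]-\tfrac12\Phi^*c_1(\mathcal{L})$ for the Hodge line bundle $\mathcal{L}$, $\deg\mathcal{L}=\chi$), and gives $\int_{\Sigma_g}\omega_B=2g-2+\chi(1-t)$. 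The upper endpoint is exactly where the induced base class leaves the semipositive cone of $\Sigma_g$, so no semi-flat cscK current can exist there or beyond, while the lower endpoint is positivity of the fiber volume; the hypothesis $g\ge1$ together with $\kappa(X)>0$ guarantees $\deg M>0$ and $X_{can}=\Sigma_g$.

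For $t$ in the open interval the induced class on $\Sigma_g$ has positive area, hence contains K\"ahler forms, and $\operatorname{Ric}(\omega_B)=c\,\omega_B+\omega_{WP}$ is the dimension-one case of the Song--Tian generalized K\"ahler--Einstein equation; I would solve it by invoking that theory, or directly, treating it as a Liouville/Kazdan--Warner-type scalar PDE on a compact Riemann surface with prescribed $L^1$ twist $\omega_{WP}$ and prescribed conical data over $\mathcal{P}$, and then lift the solution via the first step to produce $\eta(t)$. The scalar curvature is read off by integrating the base equation over $\Sigma_g$ as currents,
\begin{equation*}
c\,\bigl(2g-2+\chi(1-t)\bigr)=\int_{\Sigma_g}\operatorname{Ric}(\omega_B)-\int_{\Sigma_g}\omega_{WP},
\end{equation*}
where the Ricci mass receives the conical defects of $\omega_B$ at $\mathcal{P}$ and the Weil--Petersson current receives point masses at the additive singular fibers; the combination should collapse, via Kodaira's classification and the canonical bundle formula, to $-2\pi d$, giving the asserted value. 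Uniqueness of $\eta(t)$ then follows from $H$-invariance together with uniqueness of the solution of the scalar equation in a fixed class.

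The hard part will be this last computation, the exact evaluation of $\int_{\Sigma_g}\operatorname{Ric}(\omega_B)-\int_{\Sigma_g}\omega_{WP}$: one must determine the precise conical or logarithmic model of $\omega_B$ near each $p\in\mathcal{P}$---equivalently, the exact local form a semi-flat cscK current is forced to take near $\Phi^{-1}(p)$ in order to define a positive closed current on $X$ in the prescribed class---and match the resulting point masses against the pole and ramification data of $J$. The bookkeeping genuinely distinguishes the fibers of type $I_b$ and $I_b^*$ from the additive types, the same dichotomy that appears in the uniqueness clause of Theorem~\ref{T1}, and arranging for every contribution to telescope to the stated value is where the main effort lies. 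A secondary point is verifying in the first step that the normal form is exhaustive and that extending $\omega$ to a positive closed current on $X$ forces precisely the conical data that the base equation produces.
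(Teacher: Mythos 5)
Your reduction to the base equation $\mathrm{Ric}(\omega_B)=c\,\omega_B+\omega_{WP}$ is sound and is implicitly what the paper does (its semi-flat ansatz in Section 2 leads to the scalar equation (\ref{csc}), which it solves explicitly by $\omega_B=\lambda\,\eta_J$ rather than by invoking Song--Tian or Kazdan--Warner), and your observation that $\int_{\Sigma_g}\omega_B=(2g-2+\chi)+\tfrac{t}{2}D_X^2$ forces $D_X^2=-2\chi$ is a correct and useful consistency check that also gives a clean route to the ``only if'' direction. The genuine gap is in your identification of $D_X$. The two conditions $D_X\cdot F=1$ and $D_X^2=-2\chi$ do not determine a class in $NS(X)\otimes\mathbb{Q}$: the trivial lattice contains the negative-definite summand $\bigoplus_p R_p$ spanned by the fiber components $\Theta_{pi}$ not meeting the zero section, and one can perturb any candidate by elements of $F^\perp$ without changing either number. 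Your proposed representative $[S]-\tfrac12\Phi^*c_1(\mathcal{L})$ is in fact wrong whenever $X$ has reducible singular fibers or a nontrivial Mordell--Weil contribution: the paper's Proposition \ref{class} shows the actual class is $D_X=[s]+\sum_p N_p+E_X$, where the coefficients of $N_p$ along the $\Theta_{pi}$ are computed case by case for each Kodaira type from the intersection numbers $[\eta_X]\cdot\Theta_{pi}$, and these are obtained only after the local extension analysis of Section 4 (the $C^{\alpha}$-vanishing and double-logarithmic models of the current across each fiber type). ``$H$-invariance'' does not substitute for this computation.

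The same missing local analysis undermines the two remaining steps. First, existence: solving the base equation in a positive class on $\Sigma_g$ does not by itself produce a current in the class $K_X+tD_X$; you must verify that the lifted metric extends as a closed positive $(1,1)$-current across every singular fiber and then compute which class the extension represents, which is exactly where the $N_p$ arise. Second, the scalar curvature: you correctly flag the evaluation of $\int\mathrm{Ric}(\omega_B)-\int\omega_{WP}$ as the hard part, but it is left undone, and it requires the same point-mass data (the $\tfrac{d_p}{\mu_p}-1$ defects of equation (\ref{cscK limits})) together with Riemann--Hurwitz for $J$ to telescope to the stated constant. By contrast, the paper sidesteps all of this at the level of Theorem \ref{T2} itself: having already built the explicit two-parameter family $\eta_X(\delta,\epsilon)=h_{\tilde\omega}^*\eta(\delta,\epsilon)$ with known fiber volume $\epsilon/\delta$ and scalar curvature $-3/\delta$, and having computed $[\eta_X(\epsilon)]=\tfrac{\pi d}{3}[F]+\epsilon\bigl([s]+\chi[F]+\sum_pN_p+E_X\bigr)$, the proof is a one-line rescaling using $K_X=(2g-2+\chi)[F]$. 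To complete your route you would need to import the paper's Sections 4--5 essentially wholesale.
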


Note that the unique metric mentioned in the Theorem \ref{T1} corresponds to $t_0^{-1}\eta (t_0)$ that described in the Theorem \ref{T2}, where:
\begin{equation*}
  t_0=\frac{2g-2+\chi}{\pi d/3+\chi}
\end{equation*}

For comparison, consider the K\"ahler Ricci flow (\ref{Smooth KRF}) on elliptic surface with initial K\"ahler metric $\omega_0$.
Let $X_{s_i}=m_iF_i$ be its singular fibers drived from its canonical model $f:X\rightarrow X_{can}$ with multiplicity $m_i$, $i=1,...,k$.
Song-Tian \cite{SongTian06} proved the long-time existence of the solution $\omega(t)$. They showed that this solution converges to $f^*\omega_{\infty}\in K_X$ and $\omega_{\infty}$
is a generalized K\"ahler-Einstein metric on $X_{can}$ satisfying:
\begin{equation*}
  Ric(\omega_{\infty})=-\omega_{\infty}+\omega_{WP}+2\pi \sum\limits_{i=1}^{k}\frac{m_i-1}{m_i}[s_i]
\end{equation*}
where $[p]$ is the current of integration associated to the divisor $p$ on $\Sigma_g$.

The semi-flat cscK $\omega(t)$ satisfies a special twisted K\"ahler-Ricci flow. For any initial semi-flat cscK current $\omega_0=\eta_X(\delta,\epsilon)=h_{\tilde{\omega}}^*\eta(\delta,\epsilon)$ for a fixed $\epsilon$ (see equation (\ref{eta delta epsilon})), we have a series of semi-flat cscK $\omega(t)$
\begin{equation}\label{SKRF solution}
  \omega(t)=\eta_X((\delta-3/2)e^{-t}+3/2,\epsilon e^{-t}).
\end{equation}
In the language of currents $\eta(t)$ in the Theorem \ref{T2} we have $\omega(t)=a(t)^{-1}\eta(\epsilon e^{-t}a(t))$, where
\begin{equation*}
  a(t)=\frac{2g-2+\chi}{\frac{\pi d}{3}\left((\delta-\frac{3}{2})e^{-t}+\frac{3}{2}\right)+\epsilon e^{-t}\chi}.
\end{equation*}
Then $\omega(t)$ satisfies 
\begin{equation}\label{Singular KRF}
\begin{split}
\left\{
\begin{aligned}
  &\frac{\partial \omega(t)}{\partial t}=-Ric(\omega(t))-\omega(t)-2\pi \sum_{p \in \mathcal{P}} \left(\frac{d_p}{\mu_p}-1+\delta_p\right) [X_p],\\
  &\omega(t)=\omega_0,
\end{aligned}
\right.
\end{split}
\end{equation}
where $[X_p]$ denotes the current of integration along the divisor $X_p = \Phi^{-1}(p)$. The local invariants $d_p=ord_pJ$ and $\mu_p$ is 
\begin{equation*}
\mu_p=\left\{
\begin{aligned}
1, & \ J(p)\ne 0,1,\infty,\\
3, & \ J(p)=0,\\
2, & \ J(p)=1,\\
\infty, &\ J(p)=\infty,
\end{aligned}
\right.
\end{equation*} 
and $\delta_p$ is listed in the table 
\begin{table}[htbp]
  \centering
  \renewcommand{\arraystretch}{1.3}
  \caption{List of $\delta_p$}
  \label{table delta p}
  \begin{tabular}{cccccccccc}
    \toprule
 Fiber types
    & Regular & $\mathrm{I}_b,b\ge 1$ & $\mathrm{I}_b^*,b\ge 0$ & $\mathrm{II}$ & $\mathrm{II}^*$ & $\mathrm{III}$ & $\mathrm{III}^*$ & $\mathrm{IV}$ & $\mathrm{IV}^*$\\
    \midrule
    $\delta_p$ & $0$ & $0$ & $\frac{1}{2}$ & $\frac{1}{6}$ & $\frac{5}{6}$ & $\frac{1}{4}$ & $\frac{3}{4}$ & $\frac{1}{3}$ & $\frac{2}{3}$\\
    \bottomrule
  \end{tabular}
  \renewcommand{\arraystretch}{1}
\end{table}

We discuss the limit of the current $\omega(t)$. Let $\eta_{-1}$ be the Poincar\'e metric on $\mathbb{H}^+$, this metric descends via the elliptic modular funtion
$j:\mathbb{H}^+\cup \{\infty\}\rightarrow \mathbb{P}^1$ to a singular K\"ahler metric $\eta_{-1}'$ on $\mathbb{P}^1$ with constant curvature $-1$,
let $\eta_J:=J^*\eta_{-1}'$. Then as $t\rightarrow 0^+$, the solution (\ref{SKRF solution}) has a limit:
\begin{equation*}
\lim\limits_{t\rightarrow \infty}\omega(t)=\Phi^*\omega_{\infty},\qquad
 \omega_{\infty}=\frac{3}{2}\eta_J.
\end{equation*}
We note that the Weil-Peterssion metric is $\omega_{WP}=\frac{1}{2}\eta_J$. The limit satisfies a similar twisted K\"ahler-Einstein equation
\begin{equation}\label{cscK limits}
\begin{split}
  Ric(\omega_{\infty}) &=-\omega_{\infty}+\omega_{WP}-2\pi \sum_{p \in \mathcal{P}} \left(\frac{d_p}{\mu_p}-1\right) [p].
\end{split}
\end{equation}

This paper is organized as follows:

In section 1, we provide a brief review of elliptic surfaces and discuss the structure of the cohomology group $H^{1,1}(X)\cap H^2(X,\mathbb{R})$.

In section 2, we construct a certain semi-flat cscK metric on $X^*$, where
$X^*$ is the space $X$ with finitely many fibers removed. We begin with the Siegel-Jacobi space $\mathbb{H}^+\times\mathbb{C}$ equipped with the K\"ahler metric\cite{BeRo82,Yang07}:
\begin{equation*}
    \eta=\sqrt{-1}\partial\bar{\partial}\left(-\log v^2+\frac{y^2}{v}\right), (u+iv,x+iy)\in \mathbb{H}^+\times\mathbb{C}
\end{equation*}
This metric descends to a semi-flat cscK metric on the universal elliptic surface which can be realized as the quotient:
\begin{equation*}
    \mathbb{H}^+\times_{Id} \mathbb{C}:=\mathbb{H}^+\times\mathbb{C}/
    (\zeta,z)\sim(\zeta,z+n\zeta+m), n,m\in \mathbb{Z}
\end{equation*}
We prove (see Theorem \ref{universal-rigidity}) that this metric is the unique $SL_2(\mathbb{Z})$-invariant semi-flat cscK metric on $\mathbb{H}^+\times_{Id}\mathbb{C}$ with fiber volume $1$ and scalar curvature $-3$. $X^*$ is covered by an elliptic surface denoted by $\mathbb{H}^+\times_{\tilde{\omega}}\mathbb{C}$, which is constructed as the quotient:
\begin{equation*}
    \mathbb{H}^+\times_{\tilde{\omega}}\mathbb{C}
=\mathbb{H}^+\times\mathbb{C}/(\zeta,z)\sim(\zeta,z+n\tilde{\omega}(\zeta)+m),
n,m\in \mathbb{Z}
\end{equation*}
A naturally defined morphism \eqref{homega} then pulls back $\eta$ to a well-defined $\pi_1(\Sigma_g^*)$-invariant semi-flat cscK metric on $X^*$.

In section 3, we  demonstrate the $H(X,\Phi,s)$-invariant of $\eta_X$, and establish its uniqueness under the additional condition that $X$ possesses at least one singular fiber $X_p$ of a type other than $I_b$ and $I_b^*$. The crucial step is to establish a correspondence: either by lifting an element  $(h',h)\in H(X,\Phi,s)$
to an automorphism of its cover $\mathbb{H}^+\times_{\tilde{\omega}}\mathbb{C}$,
or by projecting a specific automorphism of $\mathbb{H}^+\times_{\tilde{\omega}}\mathbb{C}$
to an element in $H(X,\Phi,s)$. Furthermore, proving the uniqueness requires a strengthened version of Theorem  \ref{universal-rigidity}, namely Lemma \ref{strongversion}, and the conditions of this lemma are met thanks to the singular fiber condition.

In section 4, we extend $\eta_X$  to the entire space $X$ as a closed positive $(1,1)$-current. We show that the local potential of $\eta_X$ around each singular fiber is locally $L^1$-integrable, except at finitely many points. By applying Theorem \ref{extensionfinite}, an extension theorem due to Harvey and Polking \cite{HaPo75}, we can further extend the current $\eta_X$ across these remaining finite points.

In section 5, we prove that the associated class of $\eta_X$ is algebraic and the analysis of local behavior of $\eta_X$ along singular fibers then allows us to precisely compute the trivial lattice component of $[\eta_X]$ as detailed in Proposition \ref{class}.
Subsequently, we consider a collapsing series of semi-flat cscK currents which enables us to obtain the class $D_X\in H^{1,1}(X)\cap H^2(X,\mathbb{Q})$ introduced in Theorem \ref{series}.

Finally, in Section 6, we propose the following conjecture:
\begin{conjecture}\label{conjecture}
Let $\Phi:X\rightarrow \Sigma_g$ be a minimal elliptic surface,
    there is a semi-flat cscK current on $X$  with regular fiber volume $1$ and
    scalar curvature $-3$, which is unique in the following sense: if $\omega_1,\omega_2$ are any two such semi-flat cscK currents, then there exists an element $(h',h)\in H(X,\Phi)$, such that $h'_*\omega_1=\omega_2$. Here, $H(X,\Phi)=\{(h',h)\in Aut(X)\times Aut(\Sigma_g)| \Phi\circ h'=h\circ \Phi\}$.
\end{conjecture}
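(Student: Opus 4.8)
The plan is to bootstrap from Theorem \ref{T1} in two stages: first remove the hypothesis that $X$ carries a holomorphic section (existence), then upgrade the conditional strict uniqueness of Theorem \ref{T1} to uniqueness-modulo-$H(X,\Phi)$ without the singular-fiber hypothesis.

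\textbf{Existence.} Given a minimal elliptic surface $\Phi:X\to\Sigma_g$ without a section --- equivalently, with multiple fibers $m_1F_1,\dots,m_kF_k$ --- I would pass to the Jacobian fibration $\Phi_B:B=J(X)\to\Sigma_g$, which is relatively minimal, carries a section, shares the functional invariant $J$ and (up to the logarithmic-transformation data) the homological invariant $\rho$ of $X$, and to which Theorem \ref{T1} applies, producing $\eta_B$. Over the open set $\Sigma_g^\star\subset\Sigma_g$ where the fibers of $X$ are smooth and reduced, $X$ is a torsor under $B|_{\Sigma_g^\star}$, classified by a class in $H^1(\Sigma_g^\star,\mathcal{B})$ (sheaf of holomorphic sections); locally on $\Sigma_g^\star$ the surfaces $X$ and $B$ are isomorphic over the base, and the transition maps are fiberwise translations. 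Because the local model $\eta=\sqrt{-1}\partial\bar\partial(-\log v^2+y^2/v)$ on $\mathbb{H}^+\times\mathbb{C}$ is invariant under the full Jacobi group --- in particular under the real translations $z\mapsto z+\lambda\zeta+\mu$, whose only effect on the potential is the addition of a pluriharmonic term --- the metric $\eta_B$ transports to a well-defined semi-flat cscK metric on $X^\star:=\Phi^{-1}(\Sigma_g^\star)$. It then remains to extend it across the bad fibers as a closed positive $(1,1)$-current: for the Kodaira-type fibers this is exactly the analysis of Section 4, and for a multiple fiber $mF$ with $F$ smooth one uses the standard description of $X$ near $\Phi^{-1}(p)$ as the quotient of an $m$-fold cyclic base cover of the corresponding (smooth) piece of $B$ by a free $\mathbb{Z}/m$-action combining a base rotation with a fiberwise translation; $\eta$ is $\mathbb{Z}/m$-invariant, so the metric descends, and the local $L^1$ bound together with the Harvey--Polking extension (Theorem \ref{extensionfinite}) yields a current $\eta_X$ on all of $X$, with fiber volume and scalar curvature inherited from $\eta_B$. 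Its $H(X,\Phi)$-invariance follows as in Section 3 once one checks that every element of $H(X,\Phi)$ lifts compatibly to the cover used there.

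\textbf{Uniqueness up to $H(X,\Phi)$.} Let $\omega_1,\omega_2$ be two semi-flat cscK currents on $X$ with regular fiber volume $1$ and scalar curvature $-3$. Restricting to $X^\star$ and lifting to $\mathbb{H}^+\times\mathbb{C}$, the fiberwise Ricci-flat and constant-scalar-curvature conditions, combined with the rigidity of the universal family (Theorem \ref{universal-rigidity}), force both to have the same ``shape'': each is the standard $\eta$ twisted by a section of the family playing the role of the zero section of the affine structure, so that the ambiguity between them is precisely a class $\tau\in H^0(\Sigma_g^\star,\mathcal{B})\otimes\mathbb{R}$ (its rational/holomorphic part measuring a Mordell--Weil translation, with possible real enlargements in isotrivial situations, and the extra fiberwise isometries for $I_b,I_b^\star$ that already defeated strict uniqueness in Theorem \ref{T1}). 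The crux is then to show: a twist $\tau$ admits a twisted current extending across \emph{every} singular and multiple fiber of $X$ as a closed positive $(1,1)$-current if and only if $\tau$ is realized by some $(h',h)\in H(X,\Phi)$ --- concretely, translation by a section together with the isotrivial/$I_b$-type automorphisms above. The ``if'' direction is the transport of $\eta_X$ by the automorphism; the ``only if'' direction should follow because a twisting section that does not extend --- acquiring poles or inadmissible monodromy along a bad fiber --- violates positivity or closedness of the extension, so the admissible $\tau$ are exactly those from $H(X,\Phi)$, which forces $h'_*\omega_1=\omega_2$ for a suitable $(h',h)$.

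\textbf{Main obstacle.} I expect the decisive difficulty to lie in the uniqueness step: pinning down exactly which twists $\tau$ extend to honest currents on $X$ and matching them one-to-one with automorphisms amounts to controlling $H^0$ of the N\'eron model of $B$ --- Mordell--Weil group, its torsion, and the real/rational enlargements in isotrivial cases --- against the monodromy $\rho$ and the local extension constraints at each Kodaira fiber, which is precisely the bookkeeping that, for fibers of type $I_b$ and $I_b^\star$, already obstructed strict uniqueness and now must instead be absorbed into $H(X,\Phi)$. A secondary obstacle is the multiple-fiber case of the existence step: checking that the metric transported from $B$ through the torsor gluing extends with the correct mass across each $\Phi^{-1}(p)$ for a multiple fiber, and that the action of $H(X,\Phi)$ (not merely $H(X,\Phi,s)$) lifts to the cover of Section 3. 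Both seem to be extensions of the paper's local analysis rather than new phenomena, but they are where the real work would be.
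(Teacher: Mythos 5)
You are attempting to prove Conjecture \ref{conjecture}, which the paper itself leaves open (``What we proved is a weaker version of the conjecture''); there is no proof in the paper to compare against, and what you have written is a program rather than a proof. Two of its steps contain genuine gaps, and they are precisely the points the paper flags as open.

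\textbf{Existence.} You transport $\eta_B$ from the Jacobian fibration through the torsor structure, asserting that the transition maps, being fiberwise translations, change the potential only by a pluriharmonic term. That is true only for translations $z\mapsto z+\lambda\tilde{\omega}(\zeta)+\mu$ with \emph{real constants} $\lambda,\mu$, where the potential $y^2/v$ picks up $2\lambda y+\lambda^2 v$. For a general local holomorphic section $c(\zeta)$ the cross term $2y\,\mathrm{Im}\,c(\zeta)/v$ is pluriharmonic only when $c=\lambda\tilde{\omega}+\mu$ with $\lambda,\mu\in\mathbb{R}$; so the semi-flat metric is \emph{not} invariant under arbitrary fiberwise translations, and the local pieces of the transported metric need not glue. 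One must first reduce the torsor cocycle (valued in the sheaf of germs of holomorphic sections of $B$) to one valued in real combinations of the periods. This works when the twisting class is torsion --- which covers the $\mathbb{Z}/m$-translations by $m$-torsion points at multiple fibers --- but it is a missing step, and for non-algebraic $X$ it can fail outright.

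\textbf{Uniqueness.} Your reduction of the ambiguity between $\omega_1$ and $\omega_2$ to a twisting section $\tau$ rests on the rigidity of Theorem \ref{universal-rigidity} and Lemma \ref{strongversion}, which require the monodromy to contain an elliptic element (conjugate to $\begin{pmatrix}0&1\\-1&0\end{pmatrix}$, $\pm\begin{pmatrix}1&1\\-1&0\end{pmatrix}$, or $\pm\begin{pmatrix}0&-1\\1&1\end{pmatrix}$). When every singular fiber is of type $I_b$ or $I_b^*$ the image of $\rho$ is unipotent up to sign, the rigidity argument collapses, and the horizontal coefficient $f$ is constrained only by $\frac{d^2}{dv^2}\log f=3v^{-2}(f-1)$ together with periodicity, which admits non-constant positive solutions. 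The resulting metrics differ from $\eta_X$ by more than a fiberwise translation, so they cannot be absorbed into $H(X,\Phi)$, whose effect on semi-flat cscK currents is essentially by Mordell--Weil translations and finite symmetries. Whether the extension constraints at the singular fibers rule these solutions out is exactly the paper's open Question 1; your ``only if'' direction (``a twisting section that does not extend violates positivity or closedness'') is asserted, not argued, and is where the actual content of the conjecture lies.
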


The main motivation to the paper is a version of geometrical uniformization for elliptic surface. As a corollary to this conjecture, for any given elliptic surface with a section $(X,\Phi,s)$ there associates a unique semi-flat cscK up to $H(X,\Phi)$ action; moreover, there is a class $D_X$ as in Theorem \ref{T2} associated to the metric which is also unique up to a pull back by $T_{\sigma}$ for some $\sigma\in MW(X/\Sigma_g)$,
where $MW(X/\Sigma_g)$ denotes the Mordell-Weil group of the elliptic surface $\Phi:X\rightarrow \Sigma_g$ consisting of all holomorphic sections and $T_{\sigma}\in Aut(X)$ is the automorphism of $X$ corresponding to translation along fibers by $\sigma\in MW(X/\Sigma_g)$. What we proved is a weaker version of the conjecture. 

\begin{remark}
 We are grateful to Professor Valentino Tosatti for a valuable personal communication. He kindly informed us of related work by his student Gregory Edwards (in 2017, concerning special cases of these metrics locally near singular fibers of type $I_b$) and offered the valuable clarification that while the extendability to a positive (1,1)-current is central in our semi-flat cscK case, this property is not universally true for all semi-flat form.\cite{CaoGuenanciaPaun23}.
\end{remark}


\section{A brief review of elliptic surface}
\subsection{Non-singular locus}

This section offers a brief review of elliptic surfaces, more details see Kodaira's seminal work \cite{Ko63}.

Let $U$ be a Riemann surface, $\mathbb{H}^+$ be the upper half-plane, and $\phi:U\rightarrow \mathbb{H}^+$ a non-constant holomorphic map. we define the fiber product $U\times_{\phi}\mathbb{C}$ as the quotient of $U\times\mathbb{C}$ by the equivalence relation: $(\zeta,z)\sim(\zeta,z+m\phi(\zeta)+n),m,n\in \mathbb{Z}$, This construction yields an elliptic fibration over $U$:
\begin{equation*}
\begin{split}
    U\times_{\phi}\mathbb{C} &\rightarrow U\\
    [\zeta,z] &\rightarrow \zeta
\end{split}
\end{equation*}

The elliptic fibration $\mathbb{H}^+\times_{Id}\mathbb{C}$
is known as the universal elliptic fibration. It is related to any other elliptic fibration of the form
$U\times_{Id}\mathbb{C}$ via a natural morphism $h_{\phi}$, which is defined as below:
\begin{equation}\label{homega}
    \begin{split}    h_{\phi}:U\times_{\phi}\mathbb{C}&\rightarrow \mathbb{H}^+\times_{Id}\mathbb{C}\\
        [\zeta,z]&\rightarrow [\phi(\zeta),z]
    \end{split}
\end{equation}
Let the group $\mathcal{G}$ consists of pairs $g(A,\textbf{n}),A\in SL_2(\mathbb{Z}),\textbf{n}\in \mathbb{Z}^2$ and
\begin{equation*}
\begin{split}
      g(A,\textbf{n})\cdot g(B,\textbf{m}):&=g(AB,\textbf{n}B+m)\\
    g(A,\textbf{n}):&=g(A^{-1},-\textbf{n}A^{-1})
\end{split}
\end{equation*}
let $\mathcal{N}\triangleleft \mathcal{G}$ be the normal subgroup consists of element $g(\textbf{1},\textbf{n}),\textbf{n}\in \mathbb{Z}^2$, then there is a $\mathcal{G}$-action on
$\mathbb{H}^+\times \mathbb{C}$:
\begin{equation*}
\begin{split}
\mathcal{G}\times \mathbb{H}^+\times\mathbb{C}&\rightarrow \mathbb{H}^+\times \mathbb{C}\\
    g(A,\textbf{n})\times (\zeta,z) &\rightarrow \left(\frac{a\zeta+b}{c\zeta+d},\frac{z+n_1\zeta+n_2}{c\zeta+d}
    \right)\\
    where\ A=\begin{pmatrix}
        a & b\\
        c & d
    \end{pmatrix},\ &\textbf{n}=(n_1,n_2)
\end{split}
\end{equation*}
then $\mathbb{H}^+\times_{Id}\mathbb{C}=(\mathbb{H}^+\times \mathbb{C})/\mathcal{N}$, and $\mathcal{G}/\mathcal{N}\cong Sl_2(\mathbb{Z})$, henceforth, we refer to the $SL_2(\mathbb{Z})$-action on $\mathbb{H}^+\times_{Id}\mathbb{C}$ be the induced $\mathcal{G}/\mathcal{N}$ action on $\mathbb{H}^+\times_{Id}\mathbb{C}$.

More general, let $\Phi:X\rightarrow \Sigma_g$ be an elliptic fibration based on a compact Riemann surface with genus $g$, such that $\Phi$ posses a globally defined holomorphic section $s:\Sigma_g\rightarrow X$, there is a finite set $S\subset \Sigma_g$, and a multi-valued holomorphic map $\omega$ form $\Sigma_g-S$ to the upper half plane $\mathbb{H}^+$, such that $X_p:=
\Phi^{-1}(p)$ biholomorphic to the torus with period $1$ and $\omega(p)$,  let $j$ be the elliptic modular function with $j(e^{2\pi \sqrt{-1}/3})=0,j(\sqrt{-1})=1,j(\infty)=\infty$, then $J=j\circ\omega$
is a well-defined non-constant meremorphic function on $\Sigma_g$ whcih is called the Jacobian of the elliptic fibration.
Take a finite set $\mathcal{S}=\{0,1,\infty,s_1,...,s_m\}\subset \mathbb{P}^1$ such that
$mult_pJ=1,\forall J(p)\notin \mathcal{S}$ and $X_p$ is regular $\forall p\notin \mathcal{S}$,
let $\mathcal{P}:=J^{-1}(\mathcal{S})$, $\Sigma^*_g:=\Sigma_g-\mathcal{P},  \mathbb{H}^+_*:=\mathbb{H}^+-j^{-1}(\mathcal{S}), X^*:=\Phi^{-1}(\Sigma_g^*)$. Since the Euler nnumber of $\Sigma_g^*$ is negative, the universal covering of $\Sigma_g^*$ must be the upper half plane $\mathbb{H}^+$, let $\pi:\mathbb{H}^+\rightarrow \Sigma_g^*$ be the universal covering of $\Sigma_g^*$.  Moreover, we can treat $\omega$ as an unbranched $d-sheeted$ covering : $ \omega:\Sigma_g^*\rightarrow H_*^+/PSL_2(\mathbb{Z})$, and can be lifted to a holomorphic map $\tilde{\omega}:\mathbb{H}^+\rightarrow \mathbb{H}^+_*$:
\begin{equation*}
    \xymatrix{
    \mathbb{H}^+\ar@{-->}^{\tilde{\omega}}[rr]\ar_{\pi}[dd] &\   & H_*^+\ar^{P}[dd]\\
    \ &\ &\ \\
    \Sigma_g^*\ar^{\omega}[rr] &\ & H_*^+/PSL_2(\mathbb{Z})
    }
\end{equation*}
where $P$ is the nature projection.

The Jacobian $J$ mentioned above is called the function invariant of $X$, another remarkable invariant is called the homological invariant belongs to $J$, or elliptic monodromy, which can be understood as a
$\pi_1(\Sigma_g^*)$-representation $\rho:\pi_1(o,\Sigma_g^*)\rightarrow SL_2(\mathbb{Z})$ defined as following:

(1) Step 1:choose small loops $\gamma_p(t),0\le t \le 1$ based on $o\in \Sigma_g^*$ such that $\{[\gamma_p], p\in \mathcal{P}\}$ form a basis of $\pi_1(o,\Sigma_g^*)$, let $e_p^1(t),e_p^2(t)$ be continuously varing bases of $H_1(X_{\gamma_p(t)},,\mathbb{Z})$, then:
\begin{equation*}
    (e_p^1(1),e_p^2(1))=(e_p^1(0),e_p^2(0))\cdot
    A_p,A_p=\begin{pmatrix}
        a_p & b_p \\
        c_p & d_p
    \end{pmatrix}\in SL_2(\mathbb{Z})
\end{equation*}

(2) Step 2: The representation $\rho$ is generated by
$[\gamma_p]\rightarrow A_p,p\in \mathcal{P}$.

Note that the representation $\rho$ defines a locally constant sheaf $G'$ over $\Sigma_g^*$ whose stalks are isomorphic to $\mathbb{Z}\oplus \mathbb{Z}$, and $G'$
can be extended to the sheaf $G=\bigcup\limits_{p\in\mathcal{P}} G_{p}\cup G'$ over $\Sigma_g$, where
$G_p:=\Gamma(G'|B^*_{\delta}(p))$, We say the sheaf $G$ belongs to the meromorphic function $J$. Given a meremorphic function $J:\Sigma_g\rightarrow \mathbb{P}^1$, and a sheaf $G$ over $\Sigma_g$ belongs to $J$, there is a unique compact elliptic surface $\Phi:X\rightarrow \Sigma_g$ with globally defined holomorphic section $s$, which posses the functional invariant $J$ and sheaf $G$
over$\Sigma_g$ belongs to $J$, note that the existence of the section $s$ implies
that the elliptic surface free from multiple fibers.

The element $[\gamma_p]\in \Sigma_g^*$ can also be treated as a deck transform of the covering
$\pi$, we have the following commutative diagram:
\begin{equation*}
    \xymatrix{
    \mathbb{H}^+\ar^{\tilde{\omega}}[r]\ar_{\gamma_p}[d] & H_*^+\ar^{\rho([\gamma_p])}[d]\\
    \mathbb{H}^+\ar^{\tilde{\omega}}[r] & H_*^+
    }
\end{equation*}
which means for $[\gamma_p]\in \pi_1(\Sigma_g^*),\rho([\gamma_p])=
\begin{pmatrix}
    a & b\\
    c & d
\end{pmatrix}$,
$\tilde{\omega}(\gamma_p\cdot \zeta)=\frac{a\zeta+b}{c\zeta+d}$.
Let $\mathcal{G}_X$ be the group containing pairs $g_X([\gamma],\textbf{n}),[\gamma]\in \pi_1(\Sigma_g^*),\textbf{n}=(n_1,n_2)\in \mathbb{Z}^2$.
\begin{equation*}
\begin{split}
       g_X([\gamma_1],\textbf{n})\cdot g_X([\gamma_2],\textbf{m}):&=
       g_X([\gamma_1]\cdot [\gamma_2],\textbf{n}\cdot \rho([\gamma_2])+\textbf{m}) \\
       g_X([\gamma],\textbf{n})^{-1}:&=g_X([\gamma]^{-1},-\textbf{n}\cdot \rho([\gamma])^{-1})
\end{split}
\end{equation*}
The product space $\mathbb{H}^+\times \mathbb{C}$ possese a $\mathcal{G}_X$-action defined as:
\begin{equation*}
    \begin{split}
        \mathcal{G}_X\times \mathbb{H}^+\times\mathbb{C} &\rightarrow \mathbb{H}^+\times\mathbb{C}\\
        g_X([\gamma],\textbf{n})\times (\zeta,z) &\rightarrow \left(
        \gamma\cdot \zeta,\frac{z+n_1\tilde{\omega}(\zeta)+n_2}{c\tilde{\omega}(\zeta)+d}
        \right)
    \end{split}
\end{equation*}
Let $\mathcal{N_X}\triangleleft \mathcal{G}_X$ be the normal subgroup of $\mathcal{G}_X$ consists elements of
$\{g(\textbf{1},\textbf{n})\}$, then $\mathbb{H}^+\times_{\tilde{\omega}}\mathbb{C}\cong
(\mathbb{H}^+\times \mathbb{C})/\mathcal{N}_X$ and $\mathcal{G}_X/\mathcal{N}_X\cong
\pi_1(o,\Sigma_g^*)$ we will call the $\pi_1(o,\Sigma_g^*)$-action on $\mathbb{H}^+\times_{\tilde{\omega}} \mathbb{C}$ be the induced $\mathcal{G}_X/\mathcal{N}_X
$-action on $\mathbb{H}^+\times_{\tilde{\omega}}\mathbb{C}$, then $X^*\cong(\mathbb{H}^+\times \mathbb{C})/\mathcal{G}_X\cong (\mathbb{H}^+\times_{\tilde{\omega}}\mathbb{C})/\pi_1(o,\Sigma_g^*)$, . We end this subsection with following  commutative diagram:
\begin{equation}\label{R.D}
    \begin{split}
        \xymatrix{
        \mathbb{H}^+\times_{\tilde{\omega}}\mathbb{C}\ar^{h_{\tilde{\omega}}}[r]\ar_{[\gamma]}[d] & \mathbb{H}^+\times_{Id}\mathbb{C}\ar^{\rho([\gamma])}[d]\\
        \mathbb{H}^+\times_{\tilde{\omega}}\mathbb{C}\ar^{h_{\tilde{\omega}}}[r] & \mathbb{H}^+\times_{Id}\mathbb{C}
        }
    \end{split}
\end{equation}

\subsection{Local structure at singular fibers}
Let $p\in \mathcal{P}$, let $d_p:=mult_pJ$, and let $\mu_p$ be the order of group $G_p=\{A\in PSL_2(\mathbb{Z})|A\cdot\omega(p)=\omega(p)\}$, where $\omega$ is the multi-valued period function on $\Sigma_g^*$.
\begin{equation*}
\begin{split}
 \mu_p=\left\{
\begin{aligned}
1, & \ J(p)\ne 0,1,\infty\\
3, & \ J(p)=0\\
2, & \ J(p)=1\\
\infty, &\ J(p)=\infty
\end{aligned}
\right.
 \end{split}
\end{equation*}
\subsubsection{$J(p)\ne \infty$}
 $h_p:=ord\ \rho([\gamma_p])$ is finite,
we can choice a suitable coordinate $(B_{\delta}(0),\sigma),\sigma(p)=0$ of
$p$ such that
\begin{equation}\label{N-L-M}
    \begin{split}
        \Phi^{-1}(B_{\delta}^*(0))&\cong (B_{\delta^{1/h_p}}^*(0)\times_{\omega_p}\mathbb{C})/\mathbb{Z}_{h_p}\\
        \sigma&=\Phi([\tau,z])=\tau^{h_p}
    \end{split}
\end{equation}
let $e_n:=e^{2\pi \sqrt{-1}/n},\eta=e^{2\pi \sqrt{-1}/3}$ let $g_p$ be the generator $Z_{h_p}$
We have following table in case $J(p)\ne \infty$
\begin{equation*}
 \begin{tabular}{c|c|c|c|c|c|c}
 $X_p$ & $J(p)$ & $h_p$ & $d_p$ & $\rho([\gamma_p])$ & $\omega_p(\tau)$ & $g_p([\tau,z])$\\ \hline

 Regular & $\ne \infty$ & 1 & $\equiv 0\mod \mu_p$ & $\textbf{1}$ & $\tau^{d_p/\mu_p}+\omega_0$  & $[\tau,z]$\\

 $\mathrm{I_0}^*$   & $\ne \infty$ & $2$ & $\equiv 0\mod \mu_p$ & $-\textbf{1}$ & $\tau^{2d_p/\mu_p}+\omega_0$ & $[-\tau,-z]$ \\

 $\mathrm{II}$ & $0$ & $6$ & $\equiv 1\mod 3$ &$\setlength{\arraycolsep}{1.5pt} \begin{pmatrix}
     1 & 1\\
     -1 & 0
 \end{pmatrix}$ & $\frac{\eta-\eta^2\tau^{2d_p}}{1-\tau^{2d_p}}$ &$\left[
 e_6\tau,-\frac{z}{\omega_p(\tau)}
 \right]$\\

 $\mathrm{II^*}$ & $0$ & $6$ & $\equiv 2\mod 3$ &$\setlength{\arraycolsep}{1.5pt} \begin{pmatrix}
     0 & -1 \\
     1 & 1
 \end{pmatrix}$ & $\frac{\eta-\eta^2\tau^{2d_p}}{1-\tau^{2d_p}}$ & $\left[
 e_6\tau,\frac{z}{\omega_p(\tau)+1} \right]$\\

$\mathrm{IV^*}$ & $0$ & $3$ &$\equiv 1\mod 3$ & $\setlength{\arraycolsep}{1.5pt} \begin{pmatrix}
    -1 & -1\\
    1 & 0
\end{pmatrix}$ & $\frac{\eta-\eta^2\tau^{d_p}}{1-\tau^{d_p}}$ &$\left[
 e_3\tau,\frac{z}{\omega_p(\tau)} \right]$\\

 $\mathrm{IV}$ & $0$ & $3$ & $\equiv 2\mod 3$ & $\setlength{\arraycolsep}{1.5pt} \begin{pmatrix}
     0 & 1\\
     -1 & -1
 \end{pmatrix}$ & $\frac{\eta-\eta^2\tau^{d_p}}{1-\tau^{d_p}}$ &$\left[
 e_3\tau,-\frac{z}{\omega_p(\tau)+1} \right]$\\

$\mathrm{III}$ & $1$ & $4$ & $\equiv 1\mod  2$ &$\setlength{\arraycolsep}{1.5pt} \begin{pmatrix}
    0 & 1 \\
    -1 & 0
\end{pmatrix}$ & $\frac{i+i\tau^{2d_p}}{1-\tau^{2d_p}}$ &  $\left[e_4\tau,-\frac{z}{\omega(\tau)}\right]$ \\

$\mathrm{III^*}$ & $1$ & $4$ & $\equiv 1\mod 2$ & $\setlength{\arraycolsep}{1.5pt} \begin{pmatrix}
    0 & -1\\
    1 & 0
\end{pmatrix}$ & $\frac{i+i\tau^{2d_p}}{1-\tau^{2d_p}}$ &
$\left[e_4\tau,\frac{z}{\omega(\tau)}\right]$ \\
\end{tabular}
\end{equation*}
where
\begin{equation*}
  \mu_p=\left\{
\begin{aligned}
1, & \ J(p)\ne 0,1,\infty\\
3, & \ J(p)=0\\
2, & \ J(p)=1
\end{aligned}
\right.
\end{equation*}

\subsubsection{$J(p) =\infty$}
Assume $p$ is a pole of $J$ of order $d_p$:

\textbf{Case 1} $\rho([\gamma_p])=\begin{pmatrix}
    1 & b\\
    0 & 1
\end{pmatrix}$, $X_p$ is of type $\mathrm{I}_b$:
We can choice a suitable coordinate such that
\begin{equation*}
    \begin{split}
        \Phi^{-1}(B_{\delta}^*(p))=(\mathbb{H}^+_{\delta}\times \mathbb{C})/G_p
    \end{split}
\end{equation*}
where $\mathbb{H}^+_{\delta}:=\{Im\zeta>-\frac{1}{2\pi}\log \delta\}\subset \mathbb{H}^+$, $\omega_(\zeta)=d_p\zeta$ and $G_p$ consists pairs:
\begin{equation*}
    \begin{split}
        g(k,n_1,n_2): (\zeta,z)\rightarrow (\zeta+k,z+n_1d_p\zeta+n_2),
        k,n_1,n_2\in\mathbb{Z}
    \end{split}
\end{equation*}

\textbf{Case 2} $\rho([\gamma_p])=\begin{pmatrix}
    -1 & -b\\
    0 & -1
\end{pmatrix}$, $X_p$ is of type $\mathrm{I}_b^*$:
We can choice a suitable coordinate such that
\begin{equation*}
    \begin{split}
        \Phi^{-1}(B_{\delta}^*(p))=(\mathbb{H}^+_{\delta}\times \mathbb{C})/G_p
    \end{split}
\end{equation*}
$\omega_(\zeta)=d_p\zeta$ and $G_p$ consists pairs:
\begin{equation*}
    \begin{split}
        g(k,n_1,n_2): (\zeta,z)\rightarrow (\zeta+k,(-1)^k(z+n_1d_p\zeta+n_2)),
        k,n_1,n_2\in\mathbb{Z}
    \end{split}
\end{equation*}

\subsection{$H^{1,1}(X)\cap H^2(X,\mathbb{R})$}
This subsection is a essential review of the lattice structure of $H^{1,1}(X)\cap H^2(X,\mathbb{R})$ and one can find more details in \cite{Shioda90}.
$H^{1,1}_{\mathbb{R}}(X):=H^{1,1}(X)\cap H^2(X,\mathbb{R})$
can be decomposed with parts:
\begin{equation}\label{decomposition of H_2}
    H^{1,1}_{\mathbb{R}}=(NS(X)\otimes \mathbb{R})\oplus(NS(X)\otimes \mathbb{Q})^{\perp}
\end{equation}
where $NS(X)$ is the N\'eron-Severi group, by the Lefschetz theorem on $(1,1)$-classes,
\begin{equation*}
    H^{1,1}(X)\cap H^2(X,\mathbb{Z})\cong NS(X)
\end{equation*}
Via Poincaré duality, the sub-lattice  $NS(X)\otimes \mathbb{R}$ corresponds to the real vector space spanned by the homology classes of algebraic divisors, and the sub-lattice
$(NS(X)\otimes \mathbb{Q})^{\perp}:=\{\alpha\in   H^{1,1}_{\mathbb{R}}| \alpha\cdot \beta=0, \forall \beta\in NS(X)\otimes \mathbb{Q}\}$

\begin{equation*}
    NS(X)=T(X)\oplus T(X)^{\perp}
\end{equation*}
where $T(X)$ called the trivial sublattice of $NS(X)$, is generated by:

(1) zero section $[s]$.

(2) fiber class $[F]$.

(3) The classes $\{[\Theta_{pi}] \}_{i=1}^{m_p-1}$  which arise from the reducible singular fiber $X_p=\Theta_{p0}+\sum\limits_{i=1}^{m_p-1}n_{pi}\Theta_{pi}$ ($\Theta_{i0}$ is the component meet zero section $s$).

$T^{\perp}$ is called the essential sublattice of $NS(X)$ consists of all elements of $NS(X)$ that have zero intersection with every element of $T$.
For each section $s_i\in MW(X/\Sigma_g)$, there is a unique element $\psi(s_i)\in T^{\perp}$ such that $[s_i]\equiv \psi(s_i)\mod T$, and
$\psi:MW(X)\rightarrow T^{\perp}$ called Shioda homomorphism induces
an isomorphism:
\begin{equation*}
\begin{split}
     \psi:MW(X)\otimes \mathbb{Q}&\xrightarrow{\cong} T^{\perp}\otimes \mathbb{Q}\\
     s_i&\rightarrow [s_i]-[s]-([s_i]\cdot [s]+\chi)[F]
\end{split}
\end{equation*}

let $MW^0(X/\Sigma_g)=\{\sigma\in MW(X/\Sigma_g)| \sigma\ meets\ \Theta_{p0},\forall p\}$, then we
can choice $s_1,...,s_r\in MW^0(X/\Sigma_g)$ as a $\mathbb{Q}$-basis of $MW(X/\Sigma_g)\otimes \mathbb{Q}$, the following is a
basis of $NS(X)\otimes \mathbb{Q}$:
\begin{equation*}
    [s],[F],\{[\Theta_{pi}]\}_{i=1}^{m_p-1},\{\psi(s_i)\}_{i=1}^r
\end{equation*}
and the intersection matrix with this basis is
\begin{equation*}
    M_a=
\begin{pmatrix}
-\chi & 1      &        &        \\
1     & 0      &        &         \\
      &        & \bigoplus_p R_p  \\
      &        &          & E
\end{pmatrix}
\end{equation*}
where $\chi=h^0(X,\mathcal{O}_X)-h^1(X,\mathcal{O}_X)+h^2(X,\mathcal{O}_X)$ is the Euler Poincar\'e character of $X$, and $R_p=(\Theta_{pi}\cdot \Theta_{pj})_{(m_{p}-1)\times (m_{p}-1)}$, $E=(\psi(s_i)\cdot \psi(s_j))_{r\times r}$. Indeed, we have following explicit computation for $E$(\cite{Shioda90}):
\begin{theorem}[Shioda, 1990]
    \begin{equation*}
    \begin{split}
        -\psi(s_i)\cdot \psi(s_j)&=\chi+s_i\cdot s+s_j\cdot s-s_i\cdot s_j\\
        -\psi(s_i)\cdot \psi(s_j)&=2\chi+2s_i\cdot s
    \end{split}
    \end{equation*}
\end{theorem}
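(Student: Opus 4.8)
The plan is to prove the two intersection formulas by realizing $\psi(s_i)$ explicitly through the formula $\psi(s_i)=[s_i]-[s]-([s_i]\cdot[s]+\chi)[F]$ and then bilinearly expanding the pairing, using only the elementary intersection numbers on an elliptic surface with section. First I would record the four basic relations on $(X,\Phi,s)$: for any section $\sigma$ of a minimal elliptic surface, $\sigma^2=-\chi$ (this is the adjunction computation on a genus-$0$ curve on $X$, together with $K_X\cdot\sigma=2g-2+\chi$-type relations which for sections reduce to $\sigma^2=-\chi$ via the canonical bundle formula $K_X=\Phi^*(K_{\Sigma_g}+\mathcal{L})$ with $\deg\mathcal{L}=\chi$); moreover $[F]^2=0$, $[F]\cdot\sigma=1$ for every section $\sigma$, and $[F]\cdot[\Theta_{pi}]=0$. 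These are exactly the facts encoded in the stated matrix $M_a$, so I may simply cite them.

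Next I would substitute. For $i\ne j$,
\begin{align*}
-\psi(s_i)\cdot\psi(s_j)
&=-\bigl([s_i]-[s]-([s_i]\cdot[s]+\chi)[F]\bigr)\cdot\bigl([s_j]-[s]-([s_j]\cdot[s]+\chi)[F]\bigr).
\end{align*}
Expanding the product and using $[F]^2=0$, $[F]\cdot[s_k]=[F]\cdot[s]=1$, $[s]^2=[s_i]^2=[s_j]^2=-\chi$, all the terms involving the coefficients $([s_k]\cdot[s]+\chi)$ collapse: the cross terms $-([s_j]\cdot[s]+\chi)[s_i]\cdot[F]$, $-([s_i]\cdot[s]+\chi)[s_j]\cdot[F]$, $+([s_i]\cdot[s]+\chi)[s]\cdot[F]$, $+([s_j]\cdot[s]+\chi)[s]\cdot[F]$, and $+2([s_i]\cdot[s]+\chi)([s_j]\cdot[s]+\chi)[F]^2$ contribute, and after collecting one is left with $-[s_i]\cdot[s_j]+[s_i]\cdot[s]+[s_j]\cdot[s]-[s]^2 = -[s_i]\cdot[s_j]+s_i\cdot s+s_j\cdot s+\chi$, which is the first identity. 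The second identity is the specialization $i=j$: then $[s_i]\cdot[s_j]=[s_i]^2=-\chi$, so the right side becomes $\chi+2\,s_i\cdot s-(-\chi)=2\chi+2\,s_i\cdot s$. I would present this as a short direct computation, being careful to track the coefficient of $[F]$, since that is the only place signs can go wrong.

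The one genuine input that is not pure bookkeeping is $\sigma^2=-\chi$ for an arbitrary section $\sigma$, together with $[F]\cdot\sigma=1$; the latter is immediate (a section meets each fiber once) and the former follows from adjunction $\sigma^2 + K_X\cdot\sigma = 2g(\sigma)-2 = 2g-2$ combined with $K_X\cdot\sigma = (\Phi^*(K_{\Sigma_g}+\mathcal{L}))\cdot\sigma = \deg(K_{\Sigma_g}+\mathcal{L}) = 2g-2+\chi$. So the only potential obstacle is making sure the normalization of $\psi$ used here is consistent with the one in the statement — i.e. that the displayed formula for $\psi(s_i)$ really does land in $T^\perp$, which amounts to checking $\psi(s_i)\cdot[s]=0$, $\psi(s_i)\cdot[F]=0$, and $\psi(s_i)\cdot[\Theta_{pj}]=0$ for $s_i\in MW^0$. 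The first two are one-line computations from the relations above; the third uses $[\Theta_{pj}]\cdot[F]=0$ and $[\Theta_{pj}]\cdot[s_i]=[\Theta_{pj}]\cdot[s]=\delta_{j0}$ because $s_i,s\in MW^0$ both meet the identity component $\Theta_{p0}$. Once this consistency check is in place the two formulas follow with no further effort; I expect the verification that $\psi(s_i)\in T^\perp$ (rather than the final expansion) to be the part most worth writing out carefully.
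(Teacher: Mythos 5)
Your computation is correct and complete. Note that the paper itself offers no proof of this statement -- it is quoted from Shioda's 1990 paper -- so there is nothing internal to compare against; your direct expansion of $\psi(s_i)\cdot\psi(s_j)$ from the explicit formula $\psi(s_i)=[s_i]-[s]-([s_i]\cdot[s]+\chi)[F]$ is in fact the standard argument, and it is the right one here. You correctly isolate the only nontrivial inputs ($\sigma^2=-\chi$ via adjunction plus the canonical bundle formula, and $[F]\cdot\sigma=1$), you correctly read the second displayed identity as the diagonal case $i=j$, and you rightly emphasize that the formula for $\psi(s_i)$ only lands in $T^\perp$ because the paper has chosen $s_1,\dots,s_r\in MW^0$, so that $[s_i]\cdot[\Theta_{pj}]=0$ for $j\ge 1$; for general sections Shioda's height formula acquires local correction terms at reducible fibers, and the stated identities would fail. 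Two cosmetic slips to fix in a final write-up: the parenthetical describing $\sigma^2=-\chi$ as ``the adjunction computation on a genus-$0$ curve'' is wrong (a section is isomorphic to $\Sigma_g$, and your later adjunction line $\sigma^2+K_X\cdot\sigma=2g-2$ is the correct one), and the cross term in $[F]^2$ carries coefficient $a_ia_j$, not $2a_ia_j$ -- harmless since $[F]^2=0$, but worth tidying.
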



\section{Construction of semi-flat cscK metric on $X^*$}
The main goal of this section is to construct a semi-flat K\"ahler metric  with constant scalar curvature (semi-flat cscK) on $\mathbb{H}^+\times_{Id}\mathbb{C}$ and $X^*$.

We introduce the coordinates $(\zeta,z)=(u+\sqrt{-1}v,x+\sqrt{-1}y)\in \mathbb{H}^+\times\mathbb{C}$, the desired metric on $\mathbb{H}^+\times_{Id}\mathbb{C}$, denoted by $\eta$, can be expressed in these coordinates as:
\begin{equation}\label{eta}
    \begin{split}
 \eta&=\frac{\sqrt{-1}}{2}
 (
 A d\xi\wedge d\bar{\xi}
 +B d\xi\wedge d\bar{z}+
 \bar{B}dz\wedge d\bar{\xi}
 +Cdz\wedge d\bar{z}
 )
 \end{split}
\end{equation}
where $A,B,C$ are functions of the coordinates, to ensure fiber volume $1$, we require $C=1/v$, the K\"ahler condition reads:
\begin{equation}
    \begin{split}
         \frac{\partial A}{\partial z}&=
 \frac{\partial\bar{B}}{\partial\zeta}\\
 \frac{\partial C}{\partial\zeta}&=
 \frac{\partial B}{\partial z}
    \end{split}
\end{equation}
The metric is required to be $\mathbb{Z}^2$-invariant:
\begin{equation*}
        g(\textbf{1},n,m)^*\eta=\eta,\forall n,m\in \mathbb{Z}\Leftrightarrow
    \end{equation*}
\begin{equation}\label{a}
         A(\zeta,z+1)=A(\zeta,z)
\end{equation}
\begin{equation}\label{b}
        B(\zeta,z+1)=B(\zeta,z)
\end{equation}
\begin{equation}\label{c}
        A(\zeta,z+\zeta)+B(\zeta,z+\zeta)+
 \bar{B}(\zeta,z+\zeta)+C(\zeta)=A(\zeta,z)
\end{equation}
\begin{equation}\label{d}
        B(\zeta,z+\zeta)+C(\zeta)=B(\zeta,z)
\end{equation}

Since $\frac{\partial^2\bar{B}}{\partial z\partial \bar{z}}=
\frac{\partial}{\partial z}
\frac{\partial C}{\partial\bar{\zeta}}=0$ so
$\frac{\partial \bar{B}}{\partial z}$ is holomorphic on $\Phi^{-1}(\zeta)$, combining with \ref{b} and \ref{d}, we see that $\frac{\partial \bar{B}}{\partial z}(\zeta,z)$ is double-period in $z$,
thus, $B(\zeta,z)$ reduce a function of $\zeta$, denoted by $Q(\zeta)$. Therefore:
\begin{equation}\label{e}
 \begin{split}
 B(\zeta,z)&=\beta(\zeta)+\frac{\partial C}{\partial \zeta}z+\bar{Q}\bar{z}\\
 \beta(\zeta):&=B(\zeta,0)
 \end{split}
\end{equation}
again by \ref{b} and \ref{e},we have:
\begin{equation}
    \frac{\partial C}{\partial\zeta}+\bar{Q}=0
\end{equation}
Therefore
\begin{equation}
 \begin{split}
 B(\zeta,z)&=\beta(\zeta)+
 \frac{\partial C}{\partial\zeta}z-\frac{\partial C}{\partial\zeta}\bar{z}\\
 &=\beta(\zeta)-\frac{y}{v^2}
 \end{split}
\end{equation}
Now,we compute $A$:
\begin{equation}
 \begin{split}
 \frac{\partial A}{\partial z}&=
 \frac{\partial\bar{B}}{\partial\zeta}
 =\frac{\partial\bar{\beta}}{\partial\zeta}
 -\frac{\sqrt{-1} y}{v^3}\\
 \frac{\partial A}{\partial \bar{z}}&=
 \frac{\partial B}{\partial\zeta}
 =\frac{\partial\beta}{\partial\bar{\zeta}}
 +\frac{\sqrt{-1}y}{v^3}\\
 \frac{\partial^2 A}{\partial z^2}&=
 -\frac{1}{2v^3}
=\frac{\partial^2A}{\partial \bar{z}^2}\\
\frac{\partial^2A}{\partial z\partial\bar{z}}
&=\frac{1}{2v^3}
 \end{split}
\end{equation}
Therefore,we have
\begin{equation}
 \begin{split}
 A(\zeta,z)=\alpha(\zeta)+
 \frac{\partial\bar{\beta}}{\partial\zeta} z
 +\frac{\partial\beta}{\partial\bar{\zeta}}\bar{z}
 -\frac{1}{4v^3}(z-\bar{z})^2\\
 =\alpha(\zeta)+
 \frac{\partial\bar{\beta}}{\partial\zeta} z
 +\frac{\partial\beta}{\partial\bar{\zeta}}\bar{z}
 +\frac{ y^2}{v^3}
 \end{split}
\end{equation}
By \ref{a}, we have
\begin{equation}
 \begin{split}
 \frac{\partial\bar{\beta}}{\partial\zeta}
 +\frac{\partial\beta}{\partial\bar{\zeta}}
 =0
 \end{split}
\end{equation}
By \ref{c}, we have:
\begin{equation}
 \begin{split}
 \frac{\partial \bar{\beta}}{\partial\zeta}
\zeta+
\frac{\partial\beta}{\partial\bar{\zeta}}\bar{\zeta}+
\beta+\bar{\beta}=0
 \end{split}
\end{equation}
As a result:
\begin{equation}
 \frac{\partial\bar{\beta}}{\partial\zeta}=\frac{\sqrt{-1}(\beta+\bar{\beta})}{2v}
\end{equation}
As a summary:
\begin{equation}\label{ABCD}
 \begin{split}
 C(\zeta,z)&=\frac{1}{v}\\
 B(\zeta,z)&=\beta(\zeta)-\frac{y}{v^2}\\
 A(\zeta,z)&
  =\alpha(\zeta)+
 \frac{\partial\bar{\beta}}{\partial\zeta}z
 +\frac{\partial\beta}{\partial\bar{\zeta}}\bar{z}
 +\frac{y^2}{v^3}\\
 &=\alpha(\zeta)-
 \frac{y}{v}(\beta(\zeta)+\bar{\beta}(\zeta))
 +\frac{y^2}{v^3}\\
 \end{split}
\end{equation}
the volume form of $\eta$ is:
\begin{equation*}
    vol=\left(\frac{\alpha(\zeta)}{v}-
|\beta(\zeta)|^2 \right)du\wedge dv\wedge dx\wedge dy
\end{equation*}
so
\begin{equation}\label{vol}
    \alpha (\zeta)>v|\beta(\zeta)|^2
\end{equation}
the scalar curvature is:
\begin{equation}\label{csc}
    s=-\frac{1}{\alpha-v|\beta|^2}
\triangle_{\zeta}\log\left(\frac{\alpha}{v}
-|\beta|^2\right)
\end{equation}
where $\triangle_{\zeta}=\frac{\partial^2}{\partial u^2}
+\frac{\partial^2}{\partial v^2}$,
let $s$ be the section $\mathbb{H}^+\rightarrow \mathbb{H}^+\times_{Id}\mathbb{C}$ determined by $s(\zeta)=[\zeta,0]$, then the metric $s^*\eta$ on $\mathbb{H}^+$ can be written as:
\begin{equation*}
    s^*\eta(\zeta)=\frac{\sqrt{-1}}{2}\alpha(\zeta)d\zeta
    \wedge d\bar{\zeta}
\end{equation*}
when $\alpha(\zeta)=1/v^2$, it's the classical Poincaré half upper plane.

When $\beta=0,\alpha=1/v^2$, we obtain the following K\"ahler metric on $\mathbb{H}^+\times\mathbb {C}$ :
\begin{equation}\label{SF-cscK}
    \begin{split}
        \eta=\sqrt{-1}\partial\bar{\partial}
        \left(
                     \log\frac{1}{v^2}+\frac{y^2}{v}
        \right)
    \end{split}
\end{equation}

\begin{lemma}\label{G.I}
    the K\"ahler metric $\eta$ on $\mathbb{H}^+\times \mathbb{C}$ defined as \ref{SF-cscK} is $\mathcal{G}$-invariant, therefore is a well-defined $SL_2(\mathbb{Z})$-invariant K\"ahler metric on
    $\mathbb{H}^+\times_{Id}\mathbb{C}$, moreover it is semi-flat with constant scalar curvature.
\end{lemma}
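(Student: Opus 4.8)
The plan is to verify the three assertions of Lemma \ref{G.I} in turn: (i) $\mathcal{G}$-invariance of $\eta$ as defined by \eqref{SF-cscK}, (ii) the consequent well-definedness of $\eta$ as an $SL_2(\mathbb{Z})$-invariant Kähler metric on $\mathbb{H}^+\times_{Id}\mathbb{C}$, and (iii) the semi-flat and constant-scalar-curvature properties. Since $\eta=\sqrt{-1}\partial\bar\partial\Psi$ with the global potential $\Psi=\log v^{-2}+y^2/v$, for (i) it suffices to check that for each generator $g(A,\mathbf{n})$ of $\mathcal{G}$ the pulled-back potential $g(A,\mathbf{n})^*\Psi$ differs from $\Psi$ by the real part of a holomorphic (hence pluriharmonic) function, so that $\sqrt{-1}\partial\bar\partial$ kills the difference. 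Concretely I would take the two families of generators: the lattice translations $g(\mathbf{1},\mathbf{n})$, $\mathbf{n}=(n_1,n_2)$, acting by $(\zeta,z)\mapsto(\zeta,z+n_1\zeta+n_2)$, and the modular part, generated by $\zeta\mapsto\zeta+1$ and $\zeta\mapsto-1/\zeta$ (with $z\mapsto z$, resp. $z\mapsto z/\zeta$). For the lattice translations one computes that $v$ is unchanged and $y$ becomes $y+n_1 v$ (taking $n_2$ real), so $y^2/v$ changes by $2n_1 y + n_1^2 v = 2\,\mathrm{Im}(n_1\bar\zeta\,\text{-type term})$; the point is that $2n_1 y+n_1^2 v$ is pluriharmonic in $(\zeta,z)$ — indeed it equals $\mathrm{Im}$ of an explicit holomorphic function such as $-\sqrt{-1}n_1(n_1\zeta+2z)$ up to constants — so the difference is killed by $\partial\bar\partial$. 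For $\zeta\mapsto-1/\zeta$, $z\mapsto z/\zeta$, one uses the standard identities $\mathrm{Im}(-1/\zeta)=v/|\zeta|^2$ and $\mathrm{Im}(z/\zeta)=(y u - x v)/|\zeta|^2$, from which $\log v^{-2}$ picks up $2\log|\zeta|^2=4\,\mathrm{Re}\log\zeta$ and $y^2/v$ transforms so that the total change is again $\mathrm{Re}$ of something holomorphic; the bookkeeping here is the one genuinely fiddly computation, but it is exactly the computation underlying the $SL_2(\mathbb{Z})$-invariance of the Siegel–Jacobi metric and is standard.

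Once (i) holds, (ii) is immediate: $\mathcal{N}$-invariance (the lattice translations) gives descent to $\mathbb{H}^+\times_{Id}\mathbb{C}$, and the residual $\mathcal{G}/\mathcal{N}\cong SL_2(\mathbb{Z})$-invariance is precisely the modular part of (i); positivity of $\eta$ as a Kähler form follows from the formulas already derived in the excerpt — with $\beta=0$, $\alpha=v^{-2}$ the volume density is $\alpha/v-|\beta|^2=v^{-3}>0$, and positive-definiteness of the Hermitian matrix $\begin{pmatrix}A & B\\ \bar B & C\end{pmatrix}$ reduces to $C=1/v>0$ and $AC-|B|^2=v^{-3}>0$, so no further work is needed.

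For (iii), semi-flatness is the statement that the restriction of $\eta$ to each fiber is Ricci-flat; but the restriction to the fiber over $\zeta$ is $\frac{\sqrt{-1}}{2}C\,dz\wedge d\bar z=\frac{\sqrt{-1}}{2v}dz\wedge d\bar z$, a flat metric on the torus $\mathbb{C}/(\mathbb{Z}\oplus\mathbb{Z}\zeta)$, hence trivially Ricci-flat. The constant scalar curvature claim follows by plugging $\alpha=v^{-2}$, $\beta=0$ into the scalar-curvature formula \eqref{csc}: we get $s=-\dfrac{1}{v^{-3}}\,\triangle_\zeta\log(v^{-3})=-v^{3}\cdot(-3)\triangle_\zeta\log v = 3v^3\triangle_\zeta\log v$, and since $\log v$ is harmonic on $\mathbb{H}^+$ away from... — more carefully, $\triangle_\zeta\log v=\partial_u^2\log v+\partial_v^2\log v=\partial_v(1/v)=-1/v^2$, so $s=3v^3\cdot(-1/v^2)=-3v$; I must double-check the normalization, because Theorem \ref{T1} advertises scalar curvature $-3$, so the correct reading of \eqref{csc} should yield the constant $-3$ (the apparent factor $v$ is absorbed because $\triangle_\zeta$ in \eqref{csc} is the flat Laplacian while the relevant metric Laplacian on the base carries a factor $\alpha^{-1}=v^2$; tracking this normalization consistently is the one place to be careful).

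**The main obstacle** is purely computational: carrying out the transformation of the potential under $\zeta\mapsto-1/\zeta$, $z\mapsto z/\zeta$ and exhibiting the difference $g^*\Psi-\Psi$ explicitly as $2\,\mathrm{Re}(\text{holomorphic})$, together with pinning down the exact numerical normalization in \eqref{csc} so that the constant comes out as $-3$ rather than a $v$-dependent expression. Neither step involves any conceptual difficulty — the invariance is inherited from the well-known modular behavior of the Siegel–Jacobi Kähler potential, and the scalar-curvature computation is a one-line substitution once the Laplacian normalization is fixed — so I would present (i) by checking generators, cite the semi-flat/Ricci-flat observation as immediate from the fiberwise form, and record the scalar curvature as a direct consequence of \eqref{csc}.
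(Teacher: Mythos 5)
Your proposal is correct and reaches the same conclusion, but the invariance argument runs at a genuinely different level from the paper's. The paper splits $\eta=\eta_1+\eta_2$ with $\eta_1$ the pullback of the hyperbolic metric on the base and $\eta_2$ the fiber part encoded by the Hermitian matrix $M$, and then verifies the tensorial identity $J_g(M\circ g)\bar J_g^t=M$ for the four generators $g(T,\mathbf{0})$, $g(S,\mathbf{0})$, $g(\mathbf{1},(1,0))$, $g(\mathbf{1},(0,1))$. You instead work at the level of the global potential $\Psi=-\log v^2+y^2/v$ and show $g^*\Psi-\Psi$ is pluriharmonic for the same generators; this is valid since each $g$ is a biholomorphism, so $g^*$ commutes with $\partial\bar\partial$. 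Your route is arguably lighter: for the lattice translations the difference is $2n_1y+n_1^2v=\mathrm{Im}(2n_1z+n_1^2\zeta)$ (your candidate holomorphic function is slightly misstated, but this is the correct one), and for $S$ the difference of the $y^2/v$ terms works out to $-\mathrm{Im}(z^2/\zeta)$ while $-\log v^2$ picks up $4\,\mathrm{Re}\log\zeta$, all manifestly pluriharmonic. The paper's matrix computation is more mechanical but makes the positive-definiteness bookkeeping transparent. On the semi-flat and cscK claims the paper merely asserts ``direct computation''; your fiberwise restriction $\frac{\sqrt{-1}}{2v}dz\wedge d\bar z$ argument supplies the semi-flat part cleanly. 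Your scalar-curvature substitution contains a slip you yourself flag: the prefactor in \eqref{csc} is $-(\alpha-v|\beta|^2)^{-1}=-v^2$, not $-(\alpha/v-|\beta|^2)^{-1}=-v^3$, while the logarithm's argument is $\alpha/v-|\beta|^2=v^{-3}$; with the correct reading one gets $s=-v^2\cdot\triangle_\zeta\log(v^{-3})=-v^2\cdot(3/v^2)=-3$ with no stray factor of $v$. With that normalization fixed, the proposal is complete.
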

\begin{proof}
    We rewrite $\eta$ as:
\begin{equation}
 \begin{split}
\eta=\underbrace{\frac{\sqrt{-1}}{2}\frac{d\zeta\wedge d\bar{\zeta}}{v^2}}_{\eta_1}
+\underbrace{\frac{\sqrt{-1}}{2}
[d\zeta,dz]\wedge\begin{pmatrix}
\frac{y^2}{v^3}&-\frac{y}{v^2}\\-\frac{y}{v^2}&
\frac{1}{v}
\end{pmatrix}
\begin{pmatrix}
d\bar{\zeta}\\d\bar{z}
\end{pmatrix}}_{\eta_2}
 \end{split}
\end{equation}
the properties of semi-flat and constant scalar curvature can be checked by direct computation:
\begin{equation*}
    \begin{split}
       &1,volume\ of\ fibers\equiv1 \\
       &2,scalar\ curvature\equiv -3
    \end{split}
\end{equation*}
we prove the part of invariant-property: we notice that $\eta_1=\frac{\sqrt{-1}}{2}\frac{d\zeta\wedge d\bar{\zeta}}{v^2}$ is just the pullback of the hyperbolic metric on the base space $\mathbb{H}^+$, since the hyperbolic metric is $SL_2(\mathbb{Z})-invariant$ so the first part $\eta_1$ is $\mathcal{G}-invariant$, so it suffices to check the $\mathcal{G}-invariant$ of $\eta_2$. Denote
\begin{equation*}
    [\eta_2]=M(\zeta,z):=\begin{pmatrix}
\frac{y^2}{v^3}&-\frac{y}{v^2}\\-\frac{y}{v^2}&
\frac{1}{v}
\end{pmatrix}
\end{equation*}
for each $g\in \mathcal{G}$. Let $J_g$ be the Jacobian matrix of
$g$, i.e.
\begin{equation*}
    \begin{split}
        g(\zeta,z)&=(g_1(\zeta,z),g_2(\zeta,z))\\
        J_g:&=\begin{pmatrix}
\frac{\partial g_1}{\partial\zeta}
&\frac{\partial g_2}{\partial\zeta}
\\ \frac{\partial g_1}{\partial z}
&\frac{\partial g_2}{\partial z}
\end{pmatrix}
    \end{split}
\end{equation*}
The $\mathcal{G}$-invariant of $\eta_2$ is equivalent to
\begin{equation*}
  J_g(M\circ g)\bar{J}_g^t=M,\forall g\in \mathcal{G}
\end{equation*}
Since $g(A,(m,n))=g(A,(0,0))\cdot g(1,(1,0))^m\cdot g(1,(0,1))^n,
\forall A\in SL_2(\mathbb{Z}),m,n\in Z^2$, and $SL_2(\mathbb{Z})$ is generated by
$$T=\begin{pmatrix}
1&1\\0&1
\end{pmatrix},S=\begin{pmatrix}
0&1\\-1&0
\end{pmatrix}
$$
so, it suffices to check the invariant property for elements:
$$
g(T,\textbf{0}),g(S,\textbf{0}),
g(1,(1,0)),g(1,(0,1))
$$
The following is a direct computation:
\begin{equation*}
 \begin{split}
 [g(T,\textbf{0})^*\eta_2]&=M
 \end{split}
\end{equation*}

\begin{equation*}
 \begin{split}
  [g(S,\textbf{0})]^*\eta_2]&=
  \begin{pmatrix}
\frac{1}{\zeta^2}&
\frac{z}{\zeta^2}
\\0&-\frac{1}{\zeta}
\end{pmatrix}
\begin{pmatrix}
\frac{\left(
\frac{xv-yu}{u^2+v^2}
\right)^2}
{\left(
\frac{v}{u^2+v^2}
\right)^3}
&
-\frac{\frac{xv-yu}{u^2+v^2}}
{\left(
\frac{v}{u^2+v^2}
\right)^2}
\\
-\frac{\frac{xv-yu}{u^2+v^2}}
{\left(
\frac{v}{u^2+v^2}
\right)^2}
&
\frac{1}{\frac{v}{u^2+v^2}}
\end{pmatrix}
\begin{pmatrix}
\frac{1}{\bar{\zeta}^2}
&0\\ \frac{\bar{z}}{\bar{\zeta}^2}
&-\frac{1}{\bar{\zeta}}
\end{pmatrix}\\
&=\frac{1}{u^2+v^2}\begin{pmatrix}
1&z\\0&-\zeta
\end{pmatrix}
 \begin{pmatrix}
\frac{(xv-yu)^2}{v^3}
&-\frac{xv-yu}{v^2}\\
-\frac{xv-yu}{v^2}&\frac{1}{v}
\end{pmatrix}
\begin{pmatrix}
1&0\\ \bar{z}&-\bar{\zeta}
\end{pmatrix}\\
&=M
 \end{split}
\end{equation*}

\begin{equation*}
 \begin{split}
 [g(1,(0,1))^*\eta_2]=M
 \end{split}
\end{equation*}

\begin{equation*}
 \begin{split}
 [g(1,(1,0))^*\eta_2]&=
  \begin{pmatrix}
1&1\\0&1
\end{pmatrix}
\begin{pmatrix}
\frac{(y+v)^2}{v^3}&-\frac{y+v}{v^2}\\-\frac{y+v}{v^2}&
\frac{1}{v}
\end{pmatrix}
\begin{pmatrix}
1&0\\1&1
\end{pmatrix}\\
&=M
 \end{split}
\end{equation*}
These complete the proof.
\end{proof}

\begin{theorem}\label{universal-rigidity}
   There is a unique $SL_2(\mathbb{Z})$-invariant semi-flat cscK metric on $\mathbb{H}^+\times_{Id}\mathbb{C}$ with fiber volume $1$ and scalar curvature $-3$.
\end{theorem}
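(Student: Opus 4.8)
The plan is to reduce the statement to the uniqueness of a positive solution of a scalar Liouville-type equation on the modular curve. By the computation already carried out above (the equations through \eqref{ABCD}), every semi-flat K\"ahler metric on $\mathbb{H}^{+}\times_{Id}\mathbb{C}$ with fiber volume $1$ is of the form \eqref{eta} with $C=1/v$, $B=\beta(\zeta)-y/v^{2}$ and $A=\alpha(\zeta)-\frac{y}{v}\big(\beta(\zeta)+\bar\beta(\zeta)\big)+y^{2}/v^{3}$, where $\alpha,\beta$ are functions on $\mathbb{H}^{+}$ subject to $\partial_{\zeta}\bar\beta=\frac{\sqrt{-1}(\beta+\bar\beta)}{2v}$ and $\alpha>v|\beta|^{2}$; by \eqref{csc} its scalar curvature is $-3$ precisely when
\[
\Delta_{\zeta}\log\!\left(\frac{\alpha}{v}-|\beta|^{2}\right)=3\left(\alpha-v|\beta|^{2}\right).
\]
So I must show that the only pair $(\alpha,\beta)$ for which this metric is in addition $SL_{2}(\mathbb{Z})$-invariant is $\beta\equiv0$, $\alpha\equiv1/v^{2}$, i.e. the metric \eqref{SF-cscK} of Lemma \ref{G.I}.

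First I would pin down $\beta$. The element $-I\in SL_{2}(\mathbb{Z})$ acts on $\mathbb{H}^{+}\times_{Id}\mathbb{C}$ through $g(-I,\mathbf{0})$, that is, by the fiberwise inversion $\sigma\colon(\zeta,z)\mapsto(\zeta,-z)$. Pulling back \eqref{eta} by $\sigma$ leaves the $d\zeta\wedge d\bar\zeta$ and $dz\wedge d\bar z$ terms fixed but negates the mixed terms $d\zeta\wedge d\bar z$ and $dz\wedge d\bar\zeta$, so $\sigma^{*}\eta=\eta$ forces $B(\zeta,-z)=-B(\zeta,z)$; since $B=\beta(\zeta)-y/v^{2}$ this reads $\beta(\zeta)+y/v^{2}=-\beta(\zeta)+y/v^{2}$, hence $\beta\equiv0$ (and then $A\circ\sigma=A$, $C\circ\sigma=C$ automatically). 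With $\beta=0$ one has $\eta=\eta_{std}+\frac{\sqrt{-1}}{2}\big(\alpha-\tfrac{1}{v^{2}}\big)d\zeta\wedge d\bar\zeta$, where $\eta_{std}$ is the metric of Lemma \ref{G.I}; since $\eta_{std}$ and the hyperbolic metric $\frac{\sqrt{-1}}{2}v^{-2}\,d\zeta\wedge d\bar\zeta$ are both $SL_{2}(\mathbb{Z})$-invariant, $\eta$ is $SL_{2}(\mathbb{Z})$-invariant if and only if $\psi:=v^{2}\alpha$ is an $SL_{2}(\mathbb{Z})$-invariant function on $\mathbb{H}^{+}$, i.e. a positive smooth function on the modular orbifold $Y=\mathbb{H}^{+}/SL_{2}(\mathbb{Z})$. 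Using $\log(\alpha/v)=\log\psi-3\log v$ and $\Delta_{\zeta}\log v=-v^{-2}$, the scalar-curvature equation becomes
\[
\Delta_{g_{\mathbb{H}}}\log\psi=3(\psi-1)
\]
on $Y$, with $g_{\mathbb{H}}$ the hyperbolic metric; $\psi\equiv1$ (the metric \eqref{SF-cscK}) is a solution, and the claim is that it is the only positive one.

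For uniqueness I would multiply this equation by $(\psi-1)$ and integrate over $Y$ against $dV_{g_{\mathbb{H}}}$; integration by parts gives
\[
-\int_{Y}\frac{|\nabla\psi|_{g_{\mathbb{H}}}^{2}}{\psi}\,dV_{g_{\mathbb{H}}}=3\int_{Y}(\psi-1)^{2}\,dV_{g_{\mathbb{H}}},
\]
with no contribution from the orbifold points (where $\psi$ is smooth as an $SL_{2}(\mathbb{Z})$-invariant function) and a possible boundary term only at the cusp $v\to\infty$. Granting that this cusp term vanishes, the left side is $\le0$ and the right side is $\ge0$, so both are $0$ and $\psi\equiv1$; then $\alpha=1/v^{2}$ and $\eta$ is the metric \eqref{SF-cscK}. (Equivalently one can run the maximum principle: at an interior maximum of $\log\psi$ one has $\Delta_{g_{\mathbb{H}}}\log\psi\le0$, hence $\psi\le1$ there, and dually at an interior minimum $\psi\ge1$; combined with the cusp behavior this again forces $\psi\equiv1$.)

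The heart of the argument, and the step I expect to be hardest, is controlling $\psi$ at the cusp, since a priori $\psi$ is only a positive $SL_{2}(\mathbb{Z})$-invariant solution and need not be bounded as $v\to\infty$. In the cusp coordinate $v=e^{s}$ the equation reads $U_{ss}-U_{s}+e^{2s}U_{xx}=3(e^{U}-1)$ for $U=\log\psi$ and $x=\mathrm{Re}\,\zeta\in\mathbb{R}/\mathbb{Z}$; the non-constant Fourier modes in $x$ obey a linear ODE whose only solutions not growing in $s$ decay super-exponentially, and the mean mode is governed by $U''-U'=3(e^{U}-1)$, for which $U\equiv0$ is a saddle, so a globally defined bounded solution must lie on its stable manifold, i.e. $U\to0$. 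A barrier/comparison argument should rule out the remaining possibility that $\psi$ is unbounded near the cusp: such a solution would develop a singularity at a finite value of $v$, contradicting that $\eta$ is a smooth metric on all of $\mathbb{H}^{+}\times_{Id}\mathbb{C}$. Hence $\psi\to1$ at the cusp with $\nabla\psi\to0$, so $\log\psi$ extends across the cusp of the compactified $j$-line $\mathbb{P}^{1}$, the boundary term vanishes, and the integration-by-parts identity closes the proof. This cusp estimate is the only part that is not a routine computation.
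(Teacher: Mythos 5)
Your reduction is sound and partly more efficient than the paper's: the derivation of the normal form through \eqref{ABCD}, the use of the involution $g(-I,\mathbf{0}):(\zeta,z)\mapsto(\zeta,-z)$ to force $\beta\equiv 0$ (the paper instead extracts $\beta\equiv0$ from invariance under $[\zeta,z]\mapsto[-1/\zeta,-z/\zeta]$ by comparing coefficients), and the translation of the problem into the single equation $\Delta_{g_{\mathbb{H}}}\log\psi=3(\psi-1)$ for the $SL_2(\mathbb{Z})$-invariant function $\psi=v^2\alpha$ are all correct. The gap is exactly where you locate it: the integration by parts (or the maximum principle) over the noncompact modular curve is not justified without knowing that $\psi\to 1$ with gradient decay at the cusp, and nothing in the hypotheses gives you this a priori. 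Your sketch of the cusp analysis does not close it: the nonlinear term $e^{U}$ couples all Fourier modes in $x$, so the non-constant modes do \emph{not} satisfy a linear ODE; the stable-manifold argument for $U''-U'=3(e^{U}-1)$ presupposes that $U$ is bounded near the cusp, which is precisely what has to be proved; and the barrier/comparison step that is supposed to exclude unbounded $\psi$ is only asserted. As written, a positive invariant solution with $\psi\to 0$ or $\psi\to\infty$ as $v\to\infty$ is not excluded, so the proof is incomplete.

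It is worth noting that the paper's proof avoids the cusp entirely by a more elementary two-step argument, which you could adopt. First, writing $f=v^2\alpha$, it integrates the identity $\frac{f}{v^2}=\frac13\Delta\log f+\frac1{v^2}$ over the fundamental rectangle $[u,u+1]\times[v,v+\epsilon]$; the left side is independent of $u$ by $1$-periodicity of $f$ in $u$, and Green's theorem plus periodicity of the boundary terms yields $\int_v^{v+\epsilon}\partial_u^2\log f\,d\hat v=0$ for all $v,\epsilon$, hence $\partial_u^2\log f\equiv 0$ and (by periodicity) $f=f(v)$. Second, the inversion relation $f(\zeta)=f(-1/\zeta)$, i.e.\ $f(v)=f\bigl(v/(u^2+v^2)\bigr)$ for \emph{all} $u$, evaluated at $u=\kappa v$ and combined with $f(v)=f(1/v)$, gives $f(v)=f\bigl((\kappa^2+1)v\bigr)$ for every $\kappa$, so $f$ is constant and the PDE forces $f\equiv 1$. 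This uses only local computations and the group relations, with no asymptotic analysis; I would either switch to this route or supply a complete proof of the cusp asymptotics before the integration by parts.
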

\begin{proof}
    it suffices to show the uniqueness, let the desired metric
$\eta$ given as \ref{eta} with coefficients satisfies \ref{ABCD}, we need to show
$\beta(\zeta)=0$ and $\alpha(\zeta)=1/v^2$.
Consider the invariant property of $\eta$ under the transform:
\begin{equation}
    [\zeta,z]\rightarrow [-1/\zeta,-z/\zeta]
\end{equation}
we compare coefficients of $d\zeta\wedge d\bar{\zeta}$:
\begin{equation}\label{zeta}
\begin{split}
   & \alpha(-1/\zeta)-\frac{2(yu-xv)}{v}\beta_1(-1/\zeta)
+2(\beta_1(-/\zeta)x+\beta_2(-1/\zeta)y)\\
&=\alpha(\zeta)|\zeta|^4\\
\end{split}
\end{equation}
take $z=x+\sqrt{-1}y=0$ on \ref{zeta} we conclude:
\begin{equation}\label {alpha2}
    \alpha(-1/\zeta)=\alpha(\zeta)|\zeta|^4
\end{equation}
therefore
\begin{equation}
    -\frac{2(yu-xv)}{v}\beta_1(-1/\zeta)
+2(\beta_1(-/\zeta)x+\beta_2(-1/\zeta)y)=0
\end{equation}
take $y=0$, we have:
\begin{equation}
    4x\beta_1(-1/\zeta)=0,\forall \zeta,x \Rightarrow \beta_1(\zeta) \equiv 0
\end{equation}
so
\begin{equation}
    2\beta_2(-1/\zeta)y=0\forall \zeta,y\Rightarrow \beta_2(\zeta)=0
\end{equation}
as a result $\beta(\zeta)=\beta_1+\sqrt{-1}\beta_2 \equiv 0$.

Finally, we show $\alpha(\zeta)=1/v^2$: let $f:=v^2\alpha(\zeta)$, then \ref{alpha2}
implies:
\begin{equation}\label{f1}
    f(\zeta)=f(-1/\zeta)
\end{equation}
moreover, the invariant of $\eta$ under transform: $[\zeta,z]\rightarrow [\zeta+1,z]$ implies:
\begin{equation}\label{f2}
    f(\zeta+1)=f(\zeta)
\end{equation}
the scalar curvature equation \ref{csc} reads:
\begin{equation}
    \begin{split}
        &\triangle \log f=\frac{3(f-1)}{v^2}\\
        &f\in C^{\infty}(\mathbb{H}^+), f>0.
    \end{split}
\end{equation}
If $f(u,v)=f(v)$ is independent of $u=Re\zeta$, then \ref{f1} implies:
\begin{equation}
    \begin{split}
        f(v)&=f(v/(u^2+v^2)),\forall (u,v)\in \mathbb{H}^+\\
        take\ u=0: f(v)&=f(1/v)\\
        take\ u=\kappa v: f(v)&=f\left(\frac{1}{(\kappa^2+1)v}\right)=f((\kappa^2+1)v),\forall \kappa \in \mathbb{R}
    \end{split}
\end{equation}
as a result $f=constant$ and the equation \ref{csc} implies $f\equiv 1$.

Therefore, to show $f\equiv 1$, it suffices to show that $f(u,v)$ is independent
of $u$. Let $D_{uv}(\epsilon)\subset \mathbb{H}^+$ be the rectangle domain:
\begin{equation*}
    D_{uv}(\epsilon):=\{(\hat{u},\hat{v}),u\le
    \hat{u}\le u+1, v\le \hat{v} \le v+\epsilon \}
\end{equation*}

let
\begin{equation*}
    \Omega_{uv}(\epsilon)=D_{uv}(\epsilon)\times_{Id}\mathbb{C}\subset
    \mathbb{H}^+\times_{Id}\mathbb{C}
\end{equation*}
The invariant of $\eta$ under transision $[\zeta,z]\rightarrow [\zeta+1,z]$
implies that the volume $V(u,v,\epsilon):=vol(\Omega_{uv}(\epsilon))$ is independent of $u$, i.e. $\frac{\partial V(u,v,\epsilon)}{\partial u}=0$.
The volume form of $\eta$ is computed as
$vol(\eta)=\frac{\alpha}{v}du\wedge dv\wedge dx\wedge dy$.
\begin{equation*}
    \begin{split}
        V(u,v,\epsilon)&=\int_{\Omega_{uv}(\epsilon)}
        \frac{\alpha(\zeta)}{v}
        du\wedge dv\wedge dx\wedge dy\\
        &=\int_{D_{uv}(\epsilon)}\frac{f(\zeta)}{v^2}du\wedge dv\\
        &=\frac{1}{3}\int_{D_{uv}(\epsilon)}\triangle \log f du\wedge dv
        +\int_{D_{uv}(\epsilon)}\frac{1}{v^2}du\wedge dv\\
        &=\frac{1}{3}\left(\int_{\partial D_{uv}(\epsilon)}\triangledown \log f\cdot ds\right)
        +\frac{1}{v}-\frac{1}{v+\epsilon}\\
        &=\frac{1}{3}\left(-\int_{u}^{u+1}\frac{\partial\log f}{\partial v}
        (\hat{u},v)d\hat{u}+\int_{v}^{v+\epsilon}\frac{\partial\log f}{\partial u}
        (u+1,\hat{v})d\hat{v}\right.\\
       &\ \ \ +  \left.\int_{u+1}^{u}\frac{\partial\log f}{\partial v}(\hat{u},v+\epsilon)
        d\hat{u}-\int_{v+\epsilon}^{v}\frac{\partial\log f}{\partial u}
        (u,\hat{v})d\hat{v}\right)+\frac{1}{v}-\frac{1}{v+\epsilon}\\
       &=\frac{1}{3}\left(-\int_{u}^{u+1}\frac{\partial\log f}{\partial v}
        (\hat{u},v)d\hat{u}+2\int_{v}^{v+\epsilon}\frac{\partial\log f}{\partial u}
        (u,\hat{v})d\hat{v}\right.\\
        &\ \ \ +\left.\int_{u+1}^{u}\frac{\partial\log f}{\partial v}(\hat{u},v+\epsilon)
        d\hat{u}\right)+\frac{1}{v}-\frac{1}{v+\epsilon}
    \end{split}
\end{equation*}
as a result we have:
\begin{equation}\label{vol-in}
    \begin{split}
        0&=\frac{\partial V(u,v,\epsilon)}{\partial u}
        =\frac{2}{3}\int_{v}^{v+\epsilon}\frac{\partial^2\log f}{\partial u^2}
        (u,\hat{v})d\hat{v}
    \end{split}
\end{equation}
since \ref{vol-in} is valid for arbitrary $v,\epsilon>0$, so
\begin{equation}
    \frac{\partial^2\log f}{\partial u^2}(u,v)\equiv 0
\end{equation}
Therefore $\log f$ is linear w.r.t $u$, since it is also period w.r.t $u$, so $f(u,v)=f(v)$, as desired.
\end{proof}

Now we try to construct a semi-flat cscK metric on $X^*$: consider the following metric defined on $\mathbb{H}^+\times_{\tilde{\omega}}\mathbb{C}$:
\begin{equation}\label{etaX}
    \eta_X:=h_{\tilde{\omega}}^*\eta
\end{equation}
Combining commutative diagram \ref{R.D} and Lemma \ref{G.I}, we obtain the following result:
\begin{corollary}\label{pi1invariant}
    $\eta_X$ is a $\pi_1(o,\Sigma_g^*)$-invariant semi-flat cscK metric on $\mathbb{H}^+\times_{\tilde{\omega}}\mathbb{C}$, therefore inherits to a well-defined semi-flat cscK metric on $X^*$
\end{corollary}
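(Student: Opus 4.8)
The plan is to deduce the corollary entirely from Lemma \ref{G.I} and the commutative diagram \eqref{R.D}, using that the natural morphism $h_{\tilde{\omega}}$ of \eqref{homega} is a local biholomorphism.

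First I would observe that $h_{\tilde{\omega}}\colon \mathbb{H}^+\times_{\tilde{\omega}}\mathbb{C}\to \mathbb{H}^+\times_{Id}\mathbb{C}$ is a local biholomorphism: on the fibre coordinate $z$ it is the identity, while $\tilde{\omega}\colon\mathbb{H}^+\to\mathbb{H}^+_*$ is locally biholomorphic, being a lift of the unbranched covering $\omega\colon\Sigma_g^*\to\mathbb{H}^+_*/PSL_2(\mathbb{Z})$ through the covering maps $\pi$ and $P$. Consequently $\eta_X=h_{\tilde{\omega}}^*\eta$ is again a positive $(1,1)$-form, i.e.\ a K\"ahler metric; its restriction to a fibre $\Phi^{-1}(\zeta)$ is the pull-back under an isomorphism of complex tori of the flat fibre metric $\frac{\sqrt{-1}}{2v}\,dz\wedge d\bar z$ of $\eta$, hence flat and in particular Ricci-flat, so $\eta_X$ is semi-flat; the fibre volume is still $1$ because fibres are carried isomorphically; and the scalar curvature, being a pointwise Riemannian invariant, remains the constant $-3$. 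Thus $\eta_X$ is a semi-flat cscK metric on $\mathbb{H}^+\times_{\tilde{\omega}}\mathbb{C}$ with the stated normalisations.

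Next I would check $\pi_1(o,\Sigma_g^*)$-invariance. For $[\gamma]\in\pi_1(o,\Sigma_g^*)$ acting as a deck transformation of $\mathbb{H}^+\times_{\tilde{\omega}}\mathbb{C}$, diagram \eqref{R.D} gives $h_{\tilde{\omega}}\circ[\gamma]=\rho([\gamma])\circ h_{\tilde{\omega}}$, whence
\[
[\gamma]^*\eta_X=(h_{\tilde{\omega}}\circ[\gamma])^*\eta=(\rho([\gamma])\circ h_{\tilde{\omega}})^*\eta=h_{\tilde{\omega}}^*\bigl(\rho([\gamma])^*\eta\bigr)=h_{\tilde{\omega}}^*\eta=\eta_X,
\]
the last equality being the $SL_2(\mathbb{Z})$-invariance of $\eta$ from Lemma \ref{G.I}. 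Since $X^*\cong(\mathbb{H}^+\times_{\tilde{\omega}}\mathbb{C})/\pi_1(o,\Sigma_g^*)$ with holomorphic covering quotient map, this invariance lets $\eta_X$ descend to a K\"ahler metric on $X^*$, and semi-flatness, fibre volume and constant scalar curvature pass to the quotient because the covering is a local biholomorphism sending fibres to fibres. The only points needing genuine verification are the local-biholomorphism property of $h_{\tilde{\omega}}$ and the compatibility of \eqref{R.D} with the defining $\mathcal{G}_X$- and $\mathcal{G}$-actions on $\mathbb{H}^+\times\mathbb{C}$; granting these --- which are already recorded in Section 1 --- the corollary follows at once, so I do not expect a substantive obstacle here.
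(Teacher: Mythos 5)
Your proposal is correct and follows essentially the same route as the paper: the paper likewise dismisses the semi-flat and cscK properties as consequences of the pull-back construction and then verifies $\pi_1(o,\Sigma_g^*)$-invariance by the identical diagram chase $[\gamma]^*\eta_X=(h_{\tilde{\omega}}\circ[\gamma])^*\eta=(\rho([\gamma])\circ h_{\tilde{\omega}})^*\eta=h_{\tilde{\omega}}^*\eta=\eta_X$ using diagram \eqref{R.D} and Lemma \ref{G.I}. Your write-up is in fact slightly more careful, since you justify why pulling back along the local biholomorphism $h_{\tilde{\omega}}$ preserves positivity, fibre volume and scalar curvature, and you write $\rho([\gamma])$ where the paper's displayed computation loosely writes $[\gamma]$ on the target side.
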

\begin{proof}
     The properties of semi-flat and constant scalar curvature is a direct result of "pulling-back" construction. it suffices to check the $\pi_1(o,\Sigma_g^*)$-invariant of $\eta_X$: $\forall [\gamma]\in \pi_1(o,\Sigma_g^*)$, we have
\begin{equation*}
    \begin{split}
        [\gamma]^*\eta_X&=[\gamma]^*h_{\tilde{\omega}}^*\eta=(h_{\tilde{\omega}}\circ [\gamma])^*\eta\\
        &=([\gamma]\circ h_{\tilde{\omega}})^*\eta(use\ the\ diagram \ref{R.D})\\
        &=h_{\tilde{\omega}}^*[\gamma]^*\eta
        =h_{\tilde{\omega}}^*\eta=\eta_X(ues\ the\ lemma\ref{G.I})
    \end{split}
\end{equation*}
\end{proof}
\textbf{Remark:} Consider the metric $\eta(\delta,\epsilon)$
on $\mathbb{H}^+\times_{Id}\mathbb{C}$ defined by
\begin{equation}\label{eta delta epsilon}
\eta(\epsilon):=\sqrt{-1}\partial\bar{\partial}\left(-\delta
    \log v^2+\frac{\epsilon y^2}{v}
    \right)
\end{equation}
replacing $\eta$ with $\eta(\delta,\epsilon)$ and defining
$\eta_{X}(\delta,\epsilon):=h_{\tilde{\omega}}^*\eta(\delta,\epsilon)$, we obtian a family of semi-flat cscK metrics $\{X^*,\eta_{X}(\delta,\epsilon)\}$ parametrized by $\delta,\epsilon$,
each has fiber volume $\epsilon/\delta$ and scalar curvature $-3/\delta$.


\section{Uniqueness of semi-flat cscK metric}
This section investigates the uniqueness of the semi-flat cscK metric on a minimal elliptic surface with section.

\subsection{Decomposition of the fibration automorphism group}
Let $\Phi:X\rightarrow \Sigma_g$ be a minimal elliptic surface with globally defined holomorphic section $s:\Sigma_g\rightarrow X$. we set:
\begin{equation*}
    \begin{split}
        H(X,\Phi)&:=\{(h',h)\in Aut(X)\times Aut(\Sigma_g)| \Phi\circ h'=h\circ \Phi\}\\
        H(X,\Phi,s)&:=\{(h',h)\in Aut(X)\times Aut(\Sigma_g)| \Phi\circ h'=h\circ \Phi,h'\circ s=s\circ h\}\\
    \end{split}
\end{equation*}

Denote $MW(X/\Sigma_g,s)$ be the Mordell-Weil group of holomorphic sections $\sigma:\Sigma_g\rightarrow X$ with zero element $s$. Given $\sigma\in MW(X/\Sigma_g,s)$, we can define
$T_{\sigma}\in Aut(X)$ by
\begin{equation*}
    T_{\sigma}: q\rightarrow q+\sigma(\phi(q))
\end{equation*}
Note that $T_s=Id_X$.

\begin{proposition}
    $\forall$ $(h',h)\in H(X,\Phi)$, there is a unique $\sigma\in MW(X/\Sigma_{g},s)$ and $(h'',h)\in H(X,\Phi,s)$, such that
    \begin{equation*}
        (h',h)=(T_{\sigma}\circ h'',h)
    \end{equation*}
\end{proposition}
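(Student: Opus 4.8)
The strategy is to produce the candidate section $\sigma$ directly from $(h',h)$ and then verify that the residual automorphism fixes the zero section. Given $(h',h) \in H(X,\Phi)$, the section $s \colon \Sigma_g \to X$ can be transported: define $\sigma := h' \circ s \circ h^{-1}$. Since $\Phi \circ h' = h \circ \Phi$, one checks $\Phi \circ \sigma = \Phi \circ h' \circ s \circ h^{-1} = h \circ \Phi \circ s \circ h^{-1} = h \circ h^{-1} = \mathrm{Id}_{\Sigma_g}$, so $\sigma$ is a holomorphic section of $\Phi$, i.e. $\sigma \in MW(X/\Sigma_g, s)$. The first step is thus just this observation together with the fact that $MW(X/\Sigma_g,s)$ consists of \emph{all} holomorphic sections (with $s$ designated as the zero element), so there is nothing to prove about $\sigma$ lying in the group.

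Next I would set $h'' := T_{\sigma}^{-1} \circ h' = T_{-\sigma} \circ h'$, so that $(h',h) = (T_\sigma \circ h'', h)$ by construction. I must check $(h'',h) \in H(X,\Phi,s)$, which has two parts. First, $h''$ respects the fibration: $T_\sigma$ acts fiberwise (it covers $\mathrm{Id}_{\Sigma_g}$ since $\Phi \circ T_\sigma = \Phi$), hence $\Phi \circ h'' = \Phi \circ T_{-\sigma} \circ h' = \Phi \circ h' = h \circ \Phi$. Second, $h''$ fixes the zero section: for $p \in \Sigma_g$, writing $q = s(h^{-1}(p))$, we have $h'(q) = \sigma(p)$ by definition of $\sigma$, and then $h''(q) = T_{-\sigma}(\sigma(p)) = \sigma(p) - \sigma(\Phi(\sigma(p))) = \sigma(p) - \sigma(p) = s(p)$, using that $T_{-\sigma}$ translates the fiber over $p$ by $-\sigma(p)$ and that $s(p)$ is the zero of that fiber. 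Since this holds for all $p$, $h'' \circ s = s \circ h$.

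For uniqueness, suppose $(h',h) = (T_{\sigma_1} \circ h_1'', h) = (T_{\sigma_2} \circ h_2'', h)$ with both $(h_i'',h) \in H(X,\Phi,s)$. Applying both sides to $s \circ h^{-1}$ and using $h_i'' \circ s \circ h^{-1} = s$: we get $T_{\sigma_1} \circ s = T_{\sigma_2} \circ s$, and since $T_{\sigma_i} \circ s = \sigma_i$ (translation of the zero section by $\sigma_i$ gives $\sigma_i$), this forces $\sigma_1 = \sigma_2$, whence $h_1'' = h_2''$ as well.

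The main obstacle — really the only point requiring care rather than formal manipulation — is confirming that $T_{-\sigma}$ does what fiberwise arithmetic suggests: that $q \mapsto q - \sigma(\Phi(q))$ is a well-defined holomorphic automorphism of $X$ and that on the fiber $X_p$ the "subtraction" is the group operation with $s(p)$ as identity. This is exactly the content of the definition $T_\sigma \colon q \mapsto q + \sigma(\Phi(q))$ given just before the proposition (with $T_s = \mathrm{Id}_X$, $T_\sigma \circ T_{\sigma'} = T_{\sigma + \sigma'}$, so $T_\sigma^{-1} = T_{-\sigma}$), so I would simply invoke that; the genuine subtlety — why fiberwise translation by a holomorphic section extends across singular fibers to an automorphism of all of $X$ — is part of the standard theory of the Mordell-Weil group and I would cite it rather than reprove it.
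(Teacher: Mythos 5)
Your proposal is correct and follows essentially the same route as the paper: you take $\sigma = h'\circ s\circ h^{-1}$, set $h'' = T_{-\sigma}\circ h'$, verify the two defining conditions of $H(X,\Phi,s)$ by the same fiberwise computation, and prove uniqueness by evaluating on the zero section. The only (harmless) difference is presentational — you make explicit the check that $\sigma$ is a section and the appeal to the standard fact that fiberwise translation by a section extends to an automorphism of $X$, both of which the paper leaves implicit in its definition of $T_\sigma$.
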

\begin{proof}
    Let $\sigma=h'\circ s\circ h^{-1}$, set $h''=T_{-\sigma}\circ h'$:
 \begin{equation*}
        \xymatrix{
        X\ar[r]^{h'}\ar[d]^{\Phi}
        & X\ar[r]^{T_{-\sigma}}\ar[d]^{\Phi} & X\ar[ld]_{\Phi}\\
        \Sigma_g\ar@/^/[u]^{s}\ar[r]^{h} & \Sigma_g\ar@/_1pc/[ur]_{s}\ar@/^/[u]^{\sigma}
        & \
        }
 \end{equation*}
   we claim that $(h'',h)\in H(X,\Phi,s)$:
(1) check $\Phi\circ h''=h\circ\Phi$:
\begin{equation*}
    \begin{split}
        \Phi\circ h''=\Phi\circ T_{-\sigma}\circ h'=\Phi\circ h'=h\circ\Phi
    \end{split}
\end{equation*}
(2) check $h''\circ s=s\circ h$, $\forall p\in \Sigma_g$:
    \begin{equation*}
        \begin{split}
            h''\circ s(p)&=T_{-\sigma}\circ h'\circ s(p)\\
            &=h'(s(p))-\sigma (\Phi\circ h'\circ s(p))\\
            &=h'(s(p))-\sigma(h\circ \Phi\circ s(p))\\
            &=h'(s(p))-\sigma (h(p))\\
            &=h'(s(p))-h'\circ s\circ h^{-1}(h(p))\\
            &=h'(s(p))-h'(s(p))\\
            &=s(h(p))
        \end{split}
    \end{equation*}
as a result, we proved the existence of the decomposition:
\begin{equation*}
    (h',h)=(T_{\sigma}\circ h'',h)
\end{equation*}
To show the uniqueness, assume
\begin{equation*}
    (h',h)=
(T_{\sigma'}\circ h''',h),\sigma'\in MW(X/\Sigma_g,s), (h''',h)\in H(X,\Phi,s)
\end{equation*}
is another decomposition, then
$T_{\sigma-\sigma'}=(h'''\circ h''^{-1},h)\in H(X,\Phi,s)$, so
\begin{equation*}
    \sigma-\sigma'=s
\end{equation*}
therefore, $\sigma=\sigma'$ and $h'''=h''$.
\end{proof}
\textbf{Remark:} The decomposition $(h',h)=(h''\circ T_{\sigma},h)$ is generally not possible (or loss of uniqueness), since $h'$ is not necessarily a group homomorphism on the fibers.

\subsection{Invariance and uniqueness}
Recall that $S\subset \mathbb{P}^1$ is a finite set
containing critical values of $J$ and $0,1,\infty$ such that
$\Phi^{-1}(\sigma)$ is regular $\forall \sigma\notin J^{-1}(S)$. Denote $\mathbb{P}^1_*=\mathbb{P}^1\setminus S,\mathcal{P}=J^{-1}(S),\Sigma_g^*=\Sigma_g\setminus \mathcal{P},X^*=\Phi^{-1}(\Sigma_g^*),\mathbb{H}^+_*=\mathbb{H}^+\setminus j^{-1}(S)$, where $\mathbb{H}^+$
is the upper half plane, and $j:\mathbb{H}^+\rightarrow \mathbb{C}$ is the elliptic modular function with
$j(e^{2\pi \sqrt{-1}/3})=0,j(\sqrt{-1})=1$. Fix $\zeta_0\in \mathbb{H}^+,\sigma_0=\pi (\zeta_0), s_0=J(\sigma_0), \xi_0\in j^{-1}(s_0)$, then there is a unique holomorphic map $\tilde{\omega}:\mathbb{H}^+\rightarrow \mathbb{H}^+_*$ with $\tilde{\omega}(\zeta_0)=\xi_0$, such that $j\circ \tilde{\omega}=J\circ \pi$:
\begin{equation*}
    \xymatrix{
    \mathbb{H}^+\ar_{\pi}[d]\ar @{-->}^{\tilde{\omega}}[rr]& \ & \mathbb{H}^+_*\ar^{j}[d]\\
    \Sigma_g^*\ar^{J}[rr]& \ & \mathbb{P}^1_*\\
    }
\end{equation*}

\begin{proposition}\label{HXinvariant}
    The semi-flat cscK metric $\eta_X$ we constructed as (\ref{etaX}) is $H(X,\Phi,s)$-invariant, i.e. $\forall(h',h)\in H(X,\Phi,s)$, we have:
    \begin{equation*}
        \begin{split}
            h'^*\eta_X=\eta_X
        \end{split}
    \end{equation*}
\end{proposition}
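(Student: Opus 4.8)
The plan is to transfer an element $(h',h)\in H(X,\Phi,s)$ to the universal cover $\mathbb{H}^+\times\mathbb{C}$ of $X^*$, where the rigidity of $\eta$ established in Theorem \ref{universal-rigidity} (equivalently the explicit $\mathcal{G}$-invariance of Lemma \ref{G.I}) can be applied. First I would restrict $(h',h)$ to $X^*$: since $h$ permutes $\mathcal{P}$ (it is an automorphism of $\Sigma_g$ commuting with $\Phi$, hence preserving the singular-fiber locus and also preserving $J$ up to the modular picture), $h$ restricts to an automorphism of $\Sigma_g^*$, and $h'$ restricts to an automorphism of $X^*$ preserving the fibration and the section. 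Because $\pi:\mathbb{H}^+\to\Sigma_g^*$ is the universal cover, $h|_{\Sigma_g^*}$ lifts to a biholomorphism $\tilde h:\mathbb{H}^+\to\mathbb{H}^+$, i.e. an element of the deck group's normalizer; concretely $\tilde h$ is a M\"obius transformation. The key compatibility is that $h$ must be compatible with the period map: $J\circ h = J$ forces (after lifting) $\tilde\omega\circ\tilde h = \rho(\text{something})\cdot\tilde\omega$ up to the $PSL_2(\mathbb{Z})$-monodromy, so $\tilde h$ intertwines $\tilde\omega$ with an element $A\in SL_2(\mathbb{Z})$ acting on $\mathbb{H}^+_*$.

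Next I would lift $h'$ itself. Using $X^*\cong(\mathbb{H}^+\times\mathbb{C})/\mathcal{G}_X$, the automorphism $h'$ lifts to a biholomorphism $\hat h$ of $\mathbb{H}^+\times\mathbb{C}$ normalizing $\mathcal{G}_X$ and covering $\tilde h$ on the base factor. The condition $h'\circ s=s\circ h$ (preservation of the zero section $[\zeta,0]$) pins down $\hat h$ on the $z$-fibre: it must send $[\zeta,0]$-type points to $[\tilde h(\zeta),0]$-type points, so the fibrewise part of $\hat h$ has no constant translation term and is the linear map $z\mapsto z/(c\tilde\omega(\zeta)+d)$ dictated by the matrix $A$ above. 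In other words, via the morphism $h_{\tilde\omega}$ of \eqref{homega} and the commutative diagram \eqref{R.D}, $\hat h$ corresponds to the action of $g(A,\mathbf{0})\in\mathcal{G}$ on $\mathbb{H}^+\times_{Id}\mathbb{C}$: that is, $h_{\tilde\omega}\circ\hat h = g(A,\mathbf{0})\circ h_{\tilde\omega}$. This is the correspondence the paper announces ("lifting an element $(h',h)\in H(X,\Phi,s)$ to an automorphism of its cover"), and it is the step I expect to be the main obstacle — one must carefully check that the absence of a section-translation together with $J$-compatibility really forces $\hat h$ to land in the $SL_2(\mathbb{Z})$-part with no extra $\mathbb{Z}^2$-translation, handling the ambiguity of the multivalued $\omega$ and the choice of lift.

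Granting that correspondence, the invariance is immediate:
\begin{equation*}
h'^*\eta_X = h'^* h_{\tilde\omega}^*\eta = (h_{\tilde\omega}\circ\hat h)^*\eta = (g(A,\mathbf{0})\circ h_{\tilde\omega})^*\eta = h_{\tilde\omega}^*\bigl(g(A,\mathbf{0})^*\eta\bigr) = h_{\tilde\omega}^*\eta = \eta_X,
\end{equation*}
where the fourth equality uses $h_{\tilde\omega}\circ\hat h = g(A,\mathbf{0})\circ h_{\tilde\omega}$ and the last uses the $\mathcal{G}$-invariance of $\eta$ from Lemma \ref{G.I}. Here I have implicitly worked on the universal cover of $X^*$ and then descended; equivalently one checks $h'^*\eta_X=\eta_X$ first on the dense open set $X^*$ and then notes both sides are smooth semi-flat cscK metrics there, so equality on $X^*$ suffices for the stated claim (the current extension to $X$ being treated separately). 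I would close by remarking that the case distinction in Kodaira's classification does not enter here: the argument only uses that $h$ preserves $J$ and the fibration and that $h'$ preserves the section, which is exactly the definition of $H(X,\Phi,s)$.
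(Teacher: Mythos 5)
Your proposal is correct and follows essentially the same route as the paper: lift $(h',h)$ to the cover, use $J$-compatibility to identify the base part with an element $A\in SL_2(\mathbb{Z})$ acting through $\tilde\omega$, use section-preservation to force the fibrewise part to be linear, and conclude from the $\mathcal{G}$-invariance of $\eta$ in Lemma \ref{G.I}. The only imprecision is that the fibrewise map is a priori $z\mapsto \pm z/(c\tilde\omega(\zeta)+d)$ (an origin-preserving isomorphism of elliptic curves over $\Sigma_g^*$ is multiplication by $\pm1$ in the standard coordinate), which the paper records via $h_{\pm A}$; this is harmless since $-\mathbf{1}\in SL_2(\mathbb{Z})$ and $\eta$ is invariant under it.
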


\begin{proof}
    Without loss of generality, assume $[\zeta,0]\in \mathbb{H}^+\times_{\tilde{\omega}}\mathbb{C}$ treat an element of $X^*$, corresponding the section
    $s(\pi(\zeta))$:
    \begin{equation*}
        \xymatrix{
        \mathbb{H}^+\times_{Id}\mathbb{C}\ar@{-->}[rr]^{ h_{\pm A}} &\ & \mathbb{H}^+\times_{Id} \mathbb{C}\\
        \mathbb{H}^+\times_{\tilde{\omega}} \mathbb{C}\ar[d]_{P}\ar[u]^{h_{\tilde{\omega}}} \ar@{-->}[rr]^{\bar{h}} &\ & \mathbb{H}^+\times_{\tilde{\omega}} \mathbb{C}\ar[d]^{P}\ar[u]_{h_{\tilde{\omega}}} \\
        X^*\ar[d]_{\Phi}\ar[rr]^{h'} &\ & X^*\ar[d]^{\Phi}\\
        \Sigma_g^*\ar[rr]^{h} &\ & \Sigma_g^*
        }
    \end{equation*}

since $(h'\circ P)_*(\pi_1(\mathbb{H}^+\times_{\tilde{\omega}}\mathbb{C})=P_*(\pi_1(\mathbb{H}^+\times_{\tilde{\omega}}\mathbb{C})$, there is a lift $\bar{h}$ of $h'$ satisfies:
\begin{equation*}
    P\circ \bar{h}=h'\circ P
\end{equation*}
Assume
\begin{equation*}
    h([\zeta,z])=[h_1(\zeta),h_2(\zeta,z)]
\end{equation*}
since $\bar{h}$ restrict to fiberwise isomorphism, we have:
\begin{equation*}
    j\circ\tilde{\omega}\circ h_1(\zeta))=j\circ \tilde{\omega}(\zeta)
\end{equation*}
where $j:\mathbb{H}^+\rightarrow \mathbb{C}$ is the elliptic modular function, therefore, there exists a constant matrix $A=
\begin{pmatrix}
    a & b\\
    c & d
\end{pmatrix}\in SL_2(\mathbb{Z})$, such that
\begin{equation*}
    \tilde{\omega}\circ h_1(z)=A\cdot \tilde{\omega}(\zeta)=\frac{a\tilde{\omega}(\zeta)+b}{c\tilde{\omega}(\zeta)+d}
\end{equation*}
since $h'$ preseve the section $s$, so $h_2(\zeta,0)=0$, and $h_2$ restrict on the fiber
$\zeta\times_{\tilde{\omega}(\zeta)}\mathbb{C}$ an isomorphism which preserve the zero element, as a result:
\begin{equation*}
    h_2(\zeta,z)=\pm\frac{z}{c\tilde{\omega}(\zeta)+d}
\end{equation*}

Let $h_{\pm A}$ be the action of $A$ on $\mathbb{H}^+\times_{Id}\mathbb{C}$ defined as
\begin{equation*}
    h_{\pm A}([\zeta,z])= \left[A\cdot \zeta, \pm\frac{z}{c\zeta+d}\right]
\end{equation*}
then
\begin{equation*}
    h_{\tilde{\omega}}\circ \bar{h}=h_{\pm A}\circ h_{\tilde{\omega}}
\end{equation*}
since the semi-flat cscK metric $\eta$ on $\mathbb{H}^+\times_{Id}\mathbb{C}$ defined as (\ref{SF-cscK}) is $SL_2(\mathbb{Z})$-invariant, so
$\eta_X=h_{\tilde{\omega}}^*\eta$ is invariant by pull-back $\bar{h}$:
\begin{equation*}
    \bar{h}^*\eta_X= \bar{h}^*h_{\tilde{\omega}}^*\eta=
    h_{\tilde{\omega}}^*h_{\pm A}^*\eta=h_{\tilde{\omega}}^*\eta=\eta_X
\end{equation*}
So, if we inherit $\eta_X$ to $X^*$, denoted by $\underline{\eta_X}$,
it is also invariant by pull back of $h'^*$ due to $P\circ\bar{h}=h'\circ P$:
\begin{equation*}
    P^*h'^*\underline{\eta_X}=\bar{h}^*P^*\underline{\eta_X}=\bar{h}^*\eta_X=\eta_X=P^*\underline{\eta_X}
\end{equation*}
since $P^*$ is non-degenerate, so $h'^*\underline{\eta_X}=\underline{\eta_X}$
\end{proof}

\begin{theorem}\label{uniqueness}
    Let $\Phi:X\rightarrow \Sigma_g$ be a minimal elliptic surface equipped with a holomorphic section $s$, having non-constant Jacobian $J$, then it admits a $H(X,\Phi,s)$-invariant semi-flat cscK metric with fiber volume $1$ and scalar curvature $-3$. Furthermore, if $X$ possesses at least one singular fiber $X_p$
    with $J(p)\ne\infty$, then this metric is unique.
\end{theorem}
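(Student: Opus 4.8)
The plan is to reduce Theorem~\ref{uniqueness} to the universal case, Theorem~\ref{universal-rigidity}, via the covering description of $X^*$. The existence part is already in hand: Corollary~\ref{pi1invariant} provides the semi-flat cscK metric $\eta_X$ on $X^*$ with fiber volume $1$ and scalar curvature $-3$, and Proposition~\ref{HXinvariant} shows it is $H(X,\Phi,s)$-invariant; so only uniqueness remains, and only under the hypothesis that some singular fiber $X_p$ has $J(p)\neq\infty$. First I would take an arbitrary $H(X,\Phi,s)$-invariant semi-flat cscK metric $\omega$ on $X$ with the prescribed fiber volume and scalar curvature, restrict it to $X^*$, and pull it back along $P:\mathbb{H}^+\times_{\tilde{\omega}}\mathbb{C}\to X^*$ and then along $h_{\tilde{\omega}}$. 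The issue is that $h_{\tilde{\omega}}$ is not invertible, so I cannot directly push $\omega$ down to a metric on $\mathbb{H}^+\times_{Id}\mathbb{C}$; instead I would work with the general form \eqref{eta}–\eqref{ABCD} of a semi-flat cscK metric pulled up to $\mathbb{H}^+\times\mathbb{C}$, whose coefficients are determined by functions $\alpha,\beta$ on $\mathbb{H}^+$, and exploit the $\mathcal{G}_X$-invariance coming from $\pi_1(\Sigma_g^*)$ together with the extra automorphisms supplied by the singular fiber.

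The key observation is that the local model \eqref{N-L-M} at a singular fiber with $J(p)\neq\infty$ furnishes a genuine automorphism $g_p$ of the local cover (listed in the table: $[\tau,z]\mapsto[\zeta_n\tau,\cdot]$ with a root of unity $\zeta_n$ acting on the base coordinate), and this automorphism, transported to $\mathbb{H}^+\times_{Id}\mathbb{C}$ via $h_{\tilde{\omega}}$, realizes an element of $SL_2(\mathbb{Z})$ that fixes the point $\omega(p)\in\mathbb{H}^+$ and is \emph{not} $\pm\mathbf{1}$ (indeed its order $\mu_p\in\{2,3,\infty\}$, and for $J(p)=0,1$ it is an elliptic element of order $3$, $6$, $4$ — actually nontrivial in $PSL_2(\mathbb{Z})$; the $I_0^*$ case giving $-\mathbf{1}$ is excluded because $I_0^*$ is of type $I_b^*$... here one must be slightly careful, but the point is the hypothesis $J(p)\neq\infty$ together with $X_p$ singular forces a nontrivial elliptic fixed point). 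The $H(X,\Phi,s)$-invariance of $\omega$ then means $\eta_X$'s lift is invariant under this extra transformation. Concretely, I would show: invariance under all of $\pi_1(\Sigma_g^*)$ already forces $\alpha(-1/\zeta)=\alpha(\zeta)|\zeta|^4$-type functional equations and $\beta\equiv 0$ along $\tilde{\omega}(\mathbb{H}^+)$, exactly as in the proof of Theorem~\ref{universal-rigidity}, because $\tilde{\omega}:\mathbb{H}^+\to\mathbb{H}^+_*$ is a $d$-sheeted covering and its image is open; an open set of $\mathbb{H}^+$ suffices to run the identity-theorem arguments, and then the remaining scalar-curvature PDE $\triangle\log f = 3(f-1)/v^2$ with $f=v^2\alpha$, together with the functional equations, pins down $f\equiv 1$ on all of $\mathbb{H}^+$ by analytic continuation.

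More precisely, the steps I would carry out are: (i) pull $\omega|_{X^*}$ up to $\mathbb{H}^+\times\mathbb{C}$ via $P$ and check it has the form \eqref{eta} with coefficients \eqref{ABCD} for some $\alpha,\beta:\mathbb{H}^+\to\mathbb{R},\mathbb{C}$ (this is just the local semi-flat classification from Section~2 applied after passing to the cover, where the fibration is literally $\mathbb{H}^+\times_{\tilde\omega}\mathbb{C}$); (ii) write out the $\mathcal{G}_X$-invariance and deduce that $\alpha,\beta$, viewed via $h_{\tilde\omega}$ as functions on $\tilde\omega(\mathbb{H}^+)\subset\mathbb{H}^+$, are invariant under the image subgroup $\rho'(\pi_1(\Sigma_g^*))\subset PSL_2(\mathbb{Z})$ and — using the singular-fiber automorphism and the preservation of the section — under an extra elliptic element; (iii) since $\rho'(\pi_1(\Sigma_g^*))$ together with the stabilizer of $\omega(p)$ generates a subgroup whose action on $\mathbb{H}^+$ has no invariant smooth metric other than multiples of Poincaré (this is where I invoke the strengthened statement the introduction calls Lemma~\ref{strongversion}, whose hypotheses are guaranteed by $X_p$ singular with $J(p)\neq\infty$), conclude $\beta\equiv 0$ and $\alpha=1/v^2$ on the relevant open set, hence on all of $\mathbb{H}^+$ by the PDE plus unique continuation; (iv) this forces the lifted metric to equal the lift of $\eta_X$, and descending back down gives $\omega|_{X^*}=\eta_X$, whence $\omega=\eta_X$ on $X$ since both are closed positive currents agreeing off a finite set of fibers and having equal local potentials (using the $L^1$-extension from Section~4). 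The main obstacle is step (iii): proving that the group generated by $\rho'(\pi_1(\Sigma_g^*))$ and one nontrivial elliptic stabilizer is "large enough" to rule out non-Poincaré invariant solutions of $\triangle\log f=3(f-1)/v^2$ — equivalently, upgrading the volume-integration argument of Theorem~\ref{universal-rigidity} (which crucially used the full $\langle S,T\rangle=SL_2(\mathbb{Z})$) to a setting where we only have a finite-index-or-not subgroup plus an elliptic element; I expect this is precisely the content of Lemma~\ref{strongversion} and the reason the hypothesis $J(p)\neq\infty$ (rather than merely $X$ singular) is needed, since an $I_b$ or $I_b^*$ fiber contributes only a parabolic/$-\mathbf{1}$ element that does not help fix enough of $\mathbb{H}^+$.
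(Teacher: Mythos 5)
Your overall strategy (reduce to a rigidity statement on the universal object via the covering description of $X^*$, and use the elliptic monodromy supplied by a singular fiber with $J(p)\neq\infty$ to feed Lemma \ref{strongversion}) is the right one, and the existence part is handled exactly as in the paper. But there is a genuine gap at the step you treat as routine, namely passing from the pulled-back metric on $\mathbb{H}^+\times_{\tilde{\omega}}\mathbb{C}$ to data on $\mathbb{H}^+_*\times_{Id}\mathbb{C}$. After pulling $\eta'$ back by $P$, the coefficient functions $\alpha,\beta$ live on the universal cover $\mathbb{H}^+$ of $\Sigma_g^*$, the \emph{source} of $\tilde{\omega}$. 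To ``view them via $h_{\tilde{\omega}}$ as functions on $\tilde{\omega}(\mathbb{H}^+)$'' you must show they are constant on the fibers of $\tilde{\omega}$, i.e.\ invariant under the deck group $\pi_1(\mathbb{H}^+_*,\xi_0)$ of $\tilde{\omega}$. This group is in general strictly larger than (and not generated by) the $\pi_1(\Sigma_g^*)$-action, so $\mathcal{G}_X$-invariance alone does not give the descent. This is precisely the content of the paper's key construction: Lemma \ref{orbits-invariant} shows that each $\beta\in\pi_1(\mathbb{H}^+_*,\xi_0)$ normalizes the $\pi_1(\Sigma_g^*)$-action, so the maps $h^{\pm}_{\beta}:[\zeta,z]\mapsto[\beta\cdot\zeta,\pm z]$ project to automorphisms of $X^*$ preserving the section, which extend to elements of $H(X,\Phi,s)$ using minimality and $\kappa(X)\ge 0$. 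Only then does the hypothesis of $H(X,\Phi,s)$-invariance of $\eta'$ force invariance under the full deck group and hence descent to a $\rho(\pi_1(\Sigma_g^*))$-invariant metric $\tilde{\eta}'$ on $\mathbb{H}^+_*\times_{Id}\mathbb{C}$. Your proposal never produces these extra elements of $H(X,\Phi,s)$, so you have no way to invoke the invariance hypothesis where it is actually needed; without the descent, the volume-integration argument of Theorem \ref{universal-rigidity} cannot be run, since the scalar curvature equation on $\mathbb{H}^+\times_{\tilde{\omega}}\mathbb{C}$ involves $\tilde{\omega}$ and the available symmetry group acting on the base $\mathbb{H}^+$ is only $\pi_1(\Sigma_g^*)$.

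Two secondary points. First, the elliptic invariance does not need to be manufactured from the local automorphism $g_p$ of the orbifold chart \eqref{N-L-M} (which is not a global automorphism of $X$ and hence not something $H(X,\Phi,s)$-invariance controls); it is already present as the monodromy $\rho([\gamma_p])$ of the loop around the singular fiber, which acts on the descended metric $\tilde{\eta}'$ automatically. Second, you treat the parabolic invariance required by Lemma \ref{strongversion} as an obstacle (``an $I_b$ or $I_b^*$ fiber contributes only a parabolic\ldots that does not help''), but it is in fact the other needed hypothesis and it is always available: since $J$ is non-constant it has poles, and the monodromy around any $p\in J^{-1}(\infty)$ is $\pm\begin{pmatrix}1&b\\0&1\end{pmatrix}$. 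So the ``main obstacle'' you identify in step (iii) is not where the difficulty lies — once $\tilde{\eta}'$ exists on $\mathbb{H}^+_*\times_{Id}\mathbb{C}$ (note $\tilde{\omega}$ is surjective onto $\mathbb{H}^+_*$, so no unique-continuation argument is needed), one parabolic plus one elliptic element suffice by Lemma \ref{strongversion}. The difficulty is the descent itself.
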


Before prove this, we need following lemmas:
\begin{lemma}\label{orbits-invariant}
    $\forall \beta\in \pi_1(\mathbb{H}_*^1, \xi_0)$ and $\gamma\in \pi_1(\Sigma_g^*,\sigma_0)$, their actions on $\mathbb{H}^+$ as deck transformations satisfy:
    \begin{equation*}
        \pi(\beta\cdot \gamma\cdot\zeta_0)=\pi (\beta\cdot \zeta_0)
    \end{equation*}
Consequently, there exists $\gamma'\in \pi_1(\Sigma_g^*,\sigma_0)$, such that
\begin{equation*}
    \beta\cdot \gamma\cdot \zeta=\gamma'\cdot \beta \cdot \zeta,
    \forall \zeta\in \mathbb{H}^+
\end{equation*}
\end{lemma}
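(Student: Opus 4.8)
The plan is to unwind the definitions of the various covering maps and use the fact that $\pi$ factors through the (multi-valued) period map. Recall that $\tilde{\omega}\colon \mathbb{H}^+\to \mathbb{H}^+_*$ satisfies $j\circ\tilde\omega = J\circ\pi$, so $\tilde\omega$ can be viewed as the map of universal covers lifting $\omega\colon \Sigma_g^*\to \mathbb{H}^+_*/PSL_2(\mathbb{Z})$; in particular $\tilde\omega$ is equivariant for the deck action in the sense encoded in the commutative square right after the definition of $\rho$: for $\gamma\in\pi_1(\Sigma_g^*,\sigma_0)$ one has $\tilde\omega(\gamma\cdot\zeta)=\rho(\gamma)\cdot\tilde\omega(\zeta)$. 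The key structural fact is that a loop $\beta\in\pi_1(\mathbb{H}^1_*,\xi_0)$ acts on $\mathbb{H}^+$ as a deck transformation of $\pi$ precisely because its image $P_*\beta\in PSL_2(\mathbb{Z})$ fixes the $\omega$-fiber: $\beta$ permutes the sheets of $\omega\colon\Sigma_g^*\to\mathbb{H}^+_*/PSL_2(\mathbb{Z})$ over the point $\omega(\sigma_0)$, hence descends to a self-map of $\Sigma_g^*$ fixing $\sigma_0$, i.e. $\beta\in\pi_1(\Sigma_g^*,\sigma_0)$ as a deck transformation. This inclusion $\pi_1(\mathbb{H}^1_*,\xi_0)\hookrightarrow \pi_1(\Sigma_g^*,\sigma_0)$ (as subgroups of $\mathrm{Deck}(\pi)$) is really the content of the lemma.

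First I would make this inclusion precise: identify $\mathrm{Deck}(\pi)$ with $\pi_1(\Sigma_g^*,\sigma_0)$ and observe that, since $\tilde\omega$ is $\mathrm{Deck}(\pi)$-equivariant with $\rho$ landing in $SL_2(\mathbb{Z})$, and since $\beta$ was chosen to fix $\xi_0$ up to the $\omega$-monodromy, the corresponding deck transformation of $\pi$ — call it again $\beta$ — lies in $\pi_1(\Sigma_g^*,\sigma_0)$. Then the first assertion $\pi(\beta\cdot\gamma\cdot\zeta_0)=\pi(\beta\cdot\zeta_0)$ is immediate: $\gamma$ is a deck transformation of $\pi$, so $\pi(\gamma\cdot\eta)=\pi(\eta)$ for every $\eta\in\mathbb{H}^+$, and applying this with $\eta=\beta\cdot\zeta_0$ (after noting $\beta\cdot\gamma=(\beta\gamma\beta^{-1})\cdot\beta$ and that $\beta\gamma\beta^{-1}$ is again a deck transformation of $\pi$) gives the claim.

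For the consequence, set $\gamma' := \beta\,\gamma\,\beta^{-1}$. Since $\gamma\in\pi_1(\Sigma_g^*,\sigma_0)=\mathrm{Deck}(\pi)$ is normal-conjugated by $\beta\in\mathrm{Deck}(\pi)$ — here I use that $\beta$ itself normalizes $\mathrm{Deck}(\pi)$ inside $\mathrm{Homeo}(\mathbb{H}^+)$, which holds because $\beta$ came from a deck transformation of $\pi$ — we get $\gamma'\in\pi_1(\Sigma_g^*,\sigma_0)$, and $\beta\cdot\gamma = \gamma'\cdot\beta$ as maps $\mathbb{H}^+\to\mathbb{H}^+$, i.e. $\beta\cdot\gamma\cdot\zeta=\gamma'\cdot\beta\cdot\zeta$ for all $\zeta$. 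The one point that needs care, and which I expect to be the main obstacle, is justifying rigorously that a loop $\beta$ in $\mathbb{H}^1_*$ really does induce a deck transformation of $\pi\colon\mathbb{H}^+\to\Sigma_g^*$ — i.e. that $\beta\in\mathrm{Deck}(\pi)$, not merely an element of $PSL_2(\mathbb{Z})$ normalizing the image. This requires tracing through how $\mathbb{H}^+$ sits simultaneously over $\Sigma_g^*$ (via $\pi$) and over $\mathbb{H}^+_*$ (via $\tilde\omega$), using that $\omega\colon\Sigma_g^*\to\mathbb{H}^+_*/PSL_2(\mathbb{Z})$ is an unbranched $d$-sheeted covering so that $\pi_1(\Sigma_g^*)$ surjects appropriately, and that the covering $\mathbb{H}^+\to\Sigma_g^*$ is the common refinement; once this identification is set up cleanly the conjugation argument is routine. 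I would therefore devote the bulk of the write-up to setting up the two covering structures and the equivariance of $\tilde\omega$, after which both displayed conclusions drop out formally.
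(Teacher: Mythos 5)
Your argument rests entirely on the claim that a deck transformation $\beta$ of $\tilde{\omega}\colon\mathbb{H}^+\to\mathbb{H}^+_*$ is itself a deck transformation of $\pi\colon\mathbb{H}^+\to\Sigma_g^*$ (equivalently, that $j_*\pi_1(\mathbb{H}^+_*,\xi_0)\subseteq J_*\pi_1(\Sigma_g^*,\sigma_0)$ inside $\pi_1(\mathbb{P}^1_*,s_0)$), together with the weaker claim that $\beta$ at least normalizes $\mathrm{Deck}(\pi)$. The first claim is too strong and cannot be right: it would force $\pi(\beta\cdot\zeta_0)=\sigma_0$ for every $\beta$, i.e.\ every loop in $\mathbb{P}^1_*$ that lifts to a loop in $\mathbb{H}^+_*$ would also lift to a loop in $\Sigma_g^*$ --- a strong constraint on the permutation monodromy of the degree-$d$ covering $J$ that is nowhere assumed and fails in general (note that $j_*\pi_1(\mathbb{H}^+_*,\xi_0)$ is the kernel of $\pi_1(\mathbb{P}^1_*,s_0)\to PSL_2(\mathbb{Z})$, and there is no reason for that kernel to stabilize $\sigma_0$ in the fibre $J^{-1}(s_0)$); the lemma is deliberately phrased with $\pi(\beta\cdot\zeta_0)$ rather than $\sigma_0$ on the right-hand side precisely because these differ. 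The second claim, $\beta\gamma\beta^{-1}\in\mathrm{Deck}(\pi)$, is not ``routine'': since the deck action on $\mathbb{H}^+$ is free, $\pi(\beta\gamma\zeta_0)=\pi(\beta\zeta_0)$ holds \emph{if and only if} $\beta\gamma\beta^{-1}\in\mathrm{Deck}(\pi)$, so this normalization statement is exactly the content of the lemma. Justifying it by saying it ``holds because $\beta$ came from a deck transformation of $\pi$'' assumes the conclusion. Your closing paragraph correctly flags this as the main obstacle, but supplies no argument, and the route you indicate (establishing $\beta\in\mathrm{Deck}(\pi)$) cannot succeed.

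The paper's proof takes a genuinely different and more substantive route: it never tries to place $\beta$ inside $\mathrm{Deck}(\pi)$. Instead it realizes $\pi(\beta\cdot\gamma\cdot\zeta_0)$ as the endpoint of the lift to $(\Sigma_g^*,\sigma_0)$ of the concatenated loop $(j\circ\beta)*(J\circ\gamma)$ in $\mathbb{P}^1_*$, thereby reducing the first display to the single assertion that the monodromy of the loop $J\circ\gamma$ fixes the fibre point $(j\circ\beta)\cdot\sigma_0=\pi(\beta\cdot\zeta_0)$; the second display then follows by unique lifting, since $\beta\gamma$ and $\gamma'\beta$ are both deck transformations of $J\circ\pi=j\circ\tilde{\omega}$ agreeing at $\zeta_0$. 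All of the content of the lemma is concentrated in that one monodromy statement about the action of $\pi_1(\mathbb{P}^1_*,s_0)$ on $J^{-1}(s_0)$, which your write-up never reaches. To repair the proposal you would have to prove that statement directly, rather than postulate any containment or normalization relation between the two deck groups.
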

\begin{proof}
    Let $\tilde{\gamma},\tilde{\beta}$ be lifts of $\gamma,\beta$
to $\mathbb{H^+}$ with $\tilde{\gamma}(0)=\tilde{\beta}(0)=\zeta_0$,
let $\alpha=\tilde{\omega}\circ \tilde{\gamma}$, lifts to $\tilde{\alpha}$ on $\mathbb{H}^+$ with $\tilde{\alpha}(0)=\tilde{\beta}(1)$, then:
\begin{equation*}
    \beta\cdot \gamma\cdot \zeta_0=\tilde{\alpha}(1)
\end{equation*}
$J\circ\pi\circ  \tilde{\beta}$ and $J\circ\pi\circ (\tilde{\beta}*\tilde{\alpha})$
are both loops on $\mathbb{P}^1_*$ based on $s_0=J(\sigma_0)$, furthermore, by $j\circ \tilde{\omega}=J\circ \pi$, we have:
\begin{equation*}
    \begin{split}
        J\circ \pi\circ \tilde{\beta}&=j\circ \tilde{\omega}\circ \tilde{\beta}=j\circ \beta\\
        J\circ\pi\circ (\tilde{\beta}*\tilde{\alpha})&=(j\circ \tilde{\omega}\circ \tilde{\beta})*(j\circ \tilde{\omega}\circ\tilde{\alpha})\\
        &=(j\circ \beta)*(j\circ \tilde{\omega}\circ \tilde{\gamma})\\
        &=(j\circ \beta)*(J\circ \gamma)
    \end{split}
\end{equation*}
since $J\circ \gamma\in \pi_1(\mathbb{P}^1_*,s_0)$ corresponds to the identity deck transformation of the covering $J:(\Sigma_g^*,\sigma_0)
\rightarrow (\mathbb{P}^1_*,s_0)$, we have:
\begin{equation*}
\begin{split}
    \pi(\beta\cdot\gamma\cdot \zeta_0)&=(J\circ\pi\circ (\tilde{\beta}*\tilde{\alpha}))\cdot \sigma_0\\
    &=((j\circ \beta)*(J\circ \gamma))\cdot \sigma_0\\
    &=(J\circ\gamma)\cdot ((j\circ\beta)\cdot \sigma_0)\\
    &=(j\circ \beta)\cdot \sigma_0\\
    &=(J\circ\pi\circ  \tilde{\beta})\cdot \sigma_0\\
    &=\pi(\beta\cdot \zeta_0)
\end{split}
\end{equation*}
as a result, there exists $\gamma'\in \pi_1(\Sigma_g^*,\sigma_0)$, such that
$\beta\cdot \gamma\cdot \zeta_0=\gamma'\cdot \beta\cdot \zeta_0$, since
$\beta\cdot \gamma$ and $\tilde{\gamma}\cdot \beta$ can be treated as deck transformation of $j\circ\tilde{\omega}=J\circ\pi$, so
\begin{equation*}
     \beta\cdot \gamma\cdot \zeta=\gamma'\cdot \beta \cdot \zeta,
    \forall \zeta\in \mathbb{H}^+
\end{equation*}
\end{proof}

\begin{lemma}\label{strongversion}
    If $\eta'$ is a semi-flat cscK metric of $\mathbb{H}^+_*\times_{Id}\mathbb{C}$
    with fiber volume $1$ and scalar curvature $-3$, which is also $\begin{pmatrix}
        1 & b\\
        0 & 1
    \end{pmatrix}$-invariant and $\begin{pmatrix}
        0 & 1\\
        -1 & 0
    \end{pmatrix}$-invariant, then
    \begin{equation*}
        \eta'=\eta=\sqrt{-1}\partial\bar{\partial}\left(
        \log \frac{1}{v^2}+\frac{y^2}{v}
        \right)
    \end{equation*}
where $(u+\sqrt{-1}v,x+\sqrt{-1}y)\in \mathbb{H}^+_*\times \mathbb{C}$.
Here, $\begin{pmatrix}
    0 & 1\\
    -1 & 0
\end{pmatrix}$ can be replaced as $\pm \begin{pmatrix}
    1 & 1\\
    -1 & 0
\end{pmatrix}$ or $\pm\begin{pmatrix}
    0 & -1\\
    1 & 1
\end{pmatrix}$.
\end{lemma}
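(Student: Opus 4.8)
The plan is to follow the proof of Theorem~\ref{universal-rigidity} almost line by line, the three differences being that the single translation $\begin{pmatrix}1&b\\0&1\end{pmatrix}$ replaces the group of all integer translations, that one elliptic generator replaces $S$, and that the base is now the punctured $\mathbb{H}^+_*$. Since $\eta'$ is semi-flat with fiber volume $1$ and the lattice action in the fibre direction of $\mathbb{H}^+_*\times_{Id}\mathbb{C}$ is the standard one, the computation preceding Lemma~\ref{G.I} (leading to \eqref{ABCD}) applies unchanged: $\eta'$ has the form \eqref{eta} with coefficients \eqref{ABCD} for some smooth functions $\alpha,\beta$ on $\mathbb{H}^+_*$ with $\partial\bar\beta/\partial\zeta=\frac{\sqrt{-1}}{2v}(\beta+\bar\beta)$, and \eqref{csc} with $s\equiv-3$ becomes $\triangle_\zeta\log\big(\frac{\alpha}{v}-|\beta|^2\big)=3(\alpha-v|\beta|^2)$.

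First I would kill $\beta$ using the elliptic generator $A_0$, one of $\begin{pmatrix}0&1\\-1&0\end{pmatrix}$, $\pm\begin{pmatrix}1&1\\-1&0\end{pmatrix}$, $\pm\begin{pmatrix}0&-1\\1&1\end{pmatrix}$, each of which has finite order in $PSL_2(\mathbb{Z})$ and fixes a point of $\mathbb{H}^+$. Writing $A_0=\begin{pmatrix}a&b_0\\c&d\end{pmatrix}$, compare the coefficients of $d\zeta\wedge d\bar\zeta$ on the two sides of $h_{\pm A_0}^*\eta'=\eta'$: exactly as in the proof of Theorem~\ref{universal-rigidity} (whose computation is precisely the case $A_0=S$), putting $z=0$ gives $\alpha(A_0\cdot\zeta)=|c\zeta+d|^4\,\alpha(\zeta)$, and the relation that remains then forces $\beta\equiv0$. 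For the three generators other than $S$ this is the same computation with the corresponding Jacobian, and it is the longest — though entirely mechanical — step. Set $f:=v^2\alpha>0$; combining $\alpha(A_0\cdot\zeta)=|c\zeta+d|^4\alpha(\zeta)$ with $\mathrm{Im}(A_0\cdot\zeta)=v/|c\zeta+d|^2$ gives $f(A_0\cdot\zeta)=f(\zeta)$, while with $\beta=0$ the curvature equation collapses to $\triangle_\zeta\log f=3(f-1)/v^2$ on $\mathbb{H}^+_*$.

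Next comes the volume argument. The $\begin{pmatrix}1&b\\0&1\end{pmatrix}$-invariance gives $f(\zeta+b)=f(\zeta)$, so $f$ is $b$-periodic in $u$, and I would run the proof of Theorem~\ref{universal-rigidity} with the width-$b$ rectangle $D_{uv}(\epsilon)=[u,u+b]\times[v,v+\epsilon]$. The only new point is the punctures: the imaginary parts of the points of $j^{-1}(S)\cap\mathbb{H}^+$ form a discrete subset of $(0,\infty)$ whose sole accumulation point is $0$ — indeed $\mathrm{Im}(\gamma\zeta_0)=\mathrm{Im}(\zeta_0)/|c\zeta_0+d|^2$, and a positive-definite binary quadratic form takes discrete values on $\mathbb{Z}^2$ — so for generic $v$ and all small enough $\epsilon$ the closed rectangle $\overline{D_{uv}(\epsilon)}$ misses $j^{-1}(S)$ for every $u$. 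For such $v,\epsilon$ the volume $V(u,v,\epsilon):=\mathrm{vol}_{\eta'}\big(D_{uv}(\epsilon)\times_{Id}\mathbb{C}\big)=\int_{D_{uv}(\epsilon)}\frac{f}{v^2}\,du\,dv$ is finite and independent of $u$ (the rectangle spans a full $b$-period and $f$ is $b$-periodic), while the divergence-theorem computation of Theorem~\ref{universal-rigidity} yields $\partial_u V=\frac23\int_v^{v+\epsilon}\partial_u^2\log f(u,\hat v)\,d\hat v$; hence $\partial_u^2\log f\equiv0$, first on a dense set of heights and then, by smoothness, on all of $\mathbb{H}^+_*$. Along any horizontal line missing $j^{-1}(S)$, $\log f$ is affine and $b$-periodic in $u$, hence constant in $u$, so $f=f(v)$ throughout by continuity.

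Finally, with $f=f(v)$ the identity $f(A_0\cdot\zeta)=f(\zeta)$ reads $f\big(v/((u+k)^2+v^2)\big)=f(v)$ for the relevant $k\in\{0,1\}$; as it holds for all $u$, replacing $u$ by $u-k$ gives $f\big(v/(u^2+v^2)\big)=f(v)$ for all $u$. Putting $u=0$ gives $f(1/v)=f(v)$, and $u=\kappa v$ gives $f\big(1/((\kappa^2+1)v)\big)=f(v)$, whence $f((\kappa^2+1)v)=f(v)$ for every $\kappa\in\mathbb{R}$; therefore $f$ is constant on $(0,\infty)$, and $\triangle_\zeta\log f=3(f-1)/v^2$ then forces $f\equiv1$. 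Thus $\alpha=1/v^2$, $\beta=0$, and $\eta'=\sqrt{-1}\partial\bar\partial\big(\log\frac{1}{v^2}+\frac{y^2}{v}\big)=\eta$. The one genuinely new ingredient beyond Theorem~\ref{universal-rigidity} is the treatment of the removed fibres — using discreteness of $j^{-1}(S)$ to keep the rectangles and horizontal lines clear of the punctures and then passing from dense sets to all of $\mathbb{H}^+_*$ by continuity; I expect this, together with the $\beta\equiv0$ verification for the extra elliptic generators, to be where the care is needed, while the rest is the earlier argument with $1$ replaced by $b$ and $S$ by $A_0$.
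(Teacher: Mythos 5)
Your proposal is correct and follows essentially the same route as the paper's own proof: reduce to the normal form with a single unknown function $f=v^2\alpha$ via the computations of Theorem~\ref{universal-rigidity}, run the period-rectangle volume argument on rectangles avoiding the punctures (whose heights form a discrete set) to get $f=f(v)$, and then use the elliptic-generator invariance plus the scaling trick and the curvature equation to force $f\equiv 1$. You supply more detail than the paper does on the $\beta\equiv 0$ step for the alternative elliptic generators and on the shift $u\mapsto u-k$, but these are elaborations of the same argument rather than a different one.
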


\begin{proof}
    Let $V:=\{v_n\}_{n=1}^{\infty},v_n>v_{n+1}$ be a positive series such that
\begin{equation*}
    \mathbb{H}^+-\bigcup\limits_{n=1}^{\infty}\{\zeta\in\mathbb{H}^+|\text{Im}\zeta=v_n\}\subset \mathbb{H}^+_*
\end{equation*}
   following the proof of the theorem \ref{universal-rigidity}, there exists $f=f(u,v)>0$, such that:
    \begin{equation*}
        \eta'=\frac{\sqrt{-1}}{2}\left(\left(\frac{f(u,v)}{v^2}+\frac{y^2}{v^3}\right)d\zeta\wedge d\bar{\zeta}
    -\frac{y}{v^2}d\zeta\wedge d\bar{z}-\frac{y}{v^2}dz\wedge d\bar{\zeta}
    +\frac{1}{v}dz\wedge d\bar{z}
    \right)
    \end{equation*}
and $f_u(u,v)=0,\forall v\notin V$, the smoothness
of $f$ ensure that  $f_u(u,v)=0,\forall (u,v)\in \mathbb{H}^+_*$, therefore
\begin{equation}
    \frac{\partial f}{\partial u}(u,v)=f(v),\forall (u,v)
\in \mathbb{H}^+_*
\end{equation}
The scalar curvature condition reads:
\begin{equation*}
    \frac{d^2}{dv^2}\log f=3v^{-2}(f-1)
\end{equation*}
the $\begin{pmatrix}
        0 & 1\\
        -1 & 0
    \end{pmatrix}$-invariant of $\eta'$ shows
\begin{equation*}
    f(v)=f(v/(u^2+v^2)), \forall (u,v)\in \mathbb{H}^+_{*}
\end{equation*}
 which implies $f=constant=1$.
\end{proof}

\begin{proof}[Proof of the theorem \ref{uniqueness}]
The existence is guaranteed by construction \ref{etaX} together with proposition \ref{HXinvariant}. Now we turn to the uniqueness part.
   Let $\eta'$ be a given $H(X,\Phi,s)$-invariant semi-flat cscK metric with fiber volume $1$ and scalar curvature $-3$, let $\bar{\eta}':=P^*\eta'$,
   $\forall \beta\in \pi_1(H^+_*,\xi_0)$, let
   \begin{equation*}
   \begin{split}
        h^{\pm}_{\beta}:H^+\times_{\tilde{\omega}}\mathbb{C}&\rightarrow H^+\times_{\tilde{\omega}}\mathbb{C}\\
       [\zeta,z]&\rightarrow[\beta\cdot\zeta,\pm z]
   \end{split}
   \end{equation*}
we claim that $h_{\beta}^{\pm}$ can be projected to $\underline{h^{\pm}_{\beta}}\in Aut(X^*)$ such that:
\begin{equation*}
    \underline{h^{\pm}_{\beta}}\circ P=P\circ \underline{h^{\pm}_{\beta}}
\end{equation*}

\begin{equation*}
    \xymatrix{
   \ & H^+\times_{Id}\mathbb{C} &\ \\
   H^+\times_{\tilde{\omega}}\mathbb{C}\ar[ru]^{h_{\tilde{\omega}}}
   \ar[rr]^{h_{\beta}^{\pm}} \ar[d]_{P}&\ & H^+\times_{\tilde{\omega}}\mathbb{C}\ar[d]^{P}\ar[lu]_{h_{\tilde{\omega}}}\\
    X^*\ar@{-->}[rr]^{\underline{h_{\beta}^{\pm}}}\ar[d]_{\pi} & \ & X^*\ar[d]^{\pi}\\
    \Sigma_g^*\ar@{-->}[rr]^{\underline{\beta}} & \ & \Sigma_g^*
    }
\end{equation*}
to show this, we need to show $h^{\pm}_{\beta}([\gamma\cdot \zeta,\frac{z}{c_\gamma\tilde{\omega}(\zeta)+d_{\gamma}}])=[\beta\cdot\gamma\cdot\zeta,\frac{\pm z}{c_\gamma\tilde{\omega}(\zeta)+d_{\gamma}}]$ is congruent to $[\beta\cdot \zeta,\pm z]$, $\forall\gamma\in \pi_1(\Sigma_g^*,\sigma_0)$, where
\begin{equation*}
    \rho(\gamma)=\begin{pmatrix}
    a_{\gamma} & b_{\gamma}\\
    c_{\gamma} &d_{\gamma}
    \end{pmatrix}
\end{equation*}

 by the lemma \ref{orbits-invariant}, we can find $\gamma'\in \pi_1(\Sigma_g^*,\sigma_0)$, such that
\begin{equation*}
    \gamma'\cdot \beta\cdot \zeta=\beta\cdot \gamma\cdot \zeta
\end{equation*}
since
\begin{equation*}
    \tilde{\omega}(\gamma'\cdot \beta\cdot \zeta)=\tilde{\omega}(\beta\cdot \gamma\cdot \zeta)=\tilde{\omega}(\gamma\cdot \zeta)=\frac{a_{\gamma}\tilde{\omega}(\zeta)+b_{\gamma}}{c_{\gamma}\tilde{\omega}(\zeta)+d_{\gamma}}
    =\frac{a_{\gamma}\tilde{\omega}(\beta\cdot \zeta)+b_{\gamma}}{c_{\gamma}\tilde{\omega}(\beta\cdot \zeta)+d_{\gamma}}
\end{equation*}
we have $\rho(\gamma')=\pm \rho(\gamma)$, therefore:
\begin{equation*}
\begin{split}
        \gamma'\cdot [\beta\cdot \zeta,z]&=\left[\gamma'\cdot \beta\cdot \zeta,
    \frac{\pm z}{c_{\gamma}\tilde{\omega}(\beta\cdot \zeta)+d_{\gamma}}
    \right]\\
    &=\left[\beta\cdot\gamma\cdot\zeta,\frac{\pm z}{c_{\gamma}\tilde{\omega}(\zeta)+d_{\gamma}}\right]
\end{split}
\end{equation*}
as desired. The lemma \ref{orbits-invariant} also allows us to project $\beta$ to $\underline{\beta}\in Aut(\Sigma_g^*)$ with $\pi \circ \underline{h_{\beta}}
=\underline{\beta}\circ \pi$. Since $h_{\beta}$ preserve the zero section of $H^+\times_{\tilde{\omega}}\mathbb{C}$, the induced map $\underline{h_{\beta}}$ preserves
the section $s$. Furthermore, since $X$ is minimal with Kodaira dimension $\kappa(X)\ge 0$, we conclude $\underline{h_{\beta}}\in Aut(X)$, and the Riemann removable singularity theorem ensures $\underline{\beta}\in Aut(\Sigma_g)$, as a result:
\begin{equation*}
    (\underline{h_{\beta}},\underline{\beta})\in H(X,\Phi,s)
\end{equation*}
since $\eta'$ is $H(X,\Phi,s)$-invariant, so $\underline{h_{\beta}}^*\eta'=\eta'$, therefore $h_{\beta}^*(\bar{\eta}')=\bar{\eta}'$, and $\bar{\eta}'$ inherits to a $\rho(\pi_1(\Sigma_g^*))$-invariant semi-flat cscK metric $\tilde{\eta}'$ on
$H^+_*\times_{Id}\mathbb{C}$, such that
\begin{equation*}
    \bar{\eta}'=h_{\tilde{\omega}}^*(\tilde{\eta}')
\end{equation*}
since $X$ possesses at least one singular fiber $X_p$ with $J(p)\ne \infty$
so $\rho(\pi_1(\Sigma_g^*,\sigma_0))$ containing element $A$ conjugate with
$\begin{pmatrix}
    1 &\pm b\\
    0 & 1
\end{pmatrix}$ and element $B$ conjugate with one of
\begin{equation*}
    \left\{\pm \begin{pmatrix}
        0 & 1\\
        -1 & 0
    \end{pmatrix},\pm \begin{pmatrix}
        1 & 1\\
        -1 & 0
    \end{pmatrix},\pm \begin{pmatrix}
        0 & -1\\
        1 & 1
    \end{pmatrix}\right\}
\end{equation*}
and the lemma \ref{strongversion} implies $\tilde{\eta}'=\eta$ where
$\eta$ is given by (\ref{SF-cscK}). so $\eta'=\eta_X$.
\end{proof}


\section{Current extension}
In this section, we focus on the minimal elliptic surface
$\Phi:X\rightarrow \Sigma_g$ with globally defined holomorphic section,
and $\eta_X$ be the semi-flat cscK metric constructed in the section 2, and we try to extend $\eta_X$ as a closed positive $(1,1)$-current on the whole space $X$.
\subsection{Current extension on $X\setminus D_{\infty}$}
Let $D_{\infty}=\bigcup\limits_{J(p)=\infty}X_p$, and set $\check{X}:=X\setminus D_{\infty}$,  in this subsection, we will extend the semi-flat cscK metrci $\eta_X$ from
$X^*$ to $\check{X}$ as a positive closed $(1,1)$-current with vanishing Lelong number
everywhere.

Recall in the section 1.2.1, $\forall p\in \mathcal{P}':= \mathcal{P}\setminus J^{-1}(\infty)$, we have:
\begin{equation*}
    \Phi^{-1}(B^*_{\delta}(p))\cong (B^*_{\delta^{1/h_p}}(0)\times_{\omega_p}\mathbb{C})/\mathbb{Z}_{h_p}
\end{equation*}
Let $X^{orb}$ be the orbiford obtained by replacing $\Phi^{-1}(B_{\delta}(p))$ with $B_{\delta^{1/h_p}}(0)\times_{\omega_p}\mathbb{C}/\mathbb{Z}_{h_p}$, let $X'$ be the canonical reduction of $X^{orb}$ (see details in \cite{Ko63}), admitting a reduction map $R:X'\rightarrow \check{X}$, which obtianed by constracting divisors of $X'$ with self intersection number $-1$.

\begin{equation*}
    \xymatrix{
    X'\ar[rr]^{R'}\ar[d]_R \ar[rrd]^{\Phi'}&\ & X^{orb}\ar[d]^{\Phi^{orb}} \\
    \check{X}\ar[rr]^{\Phi} &\ & \Sigma_g\setminus J^{-1}(\infty)
    }
\end{equation*}

\begin{definition}
    Let $C$ be an regular irreducible curve of $X$, a closed (1,1)-current $\nu$ of $X$ is called $C^{\alpha}$-vanishing along $C$ ($\alpha>0$) if $\forall x\in C$, $\exists $ local coordinate $U_x,(s,t)$ with $C\cap U_x=\{s=0\}$ and a function
    $\phi_x\in C^{\infty}(U_x\setminus C)\cap C^0(U_x)$ such that
    on $U_x$:
    \begin{equation*}
    \begin{split}
        \phi_x&=O(|s|^{\alpha})\\
         \nu&=\sqrt{-1}\partial\bar{\partial}\phi_x
    \end{split}
    \end{equation*}
\end{definition}

\begin{lemma}\label{extension1}
     $\eta_X$ extend to a closed positive $(1,1)$-current
     on $X'$ which is $C^{\alpha}$-vaninshing or smooth along each irreducible components of $X'_p$, $\forall p\in \mathcal{P}'$.
\end{lemma}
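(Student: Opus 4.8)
The plan is to work locally near a singular fiber $X_p$ with $J(p)\ne\infty$ and to use the explicit uniformization of $\Phi^{-1}(B^*_\delta(p))$ recorded in (\ref{N-L-M}). Over the punctured disk the universal cover of the orbifold chart is $B^*_{\delta^{1/h_p}}(0)\times_{\omega_p}\mathbb C$, and by (\ref{etaX}) the metric $\eta_X$ pulls back there to $h^*_{\tilde\omega}\eta$, whose potential in the coordinates $(\tau,z)$, $\tau=re^{\sqrt{-1}\theta}$, reads (from (\ref{SF-cscK}))
\begin{equation*}
\varphi=\log\frac{1}{(\operatorname{Im}\omega_p(\tau))^2}+\frac{(\operatorname{Im}(z/\cdots))^2}{\operatorname{Im}\omega_p(\tau)},
\end{equation*}
after the relevant $SL_2(\mathbb Z)$-twist in the $z$-variable (the $g_p$-column of the table). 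The first step is to substitute the explicit formulas for $\omega_p(\tau)$ from the table of Section 1.2.1 and determine the asymptotics of $\operatorname{Im}\omega_p(\tau)$ as $\tau\to 0$. In every case with $J(p)\ne\infty$ the period $\omega_p$ extends holomorphically across $\tau=0$ with $\operatorname{Im}\omega_p(0)>0$ (its value is $\omega_0$, or $\eta$, or $i$ according to $J(p)=$ generic, $0$, $1$), so $\log(\operatorname{Im}\omega_p)^{-2}$ is a bounded smooth function of $(\tau,\bar\tau)$ near $\tau=0$; likewise the fiber term is bounded. Hence the potential is continuous up to the fiber, which already gives a closed positive $(1,1)$-current extension with vanishing Lelong numbers by the standard Bedford–Taylor/plurisubharmonic extension. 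The finer claim is the $C^\alpha$-vanishing along each component of $X'_p$.

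The second step is to pass from $B_{\delta^{1/h_p}}(0)\times_{\omega_p}\mathbb C$ to the $\mathbb Z_{h_p}$-quotient and then to the minimal model $X'_p$. Here I would use Kodaira's explicit description of the singular fiber $X'_p$ as a configuration of rational curves, together with the fixed-point data of the $g_p$-action in the table. A component of $X'_p$ is either the image of $\{\tau=0\}$ or arises in the resolution of the $\mathbb Z_{h_p}$-quotient singularities; near a generic point of such a component one has a local coordinate $(s,t)$ with the component $\{s=0\}$, and $s$ is a monomial $\tau^a(z-\text{lattice point})^b$ (or $\tau^a$ on the central component), with $(a,b)$ read off from the weights of $g_p$ on $(\tau,z)$. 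Substituting this monomial change of variables into the bounded potential $\varphi$ above and tracking the dominant term, one finds $\varphi=\varphi(0)+O(|s|^{\alpha})$ for an explicit $\alpha>0$ depending on the weights — in fact $\varphi-\varphi(0)$ is a sum of terms each carrying a positive power of $r=|\tau|$, and $|\tau|$ is a positive rational power of $|s|$ on the resolution. On the components where this constant term is nonzero one must subtract it (it is pluriharmonic, being constant) to see the $\sqrt{-1}\partial\bar\partial$-potential genuinely vanish; on components where the potential is actually smooth across the fiber (this happens for the ``transversal'' components met by the zero-section in the generic and $I_0^*$ cases) the lemma's ``or smooth'' alternative applies.

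The main obstacle I anticipate is purely bookkeeping: there are seven fiber types in the $J(p)\ne\infty$ table, each with its own $\omega_p(\tau)$, its own $g_p$-action, and its own Kodaira configuration $X'_p$, and one must check case by case that after resolving the cyclic quotient singularity the monomial relating $\tau$ to the local defining function of each component carries a positive exponent, so that boundedness of $\varphi$ upgrades to a genuine $|s|^\alpha$ decay. I would organize this by first treating the ``multiplicative-type'' cases ($I_0^*$, and the generic/regular entry) where $h_p\in\{1,2\}$ and the quotient is étale or a single $A_1$-point, and then the ``potentially-good-reduction'' cases of type $\mathrm{II},\mathrm{II}^*,\mathrm{III},\mathrm{III}^*,\mathrm{IV},\mathrm{IV}^*$, where $\omega_p$ is the explicit Möbius-in-$\tau^{d_p}$ expression, $\operatorname{Im}\omega_p(0)$ equals $\operatorname{Im}\eta$ or $\operatorname{Im}i=1$, and the resolution of the $\mathbb Z_{h_p}$-action introduces the chain of $(-2)$-curves recorded by Kodaira; in each case a single explicit computation of the leading exponent finishes the argument. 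The closedness and positivity of the extended current on all of $X'$ then follow formally, since a uniform continuous (indeed Hölder-with-constant) local potential extends the Kähler form across a divisor as a closed positive current.
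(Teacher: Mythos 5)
Your proposal follows essentially the same route as the paper: both use the local orbifold uniformization $(B^*_{\delta^{1/h_p}}(0)\times_{\omega_p}\mathbb{C})/\mathbb{Z}_{h_p}$, the fact that $\omega_p(\tau)$ extends across $\tau=0$ with $\operatorname{Im}\omega_p(0)>0$ to bound the K\"ahler potential, and a case-by-case monomial coordinate computation on the canonical resolution to extract the $C^{\alpha}$ decay after subtracting the constant term. The only detail the paper makes explicit that you leave implicit is that the potential must first be averaged over the $\mathbb{Z}_{h_p}$-action (e.g.\ $\phi_{P,i}=\frac{1}{h_p}\sum_{k}g^{k*}\psi_P$) so that it descends to a single-valued function on the quotient chart; since this changes the potential only by a pluriharmonic function, it does not affect the current.
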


\begin{proof}
Recall $\psi(\zeta,z)=-2\log v+y^2/v$ is the K\"ahler potential of $\eta$
on $\mathbb{H}^+\times\mathbb{C}$, and $\eta_X=h_{\omega_p}^*\eta$. Let $\psi_P:=h_{\omega_p}^*\psi$.

    \textbf{(1) The case $X_p$ is regular.} In this case, $X'_p=X_p,h_p=1$ and
\begin{equation*}
\begin{split}
        \omega_p(\tau)=\tau^{d_p}+\omega_0&,d_p\ge 2,\omega_0=u_0+\sqrt{-1}v_0,v_0>0\\
        \Phi^{-1}(B_{\delta}(0))&=B_{\delta}(0)\times_{\omega_p}\mathbb{C}
\end{split}
\end{equation*}
let $U_P=\{(\tau,z)| |\tau|<\epsilon_P,|z-z_0|<\delta_P,\epsilon_P,\delta_P<<1\}$, $P:(0,z_0)$, we have
\begin{equation*}
\begin{split}
    \phi_P(\tau,\bar{\tau},z,\bar{z}):=\psi_P=-2\log \omega_p(\tau) +\frac{(Im\ z)^2}{Im\ \omega_p(\tau)}\\
\end{split}
\end{equation*}
therefore $\Phi_P\in C^{\infty}(U_P)$, since $h_{\omega_P}$ holomorphic on $U_P\setminus X_P'$, we have that on $U_P\setminus X_P'$:
\begin{equation*}
    \sqrt{-1}\partial\bar{\partial}\phi_P=\sqrt{-1}\partial\bar{\partial}
    h_{\omega_P}^*\psi= h_{\omega_P}^*\sqrt{-1}\partial\bar{\partial}\psi=\eta_X
\end{equation*}
therefore: $\eta_X$ admits a smooth extension along $X_p$.

\textbf{(2) The case $X_p$ is of type $I_0^*$}. In this case $h_p=2$,
$\Phi'^{-1}(B_\delta(p))$ is the canonical reduction of
$B_{\delta^{1/2}}(0)\times_{\omega_p}\mathbb{C}/Z_2$, and
\begin{equation*}
    \begin{split}
\omega_p(\tau)&=\tau^{2d_p}+\omega_0,\omega_0=u_0+\sqrt{-1}v_0,v_0>0\\
        X'_p=X_p&=2\Theta+\Theta_1+\Theta_2+\Theta_3+\Theta_4\\
        \Theta_i\cdot\Theta&=o_i,(\Theta)^2=(\Theta_i)^2=-2\\
    \end{split}
\end{equation*}

There are four fixed points on $0\times_{\omega_0} \mathbb{C}$, say
$[0,0], [0,\omega_0/2],[0,1/2],[0,(\omega_0+1)/2]$, and $\Theta_i,1\le i\le 4$
are obtained by canonical reduction over these points. Let
\begin{equation*}
\begin{split}
     z_1=z,\ z_2=z-\frac{\omega_p(\tau)}{2},\  z_3&=z-\frac{1}{2}\ z_4=z-\frac{\omega_p(\tau)+1}{2}\\
     \alpha_i=\tau^2,\ \beta_i&=z_i^2,\ 1\le i\le 4\\
     p_i=\frac{\tau}{z_i},\ q_i&=\frac{z_i}{\tau},\ 1\le i\le 4
\end{split}
\end{equation*}
then the $Z_2$ action on $(\tau,z_i)$ is generated by:
\begin{equation*}
    g:(\tau,z_i)\rightarrow(-\tau,-z_i)
\end{equation*}

\begin{equation*}
     \begin{tikzpicture}
\draw (-2,0)--(8,0); 
\draw (0,2)--(0,-1);   
\draw (2,2)--(2,-1);   
\draw (4,2)--(4,-1);   
\draw (6,2)--(6,-1);   
\draw[dashed] (-1.2,0) circle (0.5);
\draw[dashed] (0,0) circle (0.5);
\draw[dashed] (0,1.2) circle (0.5);
\draw[dashed] (2,0) circle (0.5);
\draw[dashed] (2,1.2) circle (0.5);
\draw[dashed] (4,0) circle (0.5);
\draw[dashed] (4,1.2) circle (0.5);
\draw[dashed] (6,0) circle (0.5);
\draw[dashed] (6,1.2) circle (0.5);
\filldraw[black] (0,0) circle (2pt); 
\filldraw[black] (2,0) circle (2pt);
\filldraw[black] (4,0) circle (2pt);
\filldraw[black] (6,0) circle (2pt);
\node at (0.2,-0.2) {$o_1$};
\node at (2.2,-0.2) {$o_2$};
\node at (4.2,-0.2) {$o_3$};
\node at (6.2,-0.2) {$o_4$};
\node at (-1.2,0.7) {$(\tau, z)$};
\node at (0.8,-0.6) {$(\beta_1, p_1)$};
\node at (2.8,-0.6) {$(\beta_2, p_2)$};
\node at (4.8,-0.6) {$(\beta_3, p_3)$};
\node at (6.8,-0.6) {$(\beta_4, p_4)$};
\node at (0.8,1.8) {$(\alpha_1, q_1)$};
\node at (2.8,1.8) {$(\alpha_2, q_2)$};
\node at (4.8,1.8) {$(\alpha_3, q_3)$};
\node at (6.8,1.8) {$(\alpha_4, q_4)$};
\node at (8.2,0) {$\Theta$};
\node at (0,2.2) {$\Theta_1$};
\node at (2,2.2) {$\Theta_2$};
\node at (4,2.2) {$\Theta_3$};
\node at (6,2.2) {$\Theta_4$};
\end{tikzpicture}
\end{equation*}

(2.1) $P\in \Theta-\{o_1,o_2,o_3,o_4\}$, this is similar as the case (1) and $\eta_X$  admits a smooth extension along $\Theta-\{o_1,o_2,o_3,o_4\}$

(2.2) $P=o_i$, we can choose $U_{P,i}=\{(\beta_i,p_i)|\  |\beta_i|+|p_i|<\epsilon_P<<1\}$
as local coordinate of $P=(0,0)$, we have:
\begin{equation*}
    \begin{split}
        \Phi(\beta_i,p_i)&=\beta_ip_i^2\\
        \Theta:\{p_i=0\}&,\ \Theta_1:\{\beta_i=0\}
    \end{split}
\end{equation*}
let
\begin{equation}\label{2.2.1}
    \phi_{P,i}:=\frac{1}{2}(g^*\psi_P+\psi_P)
\end{equation}
then $\phi_{P,i}$ is $Z_2$-invariant and inherit a well defined smooth function
on $U_{P,i}\setminus \{\beta_ip_i\ne 0\}$, and
\begin{equation*}
    \phi_{P,i}(\beta_i,\bar{\beta}_i,p_i,\bar{p}_i)=C_i+O(|\beta_i|^{1/2})
\end{equation*}
where
\begin{equation}\label{2.2.2}
    C_i=
    \left\{
\begin{aligned}
-2\log v_0, & \ i=1\\
-2\log v_0+\frac{3}{16v_0}, & \ i=2\\
-2\log v_0, & \ i=3\\
-2\log v_0+\frac{3}{16v_0}, &  \ i=4
\end{aligned}
\right.
\end{equation}

as a result $\phi_{P,i}\in C^0(U_{P,i})\cap C^{\infty}(U_{P,i}\setminus X_{p}')$. since $\eta_X$ is $Z_2$-invariant, so:
\begin{equation*}
    \eta_X=\sqrt{-1}\partial\bar{\partial}\phi_{P,i}\ \text{on}\ U_{P,i}\setminus X_p'
\end{equation*}
(2.3) $P\in \Theta_i\setminus\{o_i\}$, we can choose $U_{P,i}=\{(\alpha_i,q_i)|\ |\alpha_i|+|q_i-\hat{q}_i|<\epsilon_P<<1\}$ as local coordinate of $P=(0,\hat{q}_i)$,
we have:
\begin{equation*}
    \Phi(\alpha_i,q_i)=\alpha_i,\ \Theta_i:\{\alpha_i=0\}
\end{equation*}

Let $\phi_{P,i}$ defined as equation (\ref{2.2.1}), we have on
$U_{P,i}$:
\begin{equation*}
\begin{split}
        \phi_{P,i}(\alpha_i,\bar{\alpha}_i,q_i,\bar{q}_i)&=C_i+O(|\alpha_i|^{1/2})\\
        \eta_X&=\sqrt{-1}\partial\bar{\partial}\phi_{P,i}
\end{split}
\end{equation*}
where $C_i$ given as equation (\ref{2.2.2}).

as a summary of cases (2.1)-(2.3) we conclude that:
$\eta_X$ admits a $C^{1/2}$-vanishing extension along $\Theta_i,i=1,2,3,4$.

In the following figure, for convenience, a dashed line represents that $\eta_X$ can be smoothly extended to this component, while a label $C^\alpha$ annotated at the end of the line segment indicates that $\eta_X$ admits a $C^{\alpha}$-vanishing extension to this component:
\begin{equation*}
     \begin{tikzpicture}
\draw[dashed](-2,0)--(8,0);
\draw (0,2)--(0,-1);
\draw (2,2)--(2,-1);
\draw (4,2)--(4,-1);
\draw (6,2)--(6,-1);
\node at (8.2,0) {$\Theta$};
\node at (0,2.2) {$\Theta_1$};
\node at (2,2.2) {$\Theta_2$};
\node at (4,2.2) {$\Theta_3$};
\node at (6,2.2) {$\Theta_4$};
\node at (0,-1.2) {$C^{\frac{1}{2}}$};
\node at (2,-1.2) {$C^{\frac{1}{2}}$};
\node at (4,-1.2) {$C^{\frac{1}{2}}$};
\node at (6,-1.2) {$C^{\frac{1}{2}}$};
\end{tikzpicture}
\end{equation*}

\textbf{(3) The case $X_p$ is of type $\mathrm{VI^*}$}. In this case $h_p=3$,
$\Phi^{-1}(B_{\delta}(p))$ is the canonical reduction of $(B_{\delta^{1/3}}(0)\times \mathbb{C})/Z_3$ and
\begin{equation*}
    \omega_p(\tau)=\frac{\eta-\eta^2\tau^{d_p}}{1-\tau^{d_p}},\ \eta=e^{2\pi i/3},J(p)=0,\ d_p\equiv 1\mod 3
\end{equation*}
\begin{equation*}
    \begin{split}
X_p'=X_p=3\Theta+2\Theta_{01}+\Theta_{02}&+2\Theta_{11}+\Theta_{12}+2\Theta_{21}+\Theta_{22}\\
        \Theta\cdot \Theta_{i1}&=o_i,i=0,1,2\\
        \Theta_{i2}\cdot \Theta_{i1}=c_i, (\Theta)^2&=(\Theta_{ij})^2=-2,i=0,1,2\ j=1,2
    \end{split}
\end{equation*}
There are three fixed points on $0\times_{\eta}\mathbb{C}$, say:
$\mathfrak{p}_0=[0,0],\mathfrak{p}_1=[0,\eta/3+2/3],\mathfrak{p}_2=[0,2\eta/3+1/3]$, and $\Theta_{ij},j=1,2$ are obtained by canonical reduction over $\mathfrak{p}_i$. Set:
\begin{equation*}
    \begin{split}
        z_0&=(1-\tau^{d_p})z\\
        z_1&=(1-\tau^{d_p})\left(z-\frac{\omega_p(\tau)}{3}-\frac{2}{3}\right)\\
        z_2&=(1-\tau^{d_p})\left(z-\frac{2\omega_p(\tau)}{3}-
        \frac{1}{3}\right)\\
        \alpha_i&=\tau^3,\beta_i=z_i^3,i=0,1,2\\
        p_{i1}&=\frac{\tau}{z_i^2},p_{i2}=\frac{\tau^2}{z_{i}},i=0,1,2\\
        q_{i1}&=\frac{z_i^2}{\tau},q_{i2}=\frac{z_i}{\tau^2},i=0,1,2
    \end{split}
\end{equation*}
the $Z_3$- action on $(\tau,z_i)$ is generated by
\begin{equation*}
    g\cdot (\tau,z_i)=(e_3\tau,e_3^{-1}z_i),e_3=e^{2\pi i/3}
\end{equation*}

\begin{equation*}
    \begin{tikzpicture}
        \draw (-3,0)--(5,0);
        \draw (0,-3)--(0,2);
        \draw (2,-1.5)--(2,5);
        \draw (1,4)--(5.5,4);
        \draw (4,2)--(4,-3);
        \draw (-3,-2)--(1,-2);
        \draw (3,-2)--(7,-2);
        \draw[dashed] (0,0) circle (0.5);
        \draw[dashed] (2,4) circle (0.5);
        \draw[dashed] (4,4) circle (0.5);
        \draw[dashed] (0,-2) circle (0.5);
        \draw[dashed] (-2,-2) circle (0.5);
        \draw[dashed] (-2,0) circle (0.5);
        \draw[dashed] (2,0) circle (0.5);
        \draw[dashed] (4,0) circle (0.5);
        \draw[dashed] (4,-2) circle (0.5);
        \draw[dashed] (6,-2) circle (0.5);
        \filldraw[black] (0,0) circle (2pt);
        \filldraw[black] (2,0) circle (2pt);
        \filldraw[black] (4,0) circle (2pt);
        \filldraw[black] (0,-2) circle (2pt);
        \filldraw[black] (2,4) circle (2pt);
        \filldraw[black] (4,-2) circle (2pt);
        \node at (-3.2,0) {$\Theta$};
        \node at (-3.3,-2) {$\Theta_{02}$};
        \node at (0,-3.2) {$\Theta_{01}$};
        \node at (2,5.2) {$\Theta_{11}$};
        \node at (0.7,4) {$\Theta_{12}$};
        \node at (4,-3.2) {$\Theta_{21}$};
        \node at (7.3,-2) {$\Theta_{22}$};
        \node at (0.2,-0.2) {$o_0$};
        \node at (0.2,-2.2) {$c_0$};
        \node at (2.2,-0.2) {$o_1$};
        \node at (2.2,3.8) {$c_1$};
        \node at (4.2,-0.2) {$o_2$};
        \node at (4.2,-2.2) {$c_2$};
        \node at (-2,0.7) {$(\tau, z)$};
        \node at (0.7,0.7) {$(\beta_0, p_{01})$};
        \node at (0.7,-1.3) {$(p_{02}, q_{01})$};
        \node at (-1.3,-1.3) {$(\alpha_0, q_{02})$};
        \node at (2.7,0.7) {$(\beta_1, p_{11})$};
        \node at (2.7,4.7) {$(p_{12}, q_{11})$};
        \node at (4.7,4.7) {$(\alpha_1, q_{12})$};
        \node at (4.7,0.7) {$(\beta_2, p_{21})$};
        \node at (4.7,-1.3) {$(p_{22}, q_{21})$};
        \node at (6.7,-1.3) {$(\alpha_{2}, q_{22})$};
    \end{tikzpicture}
\end{equation*}

(3.1) $P\in \Theta-\{o_0,o_1,o_2\}$, this is similar as the case (1) and
$\eta_X$ extend smoothly on $\Theta-\{o_0,o_1,o_2\}$.

(3.2) $P\in \Theta_{i1}-\{c_i\}$, we can choose
$U_{P,i}=\{(\beta_i,p_{i1})|\ |\beta_i|+|p_{i1}-\hat{p}_{i1}|<\epsilon_{P}<<1\}$ as a local coordinate of $P=(0,\hat{p}_{i1})$:
\begin{equation*}
        \Phi(\beta_i,p_{i1})=\beta_i^2p_{i1}^3,\ \Theta_{i1}:\{\beta_i=0\}
\end{equation*}
let
\begin{equation}\label{case3.2}
    \phi_{P,i}:=\frac{1}{3}(g^{2*}\psi_P+g^*\psi_P+\psi_P)
\end{equation}
then $\phi_{P,i}$ is $Z_3$-invariant and inherits a well defined holomorphic function on $U_{P,i}$ where $\beta_ip_{i1}\ne 0$, and we have on $U_{P,i}$:
\begin{equation*}
\begin{split}
    \phi_{P,i}(\beta_i,\bar\beta{_i},p_{i1},\bar{p}_{i1})&=C_i+O(|\beta_i|^{1/3})\\
    \eta_X&=\sqrt{-1}\partial\bar{\partial}\phi_{P,i}
\end{split}
\end{equation*}
where $C_i$ are constants:
\begin{equation*}
    C_i=\left\{
\begin{aligned}
-\log\frac{3}{4}, & & i=0\\
-\log\frac{3}{4}+\frac{\sqrt{3}}{18}, & & i=1\\
-\log\frac{3}{4}+\frac{2\sqrt{3}}{9}, & & i=2\\
\end{aligned}
\right.
\end{equation*}
(3.3) $P=c_i$, we can choose $U_{P,i}'=\{(p_{i2},q_{i1})|\ |p_{i2}|+|q_{i1}|<\epsilon_P<<1\}$ as a local coordinate of
$P=(0,0)$, and
\begin{equation*}
    \Phi(p_{i2},q_{i1})=p_{i2}^2q_{i1},\ \Theta_{i1}:\{p_{i2}=0\},\ \Theta_{i2}:\{q_{i1}=0\}
\end{equation*}
let $\phi_{P,i}$ defined as equation (\ref{case3.2}), we have:
\begin{equation*}
    \phi_{P,i}(p_{i2},\bar{p}_{i2},q_{i1},\bar{q}_{i1})
    =C_i+O(|p_{i2}|^{1/3})=C_i+O(|q_{i1}|^{1/3})
\end{equation*}
(3.4) $P\in \Theta_{i2}-\{c_i\}$, we can choose
$U_{P,i}'=\{(\alpha_i,q_{i2}|\ |\alpha_i|+|p_{i2}-\hat{p}_{i2}|<\epsilon_P<<1)\}$ as local coordinate of $P=(0,\hat{p}_{i2})$ and
\begin{equation*}
    \Phi(\alpha_i,q_{i2})=\alpha_i,\Theta_{i2}:\{\alpha_i=0\}
\end{equation*}

let $\phi_{P,i}$ defined as equation (\ref{case3.2}), we have:
\begin{equation*}
    \phi_{P,i}(\alpha_i,\bar{\alpha}_i,q_{i2},\bar{q}_{i2})
    =C_i+O(|\alpha_i|^{1/3})
\end{equation*}
as a summary of (3.2)-(3.3),  we conclude that $\eta_X$ admits a
$C^{1/3}$-vanishing extension along $\Theta_{ij},i=0,1,2,j=1,2$.
\begin{equation*}
    \begin{tikzpicture}
        \draw[dashed] (-3,0)--(5,0);
        \draw (0,-3)--(0,2);
        \draw (2,-1.5)--(2,5);
        \draw (1,4)--(5.5,4);
        \draw (4,2)--(4,-3);
        \draw (-3,-2)--(1,-2);
        \draw (3,-2)--(7,-2);
        \node at (-3.2,0) {$\Theta$};
        \node at (-3.3,-2) {$\Theta_{02}$};
        \node at (1.1,-2.2) {$C^{1/3}$};
        \node at (0,-3.2) {$\Theta_{01}$};
        \node at (0,2.2) {$C^{1/3}$};
        \node at (2,5.2) {$\Theta_{11}$};
        \node at (2.4,-1.5) {$C^{1/3}$};
        \node at (0.7,4) {$\Theta_{12}$};
        \node at (6,4) {$C^{1/3}$};
        \node at (4,-3.2) {$\Theta_{21}$};
        \node at (4,2.2) {$C^{1/3}$};
        \node at (7.3,-2) {$\Theta_{22}$};
        \node at (3.1,-2.2) {$C^{1/3}$};
         \node at (0.2,-0.2) {$o_0$};
        \node at (0.2,-2.2) {$c_0$};
        \node at (2.2,-0.2) {$o_1$};
        \node at (2.2,3.8) {$c_1$};
        \node at (4.2,-0.2) {$o_2$};
        \node at (4.2,-2.2) {$c_2$};
    \end{tikzpicture}
\end{equation*}

\textbf{(4) The case $X_p$ is of type $\mathrm{III^*}$}. In this case $h_p=4$,
$\Phi'^{-1}(B_{\delta}(p))$ is the canonical reduction of $B_{\delta^{1/4}}(0)\times_{\omega_p}\mathbb{C}/Z_4$ and
\begin{equation*}
    \begin{split}
        \omega_p(\tau)=\frac{\sqrt{-1}(1+\tau^{2d_p})}{1-\tau^{2d_p}}
        ,J(p)=1,d_p\equiv 1\mod 2
    \end{split}
\end{equation*}
\begin{equation*}
\begin{split}
      X_p'=X_p=4\Theta +3\Theta_{01}+2\Theta_{02}&+\Theta_{03}+
    3\Theta_{11}+2\Theta_{12}+\Theta_{13}+2\Theta_2\\
    \Theta\cdot \Theta_{i1}=\Theta_{i1}&=o_i,i=0,1\\
    \Theta\cdot \Theta_2&=o_2\\
    \Theta_{i1}\cdot\Theta_{i2}=c_i,\Theta_{i2}&\cdot\Theta_{i3}=e_i,i=0,1
\end{split}
\end{equation*}
there are theree fixed points on $0\times_{\sqrt{-1}}\mathbb{C}$, say
$\mathfrak{q}_0=[0,0],\mathfrak{q}_1=[0,\sqrt{-1}/2+1/2],\mathcal{q}_2=[0,1/2]$, and
$\Theta_{ij},i=0,1,j=1,2,3$ are obtained by canonical reduction over $\mathfrak{q}_i$,
and $\Theta_2$ is obtained by canonical reduction over $\mathfrak{q}_2$. set:
\begin{equation*}
    \begin{split}
        z_0&=(1-\tau^{2d_p})z\\
        z_1&=(1-\tau^{2d_p})\left(z-\frac{\omega_p(\tau)}{2}-\frac{1}{2}\right)\\
        z_2&=(1-\tau^{2d_p})\left(z-\frac{1}{2}\right)\\
        \alpha_i&=\tau^4,\beta_i=z_i^4,i=0,1\\
        \alpha_2&=\tau^2, \beta_2=z_2^2\\
        p_{ik}&=\frac{\tau^k}{z_i^{4-k}},i=0,1,k=1,2,3\\
        q_{ik}&=\frac{z_i^{4-k}}{\tau^k},i=0,1,k=1,2,3\\
        p_{2}&=\frac{\tau}{z_2},q_{2}=\frac{z_2}{\tau}
    \end{split}
\end{equation*}
then the $Z_4$-action on $(\tau,z_i)i=0,1$ is generated by
\begin{equation*}
    g\cdot (\tau,z_i)=(e_4\tau,e_4^{-1}z_i),e_4=e^{\pi i/2}
\end{equation*}
and the $Z_2$-action on $(\tau,z_2)$ is generated by
\begin{equation*}
    g^2\cdot (\tau,z_2)=(-\tau,-z_2)
\end{equation*}

\begin{equation*}
\begin{tikzpicture}
    \draw (-3,0)--(5.5,0);
    \draw (-0.5,-1)--(-0.5,2.5);
    \draw (0.3,1.7)--(-3,1.7);
    \draw (-2,0.7)--(-2,3.5);
    \draw (1.5,1.4)--(1.5,-2.3);
     \draw (4.5,2)--(4.5,-0.7);
    \draw (0.5,-1.5)--(3.8,-1.5);
    \draw (3,-0.5)--(3,-4);
    \node at (-3.2,0) {$\Theta$};
    \node at (-3.3,1.7) {$\Theta_{02}$};
    \node at (-0.5,-1.2) {$\Theta_{01}$};
    \node at (-2,3.7) {$\Theta_{03}$};
    \node at (1.5,-2.5) {$\Theta_{11}$};
    \node at (4.1,-1.5) {$\Theta_{12}$};
    \node at (3,-4.2) {$\Theta_{13}$};
    \node at (4.5,-0.9) {$\Theta_{2}$};
     \draw[dashed] (-0.5,0) circle (0.5);
     \draw[dashed] (1.5,0) circle (0.5);
     \draw[dashed] (4.5,0) circle (0.5);
     \draw[dashed] (-2,0) circle (0.5);
     \draw[dashed] (1.5,-1.5) circle (0.5);
     \draw[dashed] (3,-1.5) circle (0.5);
     \draw[dashed] (-0.5,1.7) circle (0.5);
     \draw[dashed] (-2,1.7) circle (0.5);
     \draw[dashed] (-2,2.9) circle (0.5);
     \draw[dashed] (4.5,1.3) circle (0.5);
     \draw[dashed] (3,-3) circle (0.5);
     \node at (-0.3,-0.2) {$o_0$};
     \node at (1.7,-0.2) {$o_1$};
     \node at (4.7,-0.2) {$o_2$};
     \node at (-0.3,1.5) {$c_0$};
    \node at (-1.8,1.5) {$e_0$};
    \node at (1.7,-1.7) {$c_1$};
    \node at (3.2,-1.7) {$e_1$};
    \node at (-1.5,0.7) {$(\tau,z)$};
    \node at (0.2,0.7) {$(p_{01},\beta_0)$};
    \node at (0.2,2.4) {$(p_{02},q_{01})$};
    \node at (-3,1.2) {$(p_{03},q_{02})$};
    \node at (-3,2.4) {$(\alpha_0,q_{03})$};
    \node at (2.2,0.7) {$(p_{11},\beta_1)$};
    \node at (0.5,-2) {$(p_{12},q_{11})$};
    \node at (4,-2) {$(p_{13},q_{12})$};
    \node at (4,-3.5) {$(\alpha_1,q_{13})$};
    \node at (5.4,-0.6) {$(p_2,\beta_2)$};
    \node at (5.4,0.8) {$(\alpha_2,q_2)$};
\end{tikzpicture}
\end{equation*}
Applying a similar analysis as the case (3), we find:

(4.1) $\eta_X$ extends smoothly on $\Theta\setminus\{o_0,o_1,o_2\}$.

(4.2) $\eta_X$ admits a $C^{1/4}$-vanishing current extension along $\Theta_{i1},i=1,2$.

(4.3) $\eta_X$ admits a $C^{1/2}$-vanishing current extension along $\Theta_{i2},i=1,2$

(4.4) $\eta_X$ admits a $C^{1/2}$-vanishing current extension along $\Theta_{i3},i=1,2$.

(4.5) $\eta_X$ admits a $C^{1/2}$-vanishing current extension along $\Theta_2$.

\begin{equation*}
\begin{tikzpicture}
    \draw[dashed] (-3,0)--(5.5,0);
    \draw (-0.5,-1)--(-0.5,2.5);
    \draw (0.3,1.7)--(-3,1.7);
    \draw (-2,0.7)--(-2,3.5);
    \draw (1.5,1.4)--(1.5,-2.3);
     \draw (4.5,2)--(4.5,-0.7);
    \draw (0.5,-1.5)--(3.8,-1.5);
    \draw (3,-0.5)--(3,-4);
    \node at (-3.2,0) {$\Theta$};
    \node at (-3.3,1.7) {$\Theta_{02}$};
    \node at (-0.5,-1.2) {$\Theta_{01}$};
    \node at (-2,3.7) {$\Theta_{03}$};
    \node at (1.5,-2.5) {$\Theta_{11}$};
    \node at (4.1,-1.5) {$\Theta_{12}$};
    \node at (3,-4.2) {$\Theta_{13}$};
    \node at (4.5,-0.9) {$\Theta_{2}$};
     \node at (-0.3,-0.2) {$o_0$};
     \node at (1.7,-0.2) {$o_1$};
     \node at (4.7,-0.2) {$o_2$};
     \node at (-0.3,1.5) {$c_0$};
    \node at (-1.8,1.5) {$e_0$};
    \node at (1.7,-1.7) {$c_1$};
    \node at (3.2,-1.7) {$e_1$};
    \node at (-0.5,2.7) {$C^{1/4}$};
    \node at (-2,0.5) {$C^{1/2}$};
    \node at (0.6,1.7) {$C^{1/2}$};
    \node at (1.5,1.7) {$C^{1/4}$};
    \node at (0.1,-1.5) {$C^{1/2}$};
    \node at (3,-0.3) {$C^{1/2}$};
    \node at (4.5,2.2) {$C^{1/2}$};
\end{tikzpicture}
\end{equation*}

\textbf{(5) The case $X_p$ is of type $II^*$}. In this case $h_p=6$,
$\Phi'^{-1}(B_{\delta}(p))$ is the canonical reduction of $B_{\delta^{1/6}}(0)\times_{\omega_p}\mathbb{C}/Z_6$ and
\begin{equation*}
    \begin{split}
        \omega_p(\tau)=\frac{\eta-\eta^2\tau^{2d_p}}{1-\tau^{2d_p}}
        ,J(p)=,d_p\equiv 2\mod 3
    \end{split}
\end{equation*}
\begin{equation*}
\begin{split}
      X_p'=X_p=6\Theta+5\Theta_1+4\Theta_2+3\Theta_3&+2\Theta_4+\Theta_5+
      4\Theta_6+2\Theta_7+3\Theta_8\\
    \Theta\cdot \Theta_1=o_1,\Theta\cdot\Theta_6&=o_2,\Theta\cdot\Theta_8=o_3\\
\Theta_1\cdot\Theta_2=\Theta_2\cdot\Theta_3=&\Theta_3\cdot\Theta_4=\Theta_4\cdot\Theta_5=1\\
   \Theta_6\cdot\Theta_7&=1
\end{split}
\end{equation*}
Applying similar analysis as above, we have:

(5.1) $\eta_X$ extends smoothly on $\Theta\setminus\{o_1,o_2,o_3\}$.

(5.2) $\eta_X$ admits a $C^{1/6}$-vanishing current extension along $\Theta_{1}$.

(5.3) $\eta_X$ admits a $C^{1/3}$-vanishing current extension along $\Theta_2$.

(5.4) $\eta_X$ admits a $C^{1/2}$-vanishing current extension along $\Theta_3$.

(5.5) $\eta_X$ admits a $C^{2/3}$-vanishing current extension along $\Theta_4$.

(5.6) $\eta_X$ admits a $C^{2/3}$-vanishing current extension along $\Theta_5$.

(5.7) $\eta_X$ admits a $C^{1/3}$-vanishing current extension along $\Theta_6$.

(5.8) $\eta_X$ admits a $C^{2/3}$-vanishing current extension along $\Theta_7$.

(5.9) $\eta_X$ admits a $C^{1/2}$-vanishing current extension along $\Theta_8$.

\begin{equation*}
    \begin{tikzpicture}
        \draw[dashed] (0,0)--(9,0);
        \draw (1,-1)--(1,1.4);
        \draw (0.3,1)--(2.7,1);
        \draw (2,0.5)--(2,3);
        \draw (1.5,2)--(4,2);
        \draw (3,1.5)--(3,4);
        \draw (4.5,1)--(4.5,-2);
        \draw (3.5,-1)--(5.5,-1);
        \draw (8,-1)--(8,1);
        \node at (1.2,-0.2) {$o_1$};
        \node at (4.7,-0.2) {$o_2$};
        \node at (8.2,-0.2) {$o_3$};
        \node at (1,1.6) {$C^{\frac{1}{6}}$};
        \node at (1.3,2) {$C^{\frac{2}{3}}$};
        \node at (0.1,1) {$C^{\frac{1}{3}}$};
        \node at (2,3.2) {$C^{\frac{1}{2}}$};
        \node at (3,4.2) {$C^{\frac{2}{3}}$};
        \node at (4.5,-2.2) {$C^{\frac{1}{3}}$};
        \node at (5.8,-1) {$C^{\frac{2}{3}}$};
        \node at (1,-1.2) {$\Theta_1$};
        \node at (2.9,0.8) {$\Theta_2$};
        \node at (2,0.3) {$\Theta_3$};
        \node at (4.3,2) {$\Theta_4$};
        \node at (3,1.3) {$\Theta_5$};
        \node at (4.5,1.2) {$\Theta_6$};
        \node at (3.2,-1) {$\Theta_7$};
        \node at (8,1.2) {$\Theta_8$};
        \node at (8,-1.2) {$C^{\frac{1}{2}}$};
    \end{tikzpicture}
\end{equation*}

\textbf{(6) The case $X_p$ is of type $IV$}. In this case $h_p=3$, $\Phi'^{-1}(B_{\delta}(p))$ is the canonical reduction of $B_{\delta^{1/3}}(0)\times_{\omega_p} \mathbb{C}/Z_3$ and
\begin{equation*}
    \omega_p(\tau)=\frac{\eta-\eta^2\tau^{d_p}}{1-\tau^{d_p}},J(p)=0,d_p\equiv 2\mod 3
\end{equation*}
\begin{equation*}
\begin{split}
     X_p'=3\Theta+\Theta_1+\Theta_2+\Theta_3\\
    \Theta\cdot \Theta_i=o_i,i=1,2,3, \Theta^2=-1\\
    X_p'\xrightarrow{contract\ \Theta} \Theta_1'+\Theta_2'+\Theta_3'=X_p
\end{split}
\end{equation*}
we have:

(1) $\eta_X$ extends smoothly to $\Theta\setminus\{o_1,o_2,o_3\}$

(2) $\eta_X$ admits a $C^{1/3}$-vanishing current extension along $\Theta_i$,i=1,2,3.

\begin{equation*}
    \begin{tikzpicture}
        \draw[dashed] (0,0)--(8,0);
        \draw (2,-1)--(2,1);
        \node at (2,1.2) {$C^{\frac{1}{3}}$};
        \node at (2,-1.2) {$\Theta_1$};
        \draw (4,-1)--(4,1);
        \node at (4,-1.2) {$\Theta_2$};
        \node at (4,1.2) {$C^{\frac{1}{3}}$};
        \draw (6,-1)--(6,1);
        \node at (6,-1.2) {$\Theta_3$};
        \node at (6,1.2) {$C^{\frac{1}{3}}$};
    \end{tikzpicture}
\end{equation*}

\textbf{(7) The case $X_p$ is of type $III$}. In this case $h_p=4$, $\Phi'^{-1}(B_{\delta}(p))$ is the canonical reduction of $B_{\delta^{1/3}(0)\times_{\omega_p}}\mathbb{C}/Z_4$, and
\begin{equation*}
    \begin{split}
        \omega_p(\tau)=\frac{\sqrt{-1}(1+\tau^{2d_p})}{1-\tau^{2d_p}},J(p)=1,d_p\equiv 1\mod2\\
        X_p'=4\Theta+\Theta_0+\Theta_1+2\Theta_2\\
        \Theta\cdot \Theta_i=o_i,i=0,1,2,\Theta^2=-1\\
        X_p'\xrightarrow{contract\ \Theta}\Theta_0'+\Theta_1'+2\Theta_2'\xrightarrow{contract\ \Theta_2'}
        \Theta_0''+\Theta_1''=X_p
    \end{split}
\end{equation*}
we have:

(1) $\eta_X$ extend smoothly to $\Theta\setminus\{o_0,o_1,o_2\}$

(2) $\eta_X$ admits a $C^{1/4}$-vanishing current extension along $\Theta_i$,i=0,1.

(3) $\eta_X$ admits a $C^{1/2}$-vanishing current extension along $\Theta_2$.
\begin{equation*}
    \begin{tikzpicture}
        \draw[dashed] (0,0)--(8,0);
        \draw (2,-1)--(2,1);
        \node at (2,1.2) {$C^{\frac{1}{4}}$};
        \node at (2,-1.2) {$\Theta_0$};
        \draw (4,-1)--(4,1);
        \node at (4,-1.2) {$\Theta_1$};
        \node at (4,1.2) {$C^{\frac{1}{4}}$};
        \draw (6,-1)--(6,1);
        \node at (6,-1.2) {$\Theta_2$};
        \node at (6,1.2) {$C^{\frac{1}{2}}$};
    \end{tikzpicture}
\end{equation*}

\textbf{(8) The case $X_p$ is of type $II$}. In this case $h_p=6$, $\Phi'^{-1}(B_{\delta}(p))$ is the canonical reduction of $B_{\delta^{1/6}}(0)\times_{\omega_p} \mathbb{C}/Z_6$ and
\begin{equation*}
    \omega_p(\tau)=\frac{\eta-\eta^2\tau^{2d_p}}{1-\tau^{2d_p}},J(p)=0,d_p\equiv 1\mod 3
\end{equation*}
\begin{equation*}
    \begin{split}
        X_p'=6\Theta+\Theta_1&+2\Theta_2+3\Theta_3\\
        \Theta\cdot \Theta_i=o_i,i=&1,2,3, \Theta^2=-1\\
        X_p'\xrightarrow{contract\ \Theta} \Theta_1'+2\Theta_2'+3\Theta_3'&
        \xrightarrow{contract\ \Theta_3'} \Theta_1''+2\Theta_2''
        \xrightarrow{contract\ \Theta_2''} \Theta_1'''=X_p
    \end{split}
\end{equation*}
we have:

(1) $\eta_X$ extend smoothly to $\Theta\setminus\{o_1,o_2,o_3\}$

(2) $\eta_X$ admits a $C^{1/6}$-vanishing current extension along $\Theta_1$

(3) $\eta_X$ admits a $C^{1/3}$-vanishing current extension along $\Theta_2$.

(4) $\eta_X$ admits a $C^{1/2}$-vanishing current extension along $\Theta_2$.
\begin{equation*}
    \begin{tikzpicture}
        \draw[dashed] (0,0)--(8,0);
        \draw (2,-1)--(2,1);
        \node at (2,1.2) {$C^{\frac{1}{6}}$};
        \node at (2,-1.2) {$\Theta_1$};
        \draw (4,-1)--(4,1);
        \node at (4,-1.2) {$\Theta_2$};
        \node at (4,1.2) {$C^{\frac{1}{3}}$};
        \draw (6,-1)--(6,1);
        \node at (6,-1.2) {$\Theta_3$};
        \node at (6,1.2) {$C^{\frac{1}{2}}$};
    \end{tikzpicture}
\end{equation*}
\end{proof}

\begin{corollary}\label{extension-first}
    $\eta_X$ extends a positive closed $(1,1)$-current on $\check{X}$
\end{corollary}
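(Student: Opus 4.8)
The plan is to deduce the corollary from Lemma \ref{extension1} by pushing forward along the reduction map $R\colon X'\to\check X$. By Lemma \ref{extension1} we already have a closed positive $(1,1)$-current $\tilde\eta$ on $X'$ which agrees with $\eta_X$ over the locus of regular fibers and which is $C^{\alpha}$-vanishing (in particular has continuous local potentials) or smooth along every irreducible component of each $X'_p$, $p\in\mathcal{P}'$. So the only work left is to transport $\tilde\eta$ from $X'$ down to $\check X$, and this is where the argument becomes essentially formal.

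Recall that $R$ is proper and birational, obtained as a finite composition of contractions of $(-1)$-curves; it is biholomorphic over $X^{*}=\Phi^{-1}(\Sigma_g^{*})$, and over each $B_{\delta}(p)$, $p\in\mathcal{P}'$, it contracts only a finite union of smooth rational curves. Define $\eta_X$ on $\check X$ (keeping the same letter) to be the pushforward $R_{*}\tilde\eta$. The pushforward of a positive $(p,p)$-current under a holomorphic map is again positive, since pullbacks of positive test forms are positive, and pushforward commutes with $d$, so closedness is preserved as well. The finiteness of mass required for $R_{*}\tilde\eta$ to be well defined follows from properness of $R$ together with the fact that $\tilde\eta$ has locally finite mass on $X'$ (its local potentials being continuous by Lemma \ref{extension1}). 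Hence $R_{*}\tilde\eta$ is a well-defined closed positive $(1,1)$-current on $\check X$.

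It remains to check that this current genuinely extends $\eta_X$. Since $R$ restricts to a biholomorphism over $X^{*}$, we get $(R_{*}\tilde\eta)|_{X^{*}}=\tilde\eta|_{X^{*}}=\eta_X|_{X^{*}}$, i.e.\ $R_{*}\tilde\eta$ coincides with the semi-flat cscK metric of \eqref{etaX} over $X^{*}$, which proves the corollary. One further remarks that, because the local potentials of $\tilde\eta$ on $X'$ are continuous, $\tilde\eta$ has zero Lelong number everywhere; since the generic Lelong number of $\tilde\eta$ along each curve contracted by $R$ is likewise zero, $R_{*}\tilde\eta$ still has vanishing Lelong number at every point of $\check X$. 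Given Lemma \ref{extension1}, there is no serious obstacle here: the only points needing care are the standard facts that positivity and closedness are preserved under the proper pushforward and the behaviour at the finitely many points to which $(-1)$-curves are contracted, both of which are covered by the properties of pushforward recalled above; all the genuine content of the extension across the singular fibers over $\mathcal{P}'$ is already carried by Lemma \ref{extension1}.
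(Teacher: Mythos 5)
Your proposal is correct and follows essentially the same route as the paper: the paper's own proof is the one-line observation that the pushforward $R_*$ of the extension furnished by Lemma \ref{extension1} gives the desired closed positive $(1,1)$-current on $\check{X}$. Your additional justifications (properness of $R$, preservation of positivity and closedness under proper pushforward, agreement with $\eta_X$ over $X^*$ where $R$ is a biholomorphism) simply make explicit the standard facts the paper leaves implicit.
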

\begin{proof}
    Let $\eta_X'$ be the extension of $\eta_X$ to $X'$ as showed in the lemma
    \ref{extension1}, and $R_*\eta_X$ is our desired current extension of $\eta_X$
    on $\check{X}$.
\end{proof}

\subsection{Current extension on $X$}
In this subsection, we try to extend $\eta_X$ as a positive closed $(1,1)$-current on the whole $X$.

\begin{definition}(\textbf{double logarithmic pole})
    Let $X$ be a complex manifold and $C$ a subvariety of $X$. A (possibly singular) K\"ahler metric on $X$ is said to have a
    double logarithmic pole along
    $C$ if for every point $p\in C$, there exists a neighborhood $U(p)$ of $p$
    in $X$ and holomorphic functions $s_1,...,s_m$ defined on $U(p)$ such that:

    1. $C \cap U(p)$ is defined by the equation $s_1 s_2 \dots s_m = 0$ within $U(p)$.

    2. There exists a smooth real-valued function $f$ on $U(p)$ such that
    the restriction of $\omega_X$ on $U(p)-C$ can be expressed as:
    \begin{equation*} \omega_X |_{U(p)-C} = \sqrt{-1}\partial \bar{\partial} \left( -\sum_{i=1}^m c_i \log(\log|s_i|)^2 + f \right) \end{equation*}
    where $c_i$ are positive real numbers.
\end{definition}

Let $p$ be a pole of the Jacobian $J$ with order $b\ge 1$, then
\begin{equation*}
\begin{split}
      U_p^*:=\Phi^{-1}(B^*_{\delta}(p))=
    (\mathbb{H}_{\delta}^+\times \mathbb{C})/G_p =( \mathbb{H}_{\delta}^+\times_{\omega_b} \mathbb{C})/\mathbb{Z}
\end{split}
\end{equation*}
where $\omega_b(\zeta)=b\zeta$ $\tau=\Phi(\zeta)=e^{2\pi \sqrt{-1}\zeta}\in B^*_{\delta}(p)$.
and the metric $\eta_X$ restrict on $U_p^*$ as:
\begin{equation*}
    \eta_b=\sqrt{-1}\partial\bar{\partial}
    \left(-\log v^2+\frac{y^2}{bv}\right)
\end{equation*}
let
    \begin{equation*}
    \begin{split}
            \tau&=e^{2\pi \sqrt{-1} \zeta},  w=e^{2\pi \sqrt{-1}z}
    \end{split}
    \end{equation*}
Now we following Kodaira's construction
\cite{Ko63}:
\begin{equation*}
\begin{split}
     U_p^*=W':=B_{\delta}^*(0)\times \mathbb{C}^*/
    (\tau,w)\sim(\tau,w\tau^b)
\end{split}
\end{equation*}
where $\mathbb{C}^*:=\mathbb{C}-\{0\}$, and the metric
$\eta_b$ expressed in coordinate $(\tau,w)$ as:
\begin{equation}
    \eta_b=\sqrt{-1}\partial\bar{\partial}\left(
    -\log(\log|\tau|)^2-\frac{(\log|w|)^2}{2\pi b\log|\tau|}
    \right)
\end{equation}

When $b=1$, and $A_p=\begin{pmatrix}
    1 & 1\\
    0 & 1
\end{pmatrix}$, $X_p$ is of type $I_1$, let
\begin{equation*}
    W:=W'\cup (0\times \mathbb{C}^*)
\end{equation*}
set
\begin{equation*}
\begin{split}
    x(\tau,w)&=-2\sum_{n=1}^{\infty}(1-\tau^n)^{-2}\tau^n+
    \sum_{n=-\infty}^{+\infty}(1-w\tau^n)^{-2}w\tau^n\\
    y(\tau,w)&=\sum_{n=-\infty}^{+\infty}(1-w\tau^n)^{-3}(1+w\tau^n)w\tau^n
\end{split}
\end{equation*}
define
\begin{equation}
\begin{split}
      \kappa:W &\rightarrow  B_{\delta}(0)\times \mathbb{P}^2\\
    [\tau,w]&\rightarrow \tau\times [x(\tau,w):y(\tau,w):1]
\end{split}
\end{equation}
then $U_p:=\Phi^{-1}(B_{\delta}(p))$ is precisely
the subvariety of $B_{\delta}(0)\times \mathbb{P}^2$ determined by equation:
\begin{equation}
       F(\tau,x,y)= y^2-4x^3-x^2+g_2(\tau)x+g_3(\tau)=0
\end{equation}
where
\begin{equation*}
\begin{split}
        g_2(\tau)&=20\sum_{n=1}^{\infty}(1-\tau^n)^{-1}n^3\tau^n\\
        g_3(\tau)&=\frac{1}{3}\sum_{n=1}^{\infty}(1-\tau^n)^{-1}
        (7n^5+5n^3)\tau^n
\end{split}
\end{equation*}
and $\Phi$ is the projection $B_{\delta}(0)\times \mathbb{P}^2\rightarrow B_{\delta}(0)$, the singular fiber $X_p$
is the plane curve in $0\times\mathbb{P}^2$ defined by the equation:
\begin{equation*}
    y^2-4x^3-x^2=0
\end{equation*}
which has an ordinary double point at $\mathfrak{q}:x=y=0$, and $\kappa$ maps $W$ biholomorphically to $U_p-\mathfrak{q}$

If $q_0=(0,w_0)\in X_p-\mathfrak{q}=0\times\mathbb{C}^*
\subset W$, assume
\begin{equation*}
    \frac{1}{N}<|w_0|<N, N\in \mathbb{Z}^+
\end{equation*}
Let
\begin{equation*}
    U(q_0):=\{(\tau,w)|\ |\tau|<\frac{1}{N^2},
    \frac{1}{N}<|w|<N\}
\end{equation*}
then $\forall (\tau,w)\in U(q_0)$, $(\tau,w\tau^k)\in
U(q_0)$ if and only if $k=0$, so $U(q_0)$ can be chosen as a local coordinate near $q_0$, note that $X_p\cap U(q_0)=\{\tau=0\}\cap U(q_0)$, since
$\log |w|$ is bounded on $U(q_0)$, the following function extends smoothly to $\{\tau =0\}\cap U(q_0)$.
\begin{equation*}
    \frac{(\log|w|)^2}{\log|\tau|}
\end{equation*}
as a result, the semi-flat cscK metric have a double logarithmic pole along $X_p-\mathfrak{q}$. For general $b\ge 2$, $X_p=\Theta_1+...+\Theta_b$, $\Theta_i\cdot
\Theta_{i+1}=\mathfrak{q}_i$(we define $\Theta_{b+1}=\Theta_1$),
$\Phi^{-1}(B_{\delta}(p))-\{\mathfrak{q}_1,...,\mathfrak{q}_b\}$ is obtained by gluing $b$-copies of $W$, as a result, we have:

\begin{proposition}\label{doublelog1}
The semi-flat cscK metric $\eta_X$ have a double logarithmic pole along smooth locus of fiber of type $\mathrm{I}_b$.
\end{proposition}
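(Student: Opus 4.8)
The plan is to deduce the proposition for general $b$ from the case $b=1$, which has been settled in the discussion preceding the statement, by carrying out the same local computation on each of the $b$ pieces into which an $\mathrm{I}_b$ fibre decomposes. Recall that on $U_p^*=\Phi^{-1}(B^*_\delta(p))$, in the Kodaira coordinates $(\tau,w)$ with $\tau=\Phi$ and $w=e^{2\pi\sqrt{-1}z}$, the semi-flat cscK metric has the explicit potential
\begin{equation*}
\varphi_b=-\log(\log|\tau|)^2-\frac{(\log|w|)^2}{2\pi b\log|\tau|}.
\end{equation*}
The decisive elementary point is that the second summand, although singular along $\{\tau=0\}$ in general, becomes harmless once $\log|w|$ is bounded: on any chart in which the fibre is cut out by $\{\tau=0\}$ and $|w|$ stays pinched between two fixed positive constants, $\varphi_b$ takes the form $-\log(\log|\tau|)^2+f$ with $f$ extending smoothly across $\{\tau=0\}$, which is exactly the normal form required in the definition of a double logarithmic pole, with $m=1$, $s_1=\tau$ and constant $c_1=1$.

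So the task reduces to producing such a chart around an arbitrary point $q_0$ of the smooth locus of an $\mathrm{I}_b$ fibre $X_p$. For $b=1$ this is the chart $U(q_0)=\{|\tau|<N^{-2},\ N^{-1}<|w|<N\}$ already constructed above, on which the identification $(\tau,w)\sim(\tau,w\tau^b)$ is trivial, $(\tau,w)$ are honest holomorphic coordinates, $X_p\cap U(q_0)=\{\tau=0\}$, and $\log|w|$ is bounded; this is precisely the computation that establishes the double logarithmic pole along $X_p\setminus\{\mathfrak{q}\}$. For $b\ge 2$ one invokes the description, recalled from Kodaira's construction, that $X_p=\Theta_1+\cdots+\Theta_b$ is a cycle of rational curves and that $\Phi^{-1}(B_\delta(p))\setminus\{\mathfrak{q}_1,\dots,\mathfrak{q}_b\}$ is obtained by gluing $b$ copies of $W$. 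A point $q_0$ of the smooth locus of $X_p$ lies in the $0\times\mathbb{C}^*$ part of exactly one such copy, and inside that copy the very same domain $U(q_0)$ serves: the size constraints again force the identification $(\tau,w)\sim(\tau,w\tau^{b})$ to be trivial on $U(q_0)$, so $(\tau,w)$ are coordinates there; the component $\Theta_i$ through $q_0$ meets $U(q_0)$ in the reduced divisor $\{\tau=0\}$, while the remaining components and all the nodes $\mathfrak{q}_j$ lie outside $U(q_0)$; and $\log|w|$ is bounded on $U(q_0)$. Hence $\eta_X|_{U(q_0)\setminus X_p}=\sqrt{-1}\partial\bar\partial\bigl(-\log(\log|\tau|)^2+f\bigr)$ with $f$ smooth on $U(q_0)$, which is the required double logarithmic pole along $\Theta_i$ away from the nodes; as $q_0$ ranges over the smooth locus of $X_p$ these charts cover it, proving the claim.

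The only step that is not immediate is the bookkeeping in the case $b\ge 2$: one must check that $U(q_0)$ really embeds into a single copy of $W$, that on it the fibre is reduced and equal to $\{\tau=0\}$ (so that the defining equation is the single factor $s_1=\tau$), and that no node intrudes into $U(q_0)$. All of this follows from the size constraints $|\tau|<N^{-2}$, $N^{-1}<|w|<N$ already exploited in the case $b=1$ together with the cyclic gluing of $\Phi^{-1}(B_\delta(p))$ in Kodaira's normal form, so no new analytic ingredient is needed beyond the case $b=1$; this verification is where the (modest) work lies.
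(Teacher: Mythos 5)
Your argument is correct and is essentially the paper's own proof: the explicit chart $U(q_0)$ computation for $b=1$, showing the potential reduces to $-\log(\log|\tau|)^2$ plus a term that vanishes across $\{\tau=0\}$ once $\log|w|$ is bounded, followed by reducing the general case to the $b$ glued copies of $W$. The paper is in fact terser than you on the $b\ge 2$ bookkeeping (it simply asserts the gluing and concludes); the one point worth making explicit there is that in the $k$-th copy one should use the potential written in that copy's fibre coordinate $w_k$, which differs from the pulled-back expression by a pluriharmonic term (a multiple of $\log|w_k|+ \text{const}\cdot\log|\tau|$) and hence represents the same current away from the fibre.
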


When $A_p=\begin{pmatrix}
    -1 & -b\\
     0 & -1
\end{pmatrix}$
$X_p=\Theta_{00}+\Theta_{01}+\Theta_{10}+\Theta_{11}+
2\Theta_0+2\Theta_1+...+2\Theta_b$
is of type $\mathrm{I}_b^*$, where
\begin{equation}\label{intersection}
\begin{split}
        \Theta_{i}\cdot \Theta_{i+1}&=\mathfrak{q}_i,i=0,...,b-1\\
    \Theta_{0i}\cdot \Theta_0&=\mathfrak{q}_{0i},i=0,1\\
    \Theta_{1j}\cdot \Theta_b&=\mathfrak{q}_{1j},j=0,1
\end{split}
\end{equation}

in this case $\Phi^{-1}(B_\delta(p))-
\bigcup\limits_{i=0}^{b-1}\mathfrak{q}_{i}$ is obtained by gluing $2b$-copies $W_0,...,W_{2b-1}$ of $W$, then folding
this model by gluing $W_k$ with $W_{2b-k}$:
\begin{equation*}
    [\tau,w]_k \leftrightarrow [-\tau, (-1)^kw^{-1}],
    k=0,1...,2b-1
\end{equation*}
where we denote $[\tau,w]_k$ for points of $W_k$,
at last step, take canonical reduction at the fixed points
$[0,\pm 1]_0$ to obtain $\Theta_{0i},i=0,1$
and $[0,\pm \sqrt{-1}^b]_b$ to obtain
$\Theta_{1j},j=0,1$. To achieve the canonical reduction
at $[0,1]_0$, we choice local coordinate $(\tau,z)$
near $(0,0)$, where $e^{2\pi \sqrt{-1}z}=w$, then the folding step expressed as:
\begin{equation*}
    (\tau,z)\leftrightarrow (-\tau,-z)
\end{equation*}
set
\begin{equation*}
\alpha=\tau^2, \beta=z^2, p=\tau/z,q=z/\tau
\end{equation*}
consider the metric $\eta_b$ at coordinates $(\alpha,q),(\beta,p)$:
\begin{equation*}
\begin{split}
     \eta_{b}(\alpha,q)&=\sqrt{-1}\partial\bar{\partial}\left(
    -\log(\log|\alpha|)^2-\frac{4\pi y(\alpha,q)^2}{b\log|\alpha|}
    \right)\\
    y(\alpha,q)^2&=(\text{Im} z)^2<|\alpha q^2|\\
    \eta_{b}(\alpha,q)&=\sqrt{-1}\partial\bar{\partial}\left(
    -\log(\log|\beta p^2|)^2-\frac{4\pi y(\beta,p)^2}{b\log|\beta p^2|}\right)\\
    y(\beta,p)^2&<|\beta|
\end{split}
\end{equation*}
so $\eta_b$ have a
    double logarithmic pole along $\Theta_{00}-\{\mathfrak{q}_{00}\}$, similarly, this metric also have double logarithmic pole along
    $\Theta_{ij},i=0,1, j=0,1$
As a result, we obtain:
\begin{proposition}
    If $X_p=\Theta_{00}+\Theta_{01}+\Theta_{10}+\Theta_{11}+
2\Theta_0+2\Theta_1+...+2\Theta_b$ is of type $\mathrm{I}_b^*$ with
intersection relationship as (\ref{intersection}), then
the semi-flat cscK metric $\eta_X$ have a double logarithmic pole
along $X_p-\bigcup\limits_{i=0}^{b-1}\mathfrak{q}_i$.
\end{proposition}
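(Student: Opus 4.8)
The plan is to assemble the local computations above, supplying the behaviour of $\eta_X$ along the central chain $\Theta_0,\dots,\Theta_b$ that is not covered there, and to keep careful track of which defining functions cut out which components. Recall that, after deleting the $b$ nodes $\mathfrak q_0,\dots,\mathfrak q_{b-1}$, the germ $\Phi^{-1}(B_\delta(p))$ is obtained by gluing $2b$ copies $W_0,\dots,W_{2b-1}$ of the space $W$ along the folding identifications $[\tau,w]_k\leftrightarrow[-\tau,(-1)^k w^{-1}]$ and then performing canonical reduction at the four fixed points $[0,\pm1]_0$ and $[0,\pm\sqrt{-1}^{b}]_b$ to produce the tail curves $\Theta_{00},\Theta_{01},\Theta_{10},\Theta_{11}$. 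On each $W_k$ the restriction of $\eta_X$ is $\eta_b=\sqrt{-1}\partial\bar\partial\bigl(-\log(\log|\tau|)^2-\tfrac{(\log|w|)^2}{2\pi b\log|\tau|}\bigr)$; on any neighbourhood where the fibre coordinate $|w|$ stays bounded away from $0$ and $\infty$ the correction term $-\tfrac{(\log|w|)^2}{2\pi b\log|\tau|}$ is a bounded function that vanishes along $\{\tau=0\}$, so exactly the computation in the proof of Proposition \ref{doublelog1} shows that $\eta_b$ has a double logarithmic pole along the curve $0\times\mathbb C^{*}\subset W_k$.

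First I would check that the folding identifications respect this structure: the map $[\tau,w]\mapsto[-\tau,(-1)^k w^{-1}]$ is a biholomorphism away from its fixed points, it leaves $|\tau|$ unchanged, and $(\log|(-1)^k w^{-1}|)^2=(\log|w|)^2$, so the local potential of $\eta_b$ is invariant under the identification. Hence the double logarithmic pole along $0\times\mathbb C^{*}$ descends to a double logarithmic pole of $\eta_X$ along $\Theta_0\cup\dots\cup\Theta_b$ off the nodes $\mathfrak q_i$, since those $b$ nodes are precisely the points where two consecutive copies $W_k,W_{k+1}$ are glued and the fibre coordinate escapes to $0$ or $\infty$. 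For the four tail curves, and in particular at the tail nodes $\mathfrak q_{0i},\mathfrak q_{1j}$, I would read the answer off the canonical-reduction charts $(\alpha,q)$ and $(\beta,p)$ already written down: there $\eta_b$ equals $\sqrt{-1}\partial\bar\partial$ of a single $\log\log$ term plus the error $-\tfrac{4\pi y^{2}}{b\log|\cdot|}$, and the bounds $y^{2}<|\alpha q^{2}|$, $y^{2}<|\beta|$ (which hold because $|w|$ is bounded near the fixed points) force this error to be a bounded function vanishing along the tail curve. At a tail node one takes a single holomorphic defining function for the reducible germ of $X_p$ — the pullback $\tau$ of the base coordinate will do — and checks that $-\log(\log|\tau|)^2$ differs by a bounded amount from $-\log(\log|s|)^2$ with $s=\tau$, which is legitimate since the definition of a double logarithmic pole permits $m=1$. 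Putting these together covers $X_p\setminus\bigcup_{i=0}^{b-1}\mathfrak q_i$.

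I expect the main obstacle to be the coordinate bookkeeping: in each of the $2b$ copies of $W$ and in each canonical-reduction chart one must pin down which holomorphic function cuts out each irreducible component of $X_p$ and track how the $\log\log$ singularity transforms under the folding and the blow-ups, exactly in the spirit of the node analyses in Lemma \ref{extension1} and Proposition \ref{doublelog1}. The conceptual reason the nodes $\mathfrak q_0,\dots,\mathfrak q_{b-1}$ must be excluded is that there the fibre coordinate $w$ runs to $0$ or $\infty$, so the semi-flat correction $-\tfrac{(\log|w|)^2}{2\pi b\log|\tau|}$ stops being bounded and the double-logarithmic-pole normal form breaks down; those finitely many points are recovered afterwards when the extended current is pushed across them using the Harvey--Polking extension theorem \cite{HaPo75}.
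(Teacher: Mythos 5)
Your proposal is correct and takes essentially the same route as the paper: the paper likewise reduces the double components $\Theta_0,\dots,\Theta_b$ to the $\mathrm{I}_b$ computation via the $2b$ glued copies of $W$ and the folding $[\tau,w]_k\leftrightarrow[-\tau,(-1)^kw^{-1}]$, and handles the four tail components through the canonical-reduction charts $(\alpha,q)$, $(\beta,p)$ using exactly the bounds $y(\alpha,q)^2<|\alpha q^2|$ and $y(\beta,p)^2<|\beta|$ to show the correction term is bounded and the potential is $-\log(\log|\cdot|)^2$ of a single defining function (the pullback $\beta p^2=\tau^2$ of the base coordinate) up to a smooth term. Your additional remarks on the invariance of the potential under the folding and on why the nodes $\mathfrak q_0,\dots,\mathfrak q_{b-1}$ must be excluded and later recovered via Harvey--Polking are consistent with, and slightly more explicit than, the paper's presentation.
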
\label{doublelog2}
Let $S_{\infty}$ be the finite set containing singular points of $X_p$
with $J(p)=\infty$, then we have:
\begin{corollary}
    $\eta_X$ extends to $X\setminus S_{\infty}$ as a closed positive $(1,1)$-current.
\end{corollary}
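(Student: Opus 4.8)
The plan is to combine the extension over $\check X=X\setminus D_\infty$ already provided by Corollary~\ref{extension-first} with a local extension across the regular locus of each fiber lying over a pole of $J$, using the double logarithmic pole structure of Propositions~\ref{doublelog1} and \ref{doublelog2}. By Kodaira's classification \cite{Ko63} every singular fiber $X_p$ with $J(p)=\infty$ is of type $\mathrm I_b$ or $\mathrm I_b^*$, so $D_\infty\setminus S_\infty$ is exactly the union of the smooth loci of these fibers and $\check X\cup(D_\infty\setminus S_\infty)=X\setminus S_\infty$. Thus it suffices to extend $\eta_X$ across $D_\infty\setminus S_\infty$ as a closed positive $(1,1)$-current and to glue the result with the extension on $\check X$.

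First I would fix $q\in D_\infty\setminus S_\infty$, say $q\in X_p$, and choose a coordinate chart $U$ small enough that $\Phi(U)\cap\mathcal P=\{p\}$ and $U\cap D_\infty=C:=X_p\cap U=\{s=0\}$ is a smooth divisor; note then $U\setminus C\subset X^*$. By Propositions~\ref{doublelog1} and \ref{doublelog2} there are a constant $c>0$ and a function $f\in C^\infty(U)$ with
\[
\eta_X=\sqrt{-1}\,\partial\bar\partial\psi \quad\text{on } U\setminus C,\qquad \psi=-c\log(\log|s|)^2+f .
\]
Since $\eta_X$ is a smooth K\"ahler form on $U\setminus C$, the function $\psi$ is smooth and plurisubharmonic there, and it is bounded above near $C$ because $-c\log(\log|s|)^2\to-\infty$ as $s\to0$ while $f$ is bounded. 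By the removable singularity theorem for plurisubharmonic functions across an analytic hypersurface, $\psi$ extends uniquely to a plurisubharmonic function $\tilde\psi\in L^1_{\mathrm{loc}}(U)$ equal to $\psi$ on $U\setminus C$; hence $T_U:=\sqrt{-1}\,\partial\bar\partial\tilde\psi$ is a closed positive $(1,1)$-current on $U$ with $T_U|_{U\setminus C}=\eta_X$. Since $\log(\log|s|)^2$ grows only like $\log\log(1/|s|)=o(\log(1/|s|))$, one moreover gets that $T_U$ has vanishing Lelong number along $C$, consistently with the conclusion already obtained on $\check X$.

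Finally I would check that these local extensions patch. Any two local potentials of $\eta_X$ on overlapping such charts differ, off $C$, by a pluriharmonic function which is $L^1_{\mathrm{loc}}$ across $C$ and hence extends pluriharmonically; therefore $T_U$ does not depend on the chosen potential, and on $U\setminus C\subset X^*$ it agrees with $\eta_X$, hence with the extension $R_*\eta_X$ of Corollary~\ref{extension-first}. Consequently the currents $\{T_U\}$ together with $R_*\eta_X$ glue to a single closed positive $(1,1)$-current on $X\setminus S_\infty$ restricting to $\eta_X$ on $X^*$, which is the assertion. The only genuinely analytic input is the removable singularity theorem for plurisubharmonic functions, whose hypothesis ``locally bounded above near $C$'' is precisely what the double logarithmic pole supplies; I expect the main point to be verifying that this double-logarithmic description (rather than $L^1$-integrability alone) is what makes the argument go through, and noting that it degenerates at the finitely many singular points of the fibers over poles of $J$ — which is exactly why those points $S_\infty$ must be excluded here and are instead treated separately via the Harvey--Polking extension theorem \cite{HaPo75}.
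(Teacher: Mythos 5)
Your proposal is correct and follows essentially the same route as the paper: decompose $X\setminus S_\infty$ into $\check X$ (handled by Corollary \ref{extension-first}) and the smooth loci of the $\mathrm{I}_b$, $\mathrm{I}_b^*$ fibers, then use the double logarithmic pole structure of Propositions \ref{doublelog1} and \ref{doublelog2} to extend the potential across those divisors. The only (minor) divergence is the final analytic step: you invoke the removable-singularity theorem for plurisubharmonic functions bounded above near an analytic hypersurface, whereas the paper instead verifies directly that $\log(\log|s|)^2$ is $L^1$-integrable on a polydisc — the two justifications are interchangeable here, and yours has the small advantage of yielding positivity of the extended current without further comment.
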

\begin{proof}
    It sufficient to show the function $\log(\log |s|)^2$ is $L^1$ integrable
on $\Delta^2_{\epsilon}=\{(s,t)\in \mathbb{C}^2| |s|<\epsilon, |t|<\epsilon\}$,
and the desired existence of $\eta_X$ to $X\setminus S_{\infty}$ is then guaranteed by Corollary \ref{extension-first}, combined with Propositions \ref{doublelog1} and \ref{doublelog2}. Let $-\log|s|= r$, $dvol$ represent the standard Euclidean volume form of $\Delta_{\epsilon}^2$, we have:
\begin{equation*}
\begin{split}
      \int_{\Delta^2_{\epsilon}} \log(\log|s|^2) dvol&=2\pi \epsilon^2
    \int_{-\log\epsilon}^{+\infty}2(\log r )e^{-2r}dr\\
    &<2\pi\epsilon^2\int_{1}^{+\infty} e^{-r}dr\\
    &=\frac{2\pi\epsilon^2}{e}
\end{split}
\end{equation*}
\end{proof}
Finally, to extend $\eta_X$ on the whole $X$ as a closed positive $(1,1)$-current, we need following result due to Reese Harvey and John Polking\cite{HaPo75}:
\begin{theorem}[Harvey, Polking, 1975]\label{extensionfinite}
    Suppose $A$ is a closed subset of $\Omega$ open in $\mathbb{C}^n$. Let
    $p+k=n$. If $u$ is a $d$-closed, positive $p,p$ current on $\Omega-A$
    and $\Lambda_{2k-1}(A)=0$, then the trivial extension $\tilde{u}$ of $u$
    by zero across $A$ exists and $\tilde{u}$ is a $d$-closed, positive
    $p,p$ current on $\Omega$.
\end{theorem}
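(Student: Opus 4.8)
The plan is to prove Theorem~\ref{extensionfinite} in two steps: first show that the positive current $u$ has locally finite mass in a neighbourhood of $A$, so that its trivial extension $\tilde u$ across $A$ is a well-defined positive $(p,p)$-current on all of $\Omega$; then show that $\tilde u$ is $d$-closed, by proving that $d\tilde u$ --- which is supported on $A$ since $du=0$ on $\Omega\setminus A$ --- actually vanishes. Write $\beta=\tfrac{\sqrt{-1}}{2}\sum_j dz_j\wedge d\bar z_j$ for the standard K\"ahler form and let $\sigma_u=\tfrac{1}{k!}\,u\wedge\beta^{k}$ be the trace measure of $u$; positivity of $u$ makes $\sigma_u$ a positive Radon measure on $\Omega\setminus A$, and $u$ has locally finite mass near $A$ precisely when $\sigma_u$ is locally finite there. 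The only hypothesis is the metric condition $\Lambda_{2k-1}(A)=0$, which enters through two consequences: it forces $\Lambda_{2k}(A)=0$ as well (vanishing of $s$-dimensional Hausdorff measure forces vanishing of the $t$-dimensional one for every $t>s$), and it allows, for each compact $K\subset\Omega$ and each $\epsilon>0$, a finite cover of $A\cap K$ by balls $B(a_i,r_i)$ with $a_i\in A$, $r_i\le\epsilon$ and $\sum_i r_i^{\,2k-1}$ as small as we like.

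For Step~1, fix a nonnegative smooth function $\rho$ on $\Omega\setminus A$ comparable to the Euclidean distance to $A$, and choose $\chi_\epsilon\in C^\infty(\Omega)$ with $0\le\chi_\epsilon\le1$, $\chi_\epsilon\equiv 1$ on $\{\rho>\epsilon\}$, $\chi_\epsilon\equiv 0$ on $\{\rho<\epsilon/2\}$ and $|d\chi_\epsilon|\le C/\epsilon$. Writing $\beta^{k}$ locally as $(\sqrt{-1}\partial\bar\partial|z|^{2})^{k}$ and integrating by parts twice, using $du=0$ on $\Omega\setminus A$, one moves the derivatives onto $\chi_\epsilon$; the resulting curvature and boundary terms are then controlled uniformly in $\epsilon$ by the Hausdorff-content coverings above, yielding a bound $\sigma_u(\{\rho>\epsilon\}\cap K)\le C(K)$ independent of $\epsilon$, hence $\sigma_u((\Omega\setminus A)\cap K)<\infty$. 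This finite-mass estimate is the technical heart of the theorem --- it is essentially the main lemma of Harvey--Polking \cite{HaPo75}, later reproved by El~Mir, Sibony and Skoda --- and I expect it, not Step~2, to be the principal obstacle: the delicate point is to run the Lelong-type density estimates $\sigma_u\big(B(a,r)\big)\lesssim r^{2k}$ for points $a\in A$ without circularity, since $u$ is a priori not known to be closed across $A$, so one must bootstrap, obtaining finiteness from $\Lambda_{2k}(A)=0$ first and only afterwards the sharp density bounds. Granting this, $\tilde u$ is a well-defined positive $(p,p)$-current on $\Omega$ and $\chi_\epsilon u\to\tilde u$ weakly by dominated convergence.

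For Step~2, weak continuity of $d$ gives $d\tilde u=\lim_{\epsilon\to0}d(\chi_\epsilon u)$, and since $\mathrm{supp}(\chi_\epsilon u)\subset\Omega\setminus A$, where $du=0$, we have $d(\chi_\epsilon u)=d\chi_\epsilon\wedge u$ as currents on $\Omega$. Hence for $K\subset\Omega$ compact
\[
\mathbf{M}_K(d\tilde u)\ \le\ \liminf_{\epsilon\to0}\ C\,\|d\chi_\epsilon\|_{L^\infty}\,\sigma_u\big(\{\rho<\epsilon\}\cap K\big)\ \le\ \liminf_{\epsilon\to0}\ \tfrac{C}{\epsilon}\,\sigma_u\big(\{\rho<\epsilon\}\cap K\big).
\]
Covering $A\cap K$ by balls $B(a_i,\epsilon)$ with $a_i\in A$ and total count $N_\epsilon$ satisfying $N_\epsilon\epsilon^{2k-1}\to0$ (possible since $\Lambda_{2k-1}(A)=0$), we get $\{\rho<\epsilon\}\cap K\subset\bigcup_i B(a_i,2\epsilon)$, and the density bound from Step~1 gives $\sigma_u\big(B(a_i,2\epsilon)\big)\le C\epsilon^{2k}$, so $\sigma_u\big(\{\rho<\epsilon\}\cap K\big)\le C\,N_\epsilon\epsilon^{2k}=C\,(N_\epsilon\epsilon^{2k-1})\,\epsilon=o(\epsilon)$. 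Plugging this in yields $\mathbf{M}_K(d\tilde u)=0$, i.e.\ $d\tilde u=0$ on $\Omega$. Finally positivity and the $(p,p)$-type pass to the weak limit, so $\tilde u$ is a $d$-closed positive $(p,p)$-current on $\Omega$ restricting to $u$ on $\Omega\setminus A$, as claimed. (If one prefers, it is of course enough to quote \cite{HaPo75} directly.)
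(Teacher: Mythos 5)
First, note that the paper does not prove Theorem \ref{extensionfinite} at all: it is imported verbatim from Harvey--Polking \cite{HaPo75} and used as a black box, so there is no internal proof to compare yours against. Your parenthetical remark at the end --- that it suffices to quote \cite{HaPo75} directly --- is exactly what the paper does, and is the appropriate course here.

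As a free-standing argument, your sketch identifies the correct two-step strategy (local mass bound near $A$, then vanishing of $d\tilde u$), but both steps have genuine gaps at precisely the hard points. In Step~1 the integration-by-parts argument is too loose to check: the claim that the ``curvature and boundary terms are controlled uniformly in $\epsilon$ by the Hausdorff-content coverings'' is the entire content of the Chern--Levine--Nirenberg-type estimate in \cite{HaPo75}, and you acknowledge that you are deferring it to the literature rather than supplying it. More seriously, Step~2 as written depends on the uniform density estimate $\sigma_u\bigl(B(a,2\epsilon)\bigr)\le C\epsilon^{2k}$ at points $a\in A$; the Lelong monotonicity formula that would yield this requires the current to be closed on a full ball around $a$, which is exactly what is being proved, and although you flag this circularity you never break it. One must either establish an almost-monotonicity inequality whose error term is controlled by the mass of $d\tilde u$ and run a bootstrap, or --- the cleaner standard route --- observe that $d\tilde u$ is a flat current of dimension $2k-1$ supported on a set with $\Lambda_{2k-1}=0$ and invoke Federer's support theorem, which bypasses the density bound entirely. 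Note also that merely knowing $\sigma_{\tilde u}(A)=0$ gives $\sigma_u(\{\rho<\epsilon\}\cap K)=o(1)$, which is not enough to beat the factor $C/\epsilon$ coming from $\|d\chi_\epsilon\|_{L^\infty}$; so the density bound (or the support theorem) is genuinely needed and not a cosmetic refinement.
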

where $\Lambda_{2k-1}$ here denote the $2k-1$ dimensional Hausdorff measure on
$\mathbb{C}^n=\mathbb{R}^{2n}$.

In our case $A=S_{\infty}$ is a finite set, and $n=2,k=1$, so $\Delta_1(S_{\infty})=0$, as a result, we have:
\begin{corollary}
    $\eta_X$ extends to $X$ as a closed positive $(1,1)$-current.
\end{corollary}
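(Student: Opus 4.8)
The plan is to invoke the Harvey--Polking extension theorem, Theorem \ref{extensionfinite}, with exceptional set $A = S_\infty$. The preceding corollary already exhibits a closed positive $(1,1)$-current, still denoted $\eta_X$, on the open manifold $X \setminus S_\infty$, so the only task left is to extend it across the finite point set $S_\infty$. Since the extension problem is local, I would cover $S_\infty$ by finitely many coordinate charts, each biholomorphic to an open subset $\Omega \subset \mathbb{C}^2$ containing exactly one point of $S_\infty$, placed at the origin, and apply the theorem chart by chart.

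In the notation of Theorem \ref{extensionfinite} we have $n = 2$, and since $\eta_X$ is a $(1,1)$-current we take $p = 1$, hence $k = n - p = 1$. The hypothesis to verify is $\Lambda_{2k-1}(A) = \Lambda_1(\{0\}) = 0$, which is immediate because a single point has vanishing $1$-dimensional Hausdorff measure (and a finite union of such is still null). The theorem then produces the trivial extension $\widetilde{\eta_X}$ of $\eta_X$ by zero across the origin, a $d$-closed positive $(1,1)$-current on $\Omega$. These local extensions agree with $\eta_X$ on the chart overlaps, which contain no point of $S_\infty$, so they glue to a globally defined closed positive $(1,1)$-current on $X$ whose restriction to $X \setminus S_\infty$ is $\eta_X$.

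There is essentially no obstacle at this stage: the substantive analysis — that $\eta_X$ has at worst double logarithmic poles along the smooth loci of the $\mathrm{I}_b$ and $\mathrm{I}_b^*$ fibers (Propositions \ref{doublelog1} and \ref{doublelog2}), that the local potential $\log(\log|s|)^2$ is $L^1$-integrable, and hence that $\eta_X$ already extends across $X \setminus S_\infty$ — was completed in the preceding results. The present statement is the formal final step, and the only point requiring attention is the elementary verification that the finite exceptional set $S_\infty$ has zero $(2k-1)$-dimensional, i.e. $1$-dimensional, Hausdorff measure, which meets the hypothesis of the Harvey--Polking theorem.
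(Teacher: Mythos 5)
Your proposal matches the paper's argument: it applies the Harvey--Polking theorem (Theorem \ref{extensionfinite}) with $A=S_\infty$, $n=2$, $p=k=1$, noting that a finite set has vanishing $1$-dimensional Hausdorff measure, on top of the already-established extension to $X\setminus S_\infty$. Your version merely spells out the localization to coordinate charts and the gluing, which the paper leaves implicit.
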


\section{The associated class}
In this subsection, we try to compute the explicit class of the current $\eta_X$ which we obtained by extension of $\eta_X$ to $X$ in the section 4. In the decomposition (\ref{decomposition of H_2}):
\begin{definition}
    A nonzero class $\alpha\in H_{\mathbb{R}}^{1,1}(X)$ is called algebraic if $\alpha\in NS(X)\otimes \mathbb{R}$ and is called transcendental, if $\alpha\in (NS(X)\otimes \mathbb{Q})^{\perp}$
\end{definition}
A transcendental class $\alpha \in (NS(X)\otimes \mathbb{Q})^{\perp}$ can be represented
by a Poincar\'e 2-cycle of $X^*$, indeed we can obtain a series of generators of transcendental cycles using the monodromy of the fibration:

Step 1: Take $\gamma\in \pi_1(\Sigma_g^*,\sigma_0)$ such that $A_{\gamma}=\rho(\gamma)$ has an eigenvalue $1$, and choose a nonzero $\delta\in H_1(X_{_{\sigma_0}},\mathbb{Z})$ with $A_{\gamma}\cdot \delta=\delta$.

Step 2: Take a continuously varing of 1-cycles $\delta_t\in H_1(X_{\gamma(t)},\mathbb{Z}),\delta_0=\delta$ along $\gamma(t)$, then $\delta_1=\delta_0$ by our choice of $\delta$.

The union of these continuously varying 1-cycles $\cup\delta_t$, denoted by $C(\gamma,\delta)$ forms a 2-cycle of $X^*$. By construction, $C(\gamma,\delta)$ is disjoint from all singular fibers, so its intersection number with any irreducible component $\Theta_{pi}$ of a singular fiber $X_p$ is zero. Furthermore, the representative cycle for $\delta$ in $X_{\sigma_0}$ can be chosen to avoid the point $s(\sigma_0)$ which marked by the zero section $s$, this avoidance can be maintained during the continuous parallel transport along $\gamma$, as a result $[C(\gamma,\delta)]\cdot [s]=0$, so $[C(\gamma,\delta)]$ is transcendental.

\begin{lemma}\label{transcendental basis}
    For any $[C(\gamma,\delta)]$, we have
    \begin{equation*}
        \int_{[C(\gamma,\delta)]}\eta_X=0
    \end{equation*}
\end{lemma}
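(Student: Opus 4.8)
\emph{Proof strategy.} The plan is to pull everything back to the universal cover $\mathbb{H}^+\times\mathbb{C}$ and to use a clean rewriting of the semi-flat metric in fibrewise real coordinates. Recall from the proof of Lemma \ref{G.I} that on $\mathbb{H}^+\times_{Id}\mathbb{C}$, with $\zeta=u+\sqrt{-1}v$ and $z=x+\sqrt{-1}y$, one has $\eta=\eta_1+\eta_2$, where $\eta_1=\tfrac{\sqrt{-1}}{2}v^{-2}\,d\zeta\wedge d\bar\zeta$ is the pull-back of the Poincar\'e metric of the base and $[\eta_2]=\left(\begin{smallmatrix} y^2/v^3 & -y/v^2\\ -y/v^2 & 1/v\end{smallmatrix}\right)$ in the coframe $(d\zeta,dz)$. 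The first step is the elementary identity
\begin{equation*}
 \eta_2=da\wedge db,\qquad b:=\frac{y}{v},\quad a:=x-\frac{uy}{v},
\end{equation*}
so that $z=a+b\zeta$ and $(a,b)$ are the real-affine fibre coordinates; this is checked by expanding both sides in $du,dv,dx,dy$ (using $d\zeta\wedge d\bar\zeta=-2\sqrt{-1}\,du\wedge dv$ and the analogous formulas). Conceptually it just records that $\eta$ is flat along the fibres when they are written in affine coordinates. Applying $h_{\tilde\omega}^{*}$ and lifting to the cover $\mathbb{H}^+\times\mathbb{C}$ of $\mathbb{H}^+\times_{\tilde\omega}\mathbb{C}$ via the commutative diagram \eqref{R.D} and the definition \eqref{etaX}, the metric $\eta_X$ pulls back to $\tilde\omega^{*}\eta_1+d\tilde a\wedge d\tilde b$, where now $\tilde a,\tilde b$ are the real coordinates determined by $z=\tilde a+\tilde b\,\tilde\omega(\zeta)$.

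Next I would write down an explicit square representing $C(\gamma,\delta)$. Fix a lift $\tilde\gamma\colon[0,1]\to\mathbb{H}^+$ of $\gamma$ and let $\delta$ correspond to the integer vector $(m,n)$, which by hypothesis is fixed by $A_\gamma=\rho(\gamma)$; then the parallel family $\delta_t$ is represented by the loop $s\mapsto[\tilde\gamma(t),\,s(m\tilde\omega(\tilde\gamma(t))+n)]$, $s\in[0,1]$, and the relation $A_\gamma\cdot\delta=\delta$ is exactly what makes
\begin{equation*}
 G\colon[0,1]^2\longrightarrow\mathbb{H}^+\times\mathbb{C},\qquad G(t,s)=\bigl(\tilde\gamma(t),\,s(m\tilde\omega(\tilde\gamma(t))+n)\bigr),
\end{equation*}
descend to a genuine $2$-cycle representing $[C(\gamma,\delta)]$: the two edges $s=0,1$ map to the zero section over $\gamma$ with opposite orientations, while the two edges $t=0,1$ represent $\delta_0$ and $\delta_1=\delta$ with opposite orientations, so the boundary cancels. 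Hence $\int_{C(\gamma,\delta)}\eta_X=\int_{[0,1]^2}G^{*}\bigl(\tilde\omega^{*}\eta_1+d\tilde a\wedge d\tilde b\bigr)$. The first summand vanishes because the composite of $G$ with the first-factor projection and $\tilde\omega$ depends only on $t$, so $G^{*}(\tilde\omega^{*}\eta_1)$ factors through the one-dimensional path $\tilde\gamma$ and is therefore zero. For the second summand, the $\mathbb{R}$-linear independence of $1$ and $\tilde\omega(\tilde\gamma(t))$ together with $z=\tilde a+\tilde b\,\tilde\omega(\zeta)$ forces $\tilde a\circ G=sn$ and $\tilde b\circ G=sm$; thus $d(\tilde a\circ G)\wedge d(\tilde b\circ G)=(n\,ds)\wedge(m\,ds)=0$. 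Therefore $G^{*}\eta_X\equiv 0$ and the integral vanishes.

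The only delicate points are bookkeeping. First, the identity $\eta_2=da\wedge db$ has to be verified by the short explicit computation indicated above; this is the one place where one actually computes. Second, one must make sure that the square $G$ represents the class $[C(\gamma,\delta)]$ rather than a multiple or a twist of it, which follows from the eigenvector condition on $\delta$ exactly as in Steps 1–2 preceding the statement. There is no analytic obstacle: $\eta_X$ is a smooth K\"ahler form on $X^{*}$ and $C(\gamma,\delta)\subset X^{*}$ is disjoint from all singular fibres, so $G^{*}\eta_X$ is a bona fide smooth $2$-form on $[0,1]^2$ which we have shown to be identically zero. The essential mechanism is simply that the fibre part $da\wedge db$ of the semi-flat metric restricts to zero on any surface obtained by translating a fixed homology $1$-cycle of the fibre torus, because along such a surface $a$ and $b$ both move along a single straight ray.
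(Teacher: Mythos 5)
Your proof is correct, but the final step is genuinely different from the paper's. The key identity you rely on does hold: with $b=y/v$ and $a=x-uy/v$ (so $z=a+b\zeta$) one computes
\begin{equation*}
da\wedge db=\frac{y^2}{v^3}\,du\wedge dv+\frac{1}{v}\,dx\wedge dy-\frac{y}{v^2}\bigl(du\wedge dy+dx\wedge dv\bigr),
\end{equation*}
which is exactly the real expression of the fibre part $\eta_2$ of the semi-flat form, so $G^{*}\eta_X$ vanishes \emph{pointwise} on the parametrizing square because $\tilde a\circ G$ and $\tilde b\circ G$ are both functions of $s$ alone and the base part factors through the one-dimensional path $\tilde\gamma$. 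The paper instead first conjugates the monodromy by $h_Q$ to reduce to $A_\gamma=\left(\begin{smallmatrix}1&n\\0&1\end{smallmatrix}\right)$ and the straightened square $\hat\delta(t,s)=[\tilde\zeta(t),s]$, then writes $\eta=d\phi$ for an explicit primitive $1$-form $\phi$ and applies Stokes, obtaining cancellation of the four boundary integrals. Your route buys a cleaner conceptual statement (the fibre part of a semi-flat metric restricts to zero on any surface swept out by translating a fixed affine ray in the fibres) and avoids both the conjugation step and the choice of primitive; the paper's route avoids introducing the affine fibre coordinates and works directly with the complex potential. The only point needing care in your version is the bookkeeping you already flag: the identification of $\delta$ with a monodromy-fixed integer vector $(m,n)$ and the check that the edges $t=0,1$ of $G$ are glued by the deck transformation (which uses $(m,n)$ being fixed under the appropriate side of the $SL_2(\mathbb{Z})$-action, matching the paper's use of the bottom row $(c,d)$ of $Q$); this is the same level of care the paper itself exercises.
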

\begin{proof}
    since $A_{\gamma}=\rho([\gamma])$ has eigenvalue $1$, there exists $Q\in SL_2(\mathbb{Z}),n\in \mathbb{Z}$, such that
    \begin{equation*}
        A_{\gamma}=Q^{-1}\begin{pmatrix}
            1 & n\\
            0 & 1
        \end{pmatrix}Q,\ Q=\begin{pmatrix}
            a & b \\
            c & d
        \end{pmatrix}
    \end{equation*}

Let $\tilde{\gamma}(t),0\le t\le 1$ be a lift of $\gamma$ via the covering
$\pi:H^+\rightarrow \Sigma_g^*$, let $\zeta(t)=\tilde{\omega}(\tilde{\gamma}(t))$, then
\begin{equation*}
    \zeta(1)=A_{\gamma}\cdot \zeta_0
\end{equation*}
Let
\begin{equation*}
    \tilde{\delta}(t,s)=[\tilde{\gamma}(t),s\cdot (c\zeta(t)+d)]\in H^+\times_{\tilde{\omega}}\mathbb{C},0\le s \le 1
\end{equation*}
then
\begin{equation*}
    \tilde{\delta}(1,s)=\gamma\cdot \tilde{\delta}(0,s)
\end{equation*}
as a result
\begin{equation*}
    [\tilde{\delta}(1,s)]=[\tilde{\delta}(0,s)]\in X^*=H^+\times_{\tilde{\omega}}\mathbb{C}/\pi_1(\Sigma_g^*,\sigma_0)
\end{equation*}

and $\delta=[\tilde{\delta}(0,s)]$ is an eigenvector of the elliptic monodromy
around $\gamma$ w.r.t eigenvalue $1$. We have:
\begin{equation*}
     \begin{split}
\int_{[C(\gamma,\delta)]}\eta_X &=
\int_{[0,1]\times[0,1]}\tilde{\delta}^*\eta_X\\
  &= \int_{[0,1]\times[0,1]}\tilde{\delta}^*h_{\tilde{\omega}}^*\eta\\
  &=\int_{h_{\tilde{\omega}}\circ \tilde{\delta}([0,1]\times[0,1])}\eta
     \end{split}
\end{equation*}
where:
\begin{equation*}
    h_{\tilde{\omega}}\circ \tilde{\delta}(t,s)=[\zeta(t),s\cdot(c\zeta(t)+d)]
    \in H^+\times_{Id}\mathbb{C}
\end{equation*}
$Q$ induce the action $h_{Q}$ on $H^+\times_{Id}\mathbb{C}:$ $[\zeta,z]\rightarrow [Q\cdot \zeta,z/(c\zeta+d)]$, and we have:
\begin{equation*}
\begin{split}
      h_{Q}([\zeta(t),s\cdot(c\zeta(t)+d)])&=[Q\cdot \zeta(t),s]\\
    h_Q^*\eta&=\eta
\end{split}
\end{equation*}
Let $\tilde{\zeta}(t)=Q\cdot \zeta(t), \hat{\delta}(t,s)=[\tilde{\zeta}(t),s]$,  we have:
\begin{equation*}
\begin{split}
\tilde{\zeta}(1)&=\tilde{\zeta}(0)+n\\
     \int_{[C(\gamma,\delta)]}\eta_X&=\int_{\hat{\delta}([0,1]\times[0,1])}\eta\\
\end{split}
\end{equation*}
since
\begin{equation*}
\begin{split}
      \eta&=\sqrt{-1}\partial\bar{\partial}(-\log v^2+y^2/v)=d\phi\\
    where\ \phi&=
    \left(\frac{1}{v}+\frac{y^2}{2v^2}\right)d\bar{\zeta}-\frac{y}{v}d\bar{z}
\end{split}
\end{equation*}
The boundary of $\hat{\delta}([0,1]\times[0,1])$ are curves:
\begin{equation*}
    \begin{split}
        \gamma_1(t)&=[\tilde{\zeta}(t),0]\\
         \gamma_2(t)&=[\tilde{\zeta}(t),1]\\
         \gamma_3(s)&=[\tilde{\zeta}(0),s]\\
         \gamma_4(s)&=[\tilde{\zeta}(1),s]\\
         0\le t&,s\le 1
    \end{split}
\end{equation*}

\begin{equation*}
    \begin{split}
        \int_{\hat{\delta}([0,1]\times[0,1])}\eta&=
        \int_{\gamma_1}\phi-\int_{\gamma_2}\phi-\int_{\gamma_3}\phi+\int_{\gamma_4}\phi\\
        &=\int_{o}^1\frac{1}{Im\tilde{\zeta}(t)}d\bar{\tilde{\zeta}}(t)-\int_{o}^1\frac{1}{Im\tilde{\zeta}(t)}d\bar{\tilde{\zeta}}(t)\\
        &=0
    \end{split}
\end{equation*}
\end{proof}

\begin{corollary}
    $[\eta_X]\in H_{\mathbb{R}}^{1,1}(X)$ is algebraic
\end{corollary}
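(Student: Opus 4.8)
The plan is to show that, in the orthogonal decomposition (\ref{decomposition of H_2}) $H^{1,1}_{\mathbb{R}}(X)=(NS(X)\otimes\mathbb{R})\oplus(NS(X)\otimes\mathbb{Q})^{\perp}$, the transcendental component of $[\eta_X]$ vanishes. Write $[\eta_X]=\alpha_a+\alpha_t$ with $\alpha_a\in NS(X)\otimes\mathbb{R}$ and $\alpha_t\in(NS(X)\otimes\mathbb{Q})^{\perp}$. First I would record two linear-algebra facts: the decomposition is cup-orthogonal (by definition $(NS(X)\otimes\mathbb{Q})^{\perp}$ is orthogonal to $NS(X)\otimes\mathbb{Q}$, hence to $NS(X)\otimes\mathbb{R}$), and the cup product is non-degenerate on $(NS(X)\otimes\mathbb{Q})^{\perp}$ — indeed on a surface $H^{1,1}_{\mathbb{R}}(X)$ is cup-orthogonal to $(H^{2,0}\oplus H^{0,2})_{\mathbb{R}}$ and both carry non-degenerate forms, while the Hodge index theorem makes the form non-degenerate on $NS(X)\otimes\mathbb{R}$, so it is non-degenerate on the orthogonal complement $(NS(X)\otimes\mathbb{Q})^{\perp}$ inside $H^{1,1}_{\mathbb{R}}(X)$ as well. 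Consequently it suffices to prove that $[\eta_X]\cdot w=0$ for every $w\in(NS(X)\otimes\mathbb{Q})^{\perp}$, because then $\alpha_t\cdot w=([\eta_X]-\alpha_a)\cdot w=0$ for all such $w$, and non-degeneracy forces $\alpha_t=0$.

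To produce such $w$'s I would use the generating set from the discussion preceding Lemma \ref{transcendental basis}: $(NS(X)\otimes\mathbb{Q})^{\perp}$ is spanned over $\mathbb{R}$ by the Poincar\'e duals of the $2$-cycles $C(\gamma,\delta)\subset X^{*}$ swept out by a monodromy-invariant $1$-cycle $\delta\in H_1(X_{\sigma_0},\mathbb{Z})$, $\rho(\gamma)\delta=\delta$, along a loop $\gamma\in\pi_1(\Sigma_g^{*},\sigma_0)$. Since each $C(\gamma,\delta)$ lies in $X^{*}$, where the extended current $\eta_X$ is still the smooth semi-flat cscK metric, the pairing $[\eta_X]\cdot[C(\gamma,\delta)]=\int_{C(\gamma,\delta)}\eta_X$ is unambiguous and vanishes by Lemma \ref{transcendental basis}. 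Hence $[\eta_X]$ annihilates a spanning set of $(NS(X)\otimes\mathbb{Q})^{\perp}$, so $\alpha_t=0$ and $[\eta_X]=\alpha_a\in NS(X)\otimes\mathbb{R}$ is algebraic.

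The only non-formal input is the spanning statement for the classes $[C(\gamma,\delta)]$, so that is the step I expect to be the main obstacle: one must know that these families of invariant $1$-cycles exhaust $(NS(X)\otimes\mathbb{Q})^{\perp}$, not merely sit inside it. I would either quote this from the lattice review (as the paragraph before Lemma \ref{transcendental basis} already asserts) or deduce it from the Leray/Shioda--Tate decomposition of $H^2(X,\mathbb{Q})$ into its fixed part — $[F]$, $[s]$, and the components $[\Theta_{pi}]$ of reducible fibers, all algebraic — and its variable part $H^1(\Sigma_g,R^1\Phi_*\mathbb{Q})$, whose algebraic sub-part is precisely the image of the Mordell--Weil group under the Shioda homomorphism $\psi$ (cf. \cite{Shioda90}); removing that algebraic sub-part from the variable cohomology leaves exactly $(NS(X)\otimes\mathbb{Q})^{\perp}$, and the variable cohomology is topologically realized by exactly the cycles $C(\gamma,\delta)$. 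Everything else in the argument is routine linear algebra together with Lemma \ref{transcendental basis}.
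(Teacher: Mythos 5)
Your proposal is correct and follows essentially the same route as the paper: the paper's proof is the one-line observation that the transcendental part of $H^{1,1}_{\mathbb{R}}(X)$ is spanned by the Poincar\'e duals of the cycles $C(\gamma,\delta)$, on which $[\eta_X]$ vanishes by Lemma \ref{transcendental basis}. You merely make explicit the linear algebra the paper leaves implicit (non-degeneracy of the intersection form on $(NS(X)\otimes\mathbb{Q})^{\perp}$, which is needed to conclude that the transcendental component of $[\eta_X]$ is actually zero), and you correctly flag the spanning claim as the only non-formal input.
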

\begin{proof}
    This following from the lemma \ref{transcendental basis} and the fact that the transcendantal part of $H^{1,1}_{\mathbb{R}}(X)$ is spanned by $C(\gamma,\delta)$
we constructed preciously.
\end{proof}

Therefore:
\begin{equation*}
    [\eta_X]\in NS(X)\otimes \mathbb{R}
\end{equation*}
We can split $[\eta_X]$ as
\begin{equation*}
     [\eta_X]=T_X+N_X+E_X
\end{equation*}
where:
\begin{equation*}
    T_X=([\eta_X]\cdot [s],[\eta_X]\cdot [F])\cdot \begin{pmatrix}
        -\chi & 1\\
        1 & 0
    \end{pmatrix}^{-1}\cdot\begin{pmatrix}
        [s]\\
        [F]
    \end{pmatrix}
\end{equation*}

\begin{equation*}
    N_X=\sum_{p}N_p,\ \ N_p=([\eta_X]\cdot [\Theta_{p1}],...,[\eta_X]\cdot [\Theta_{p,m_{p-1}}])R_p^{-1}\begin{pmatrix}
        \Theta_{p1}\\
        \vdots\\
        \Theta_{p,m_{p}-1}
    \end{pmatrix}
\end{equation*}

\begin{equation*}
    E_X=([\eta_X]\cdot \psi(s_1),...,[\eta_X]\cdot\psi(s_r))\cdot E^{-1}\cdot \begin{pmatrix}
        \psi(s_1)\\
        \vdots\\
        \psi(s_r)
    \end{pmatrix}
\end{equation*}

(1) $T_X$: since $\eta_X$ has fiber volume $1$, so $[\eta_X]\cdot [F]=1$,
$[\eta_X]\cdot [s]=\int_{\Sigma_g^*}\omega^*\eta_{-1}=d\pi/3$, where
$\omega:\Sigma_g^*\rightarrow H^+$ is the multi-valued elliptic-period function with $j\circ \omega=J$ and $\eta_{-1}=\frac{du\wedge dv}{v^2}$ is the standard Poincar\'e metric with Gauss curvature $-1$, $d$ is the degree of the Jacobian $J$. We have:
\begin{equation*}
    T_X=[s]+\left(\frac{\pi d}{3}+\chi\right)[F]
\end{equation*}

(2) $N_X=\sum_pN_p$:

(2.1) $X_p$ is of type $\mathrm{III}$,
\begin{equation*}
\begin{split}
      X_p&=\Theta''_0+\Theta''_1\\
      [\Theta_0'']\cdot [s]&=1,
      [\Theta_0'']\cdot [\Theta_1'']=2, [\Theta_0'']^2=[\Theta_1'']^2=-2
\end{split}
\end{equation*}
which is obtained by reduction of $X_p'$ as following:
\begin{equation*}
\begin{split}
      \xymatrix{
\tilde{X}_p=4\Theta+\Theta_0+\Theta_1+2\Theta_2
\ar^{R_1}[d]
\ar@/_5pc/_{R}[dd]\\
\Theta_0'+\Theta_1'+2\Theta_2'
\ar^{R_2}[d]\\
\Theta_0''+\Theta_1''
    }\\
    R^{-1}(\Theta_0'')=R_1^{-1}(\Theta_0'+\Theta_2')
    =2\Theta+\Theta_0+\Theta_2\\
    R^{-1}(\Theta_1'')=R_1^{-1}(\Theta_1'+\Theta_2')
    =2\Theta+\Theta_1+\Theta_2
\end{split}
\end{equation*}
we have:
\begin{equation*}
    \begin{split}
        [\eta_X]\cdot [\Theta_1'']=[\eta_X']\cdot (2\Theta+\Theta_1+\Theta_2)=\frac{1}{2}
    \end{split}
\end{equation*}
so in this case:
\begin{equation*}
    N_p=-\frac{1}{4}\Theta_1''
\end{equation*}
(2.2) $X_p$ is of type $\mathrm{IV}$:
\begin{equation*}
    \begin{split}
        X_p&=\Theta_0'+\Theta_1'+\Theta_2',\
        [\Theta_0']\cdot [s]=1\\
    \end{split}
\end{equation*}
the intersection matrix of $\Theta_1',\Theta_2'$ is:
\begin{equation*}
\setlength{\arraycolsep}{1pt}
R_p=
    \begin{pmatrix}
        -2 & 1 \\
        1 & -2
    \end{pmatrix}
\end{equation*}
$X_p$ is obtianed by reduction of $X'_p$ as
following:
\begin{equation*}
    \begin{split}
        \xymatrix{
    \check{X}_p=3\Theta+\Theta_0+\Theta_1+\Theta_2
    \ar^{R}[d]\\
        \Theta_0'+\Theta_1'+\Theta_2'
        }\\
         R^{-1}(\Theta_i')=\Theta+\Theta_i,i=0,1,2
    \end{split}
\end{equation*}
we have:
\begin{equation*}
    [\eta_X]\cdot [\Theta_i']=[\eta_X']\cdot ([\Theta]+[\Theta_i'])
    =\frac{1}{3}
\end{equation*}
so:
\begin{equation*}
    N_p=(1/3,1/3)\cdot R_p^{-1}\cdot\begin{pmatrix}
        \Theta_1'\\
        \Theta_2'
    \end{pmatrix}
    =-\frac{1}{3}\Theta_1'-\frac{1}{3}\Theta_2'
\end{equation*}
(2.3) $X_p$ is of type $\mathrm{IV}^*$:
\begin{equation*}
X_p=\Theta_0+2\Theta_1+3\Theta_2+2\Theta_3+2\Theta_4
    +\Theta_5+\Theta_6,\ [\Theta_0]\cdot [s]=1
\end{equation*}
the intersection matrix of $\Theta_1,...,\Theta_6$ is:
\begin{equation*}
    \setlength{\arraycolsep}{1pt}
    R_p=
\begin{pmatrix}
        -2 & 1 &   &   &   &   \\
        1 & -2 & 1 & 1 &   &   \\
          & 1 & -2 &   & 1 &   \\
          & 1 &   & -2 &   & 1 \\
          &   & 1 &   & -2 &   \\
          &   &   & 1 &   & -2
    \end{pmatrix}
\end{equation*}
we have:
\begin{equation*}
    [\eta_X]\cdot(\Theta_1,...,\Theta_6)= (0,1/3,0,...,0)
\end{equation*}
so:
\begin{equation*}
\begin{split}
    N_p&=(0,1/3,0,...,0)\cdot R_p^{-1}\cdot \begin{pmatrix}
        \Theta_1\\
        \vdots\\
        \Theta_6
    \end{pmatrix}\\
   &=-\Theta_1-2\Theta_2-\frac{4}{3}
        \Theta_3-\frac{4}{3}\Theta_4-\frac{2}{3}\Theta_5-\frac{2}{3}\Theta_6
\end{split}
\end{equation*}
(2.4) $X_p$ is of type $\mathrm{\mathrm{III^*}}$:
\begin{equation*}
    X_p=\Theta_0+2\Theta_1+3\Theta_2+
4\Theta_3+3\Theta_4+2\Theta_5+2\Theta_6
+\Theta_7,\ [\Theta_0]\cdot [s]=1
\end{equation*}
the intersection matrix of $\Theta_1,...,\Theta_7$ is:
\begin{equation*}
    \setlength{\arraycolsep}{1pt}
    R_p=\begin{pmatrix}
-2&1& & & & & \\
1&-2&1& & & & \\
 &1&-2&1&1& & \\
 & &1&-2& &1& \\
 & &1& &-2& & \\
 & & &1& &-2&1\\
 & & & & &1&-2
\end{pmatrix}
\end{equation*}
we have:
\begin{equation*}
    [\eta_X]\cdot (\Theta_1,...,\Theta_7)=(0,0,1/4,0,...,0)
\end{equation*}
so:
\begin{equation*}
\begin{split}
     N_p&=(0,0,1/4,0,...,0)\cdot R_p^{-1}\cdot \begin{pmatrix}
        \Theta_1\\
        \vdots\\
        \Theta_7
    \end{pmatrix}\\
    &=-\Theta_1-2\Theta_2-3\Theta_3-\frac{9}{4}\Theta_4-\frac{3}{2}\Theta_5-\frac{3}{2}\Theta_6-\frac{3}{4}\Theta_7
\end{split}
\end{equation*}
(2.5) $X_p$ is of type $\mathrm{II}^*$:
\begin{equation*}
\begin{split}
X_p=\Theta_0+2\Theta_1+3\Theta_2+4\Theta_3+5\Theta_4&+6\Theta_5+4\Theta_6+3\Theta_7+2\Theta_8\\
\ [\Theta_0]\cdot [s]&=1
\end{split}
\end{equation*}
the intersection matrix of $\Theta_1,...,\Theta_8$ is:
\begin{equation*}
    \setlength{\arraycolsep}{1pt}
    R_p=\begin{pmatrix}
-2&1& & & & & & \\
1&-2&1& & & & & \\
 &1&-2&1& & & & \\
 & &1&-2&1& & & \\
 & & &1&-2&1&1& \\
 & & & &1&-2& &1\\
 & & & &1& &-2& \\
 & & & & &1& &-2
\end{pmatrix}
\end{equation*}
we have:
\begin{equation*}
    [\eta_X]\cdot (\Theta_1,...,\Theta_8)=(0,0,0,0,1/6,0,0,0)
\end{equation*}
so:
\begin{equation*}
\begin{split}
     N_p&=(0,0,0,0,1/6,0,0,0)\cdot R_p^{-1}\cdot
     \begin{pmatrix}
        \Theta_1\\
        \vdots\\
        \Theta_8
    \end{pmatrix}\\
    &=-\Theta_1-2\Theta_2-3\Theta_3-4\Theta_4-5\Theta_5-\frac{10}{3}\Theta_6-\frac{5}{2}\Theta_7-\frac{5}{3}\Theta_8
\end{split}
\end{equation*}
(2.5) $X_p$ is of type $\mathrm{I_0^*}$:
\begin{equation*}
     X_p=\Theta_0+\Theta_1+\Theta_2+\Theta_3+2\Theta_4,\
    [\Theta_0]\cdot [s]=1
\end{equation*}
the intersection matrix of $\Theta_1,...,\Theta_4$ is:
\begin{equation*}
     \setlength{\arraycolsep}{1pt}
    R_p= \begin{pmatrix}
-2& & &1\\
 &-2& &1\\
 & &-2&1\\
1&1&1&-2
\end{pmatrix}
\end{equation*}
we have:
\begin{equation*}
    [\eta_X]\cdot (\Theta_1,...,\Theta_4)=(0,0,0,1/2)
\end{equation*}
so:
\begin{equation*}
\begin{split}
     N_p&=(0,0,0,1/2)\cdot R_p^{-1}\cdot \begin{pmatrix}
        \Theta_1\\
        \Theta_2\\
        \Theta_3\\
        \Theta_4
    \end{pmatrix}\\
    &=-\frac{1}{2}\Theta_1-\frac{1}{2}\Theta_2-\frac{1}{2}\Theta_3-\Theta_4
\end{split}
\end{equation*}
(2.6) $X_p$ is of type $\mathrm{I}_b^*,b\ge 1$:
\begin{equation*}
X_p=\Theta_0+\Theta_1+\Theta_2+\Theta_3+2\Theta_4+...+2\Theta_{4+b},\ [\Theta_0]\cdot [s]=1
\end{equation*}
the intersection matrix of $\Theta_1,...,\Theta_{4+b}$ is:
\begin{equation*}
R_p=
\setlength{\arraycolsep}{1pt} 
\left(
\begin{array}{@{}ccc|cccc@{}}
  -2     &        &        & 1      & 0      & \cdots & 0      \\ 
         & -2     &        & 0      &        & \cdots & 1      \\ 
         &        & -2     & 0      &        & \cdots & 1      \\ 
  \hline
  1      & 0      & 0      & -2     & 1      &        &        \\ 
  0      &        & 0      & 1      & -2     & 1      &        \\ 
  \vdots & \vdots & \vdots &        & 1      & -2     & \ddots \\
  0      & 1      & 1      &        &        & \ddots & -2     \\ 
\end{array}
\right)
\end{equation*}
we have:
\begin{equation*}
    [\eta_X]\cdot (\Theta_1,...,\Theta_{4+b})
    =\left(0,0,0,\frac{1}{4b},\frac{1}{2b},...,\frac{1}{2b},\frac{1}{4b}\right)
\end{equation*}
so:
\begin{equation*}
\begin{split}
     N_p&=\left(0,0,0,\frac{1}{4b},\frac{1}{2b},...,\frac{1}{2b},\frac{1}{4b}\right)
\cdot R_p^{-1}\cdot \begin{pmatrix}
        \Theta_1\\
        \vdots\\
        \Theta_{4+b}
    \end{pmatrix}\\
    &=-\frac{1}{2}\Theta_1-\frac{b+4}{8}\Theta_2-\frac{b+4}{8}\Theta_3-\sum\limits_{n=0}^{b}\left(1+\frac{n(2b-n)}{4b}\right)\Theta_{4+n}
\end{split}
\end{equation*}
(2.7) $X_p$ is of type $\mathrm{I}_b,b\ge 2$:
\begin{equation*}
    X_p=\Theta_0+\Theta_1+...+\Theta_{b-1},\ [\Theta_0]\cdot [s]=1
\end{equation*}
the intersection matrix of $\Theta_1,...,\Theta_{b-1}$ is:
\begin{equation*}
    \setlength{\arraycolsep}{1pt}
    R_p=
\begin{pmatrix}
-2 & 1 & & & & \\
1 & -2 & 1 & & & \\
& 1 & -2 & 1 & & \\
& & \ddots & \ddots & \ddots & \\
& & & 1 & -2 & 1 \\
& & & & 1 & -2
\end{pmatrix}
\end{equation*}
we have:
\begin{equation*}
    [\eta_X]\cdot (\Theta_1,...,\Theta_n)=(1/b,...,1/b)
\end{equation*}

so:
\begin{equation*}
\begin{split}
    N_p&=(1/b,...,1/b)\cdot R_p^{-1}\cdot \begin{pmatrix}
        \Theta_1\\
        \vdots\\
        \Theta_{b}
    \end{pmatrix}\\
    &=-\sum\limits_{i=1}^{b-1}\frac{i(b+1-i)}{2b}\Theta_{i}
\end{split}
\end{equation*}

In summary, we have the following:
\begin{proposition}\label{class}
    The associated class $[\eta_X]\in H^{1,1}_{\mathbb{R}}(X)$ can be computed as following:
    \begin{equation*}
        [\eta_X]=[s]+\left(\frac{\pi d}{3}+\chi\right)[F]+\sum_{p}N_p+E_X
    \end{equation*}
\end{proposition}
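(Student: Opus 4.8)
The strategy is purely linear‑algebraic once a handful of intersection numbers are known. Recall from the corollary established above that $[\eta_X]$ is algebraic, i.e.\ $[\eta_X]\in NS(X)\otimes\mathbb{R}$. Since the Gram matrix $M_a$ of the basis $[s],[F],\{[\Theta_{pi}]\}_{p,i},\{\psi(s_j)\}_j$ of $NS(X)\otimes\mathbb{Q}$ is block‑diagonal with the three non‑degenerate blocks $\left(\begin{smallmatrix}-\chi&1\\1&0\end{smallmatrix}\right)$, $\bigoplus_pR_p$ and $E$, the induced decomposition $NS(X)\otimes\mathbb{R}=\langle[s],[F]\rangle\oplus\bigoplus_p\langle[\Theta_{pi}]\rangle_i\oplus\langle\psi(s_j)\rangle_j$ is orthogonal; hence $[\eta_X]=T_X+N_X+E_X$ with $T_X$, $N_X=\sum_pN_p$, $E_X$ given by the projection formulas already displayed, and the whole proof reduces to computing the pairings $[\eta_X]\cdot[F]$, $[\eta_X]\cdot[s]$ and $[\eta_X]\cdot[\Theta_{pi}]$; the term $E_X$ is then just the recorded projection onto the $\psi(s_j)$.

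For $T_X$: one has $[\eta_X]\cdot[F]=1$ directly from the normalization that regular fibres have volume $1$ (the current extension of $\eta_X$ to $X$ does not affect the cohomological pairing with a fibre). Next, $[\eta_X]\cdot[s]=\int_{s(\Sigma_g)}\eta_X=\int_{\Sigma_g}s^*\eta_X$; on $X^*$ the zero section pulls $\eta$ back through $h_{\tilde\omega}$ to $\omega^*\eta_{-1}$, where $\omega\colon\Sigma_g^*\to\mathbb{H}^+/\mathrm{PSL}_2(\mathbb{Z})$ is the period map and $\eta_{-1}=v^{-2}du\wedge dv$ is the hyperbolic metric (indeed $s^*\eta=\tfrac{\sqrt{-1}}{2}\alpha\,d\zeta\wedge d\bar\zeta$ with $\alpha=1/v^2$ on $\mathbb{H}^+\times_{Id}\mathbb{C}$). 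Because $s^*\eta_X$ has only double‑logarithmic or bounded singularities at the finitely many punctures, its integral over $\Sigma_g$ equals that over $\Sigma_g^*$, which is $\deg(\omega)=d$ times the hyperbolic area $\pi/3$ of the modular orbifold; thus $[\eta_X]\cdot[s]=\pi d/3$. Inverting $\left(\begin{smallmatrix}-\chi&1\\1&0\end{smallmatrix}\right)$ and applying the projection formula gives $T_X=[s]+\left(\tfrac{\pi d}{3}+\chi\right)[F]$.

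For $N_p$: I would compute $[\eta_X]\cdot[\Theta_{pi}]$ fibre by fibre. When $J(p)\ne\infty$, pass to the canonical reduction $X'$ and the extension $\eta_X'$ of Lemma~\ref{extension1}, and use the reduction map $R\colon X'\to\check X$ with the projection formula $[\eta_X]\cdot[C]=[\eta_X']\cdot[R^{-1}(C)]$. Two facts do all the work: (i) along the central curve $\Theta\subset X_p'$ the form $\eta_X'$ restricts to the flat fibre metric descended from the central elliptic curve $\mathbb{C}/(\mathbb{Z}+\mathbb{Z}\,\omega(p))$ by the $\mathbb{Z}_{h_p}$‑quotient, so $\int_\Theta\eta_X'=1/h_p$ (the reduction at the $\mathbb{Z}_{h_p}$‑fixed points does not change the total integral, because there $\eta_X'$ has a $C^\alpha$‑vanishing potential); (ii) along every other component of $X_p'$ the form $\eta_X'$ is $C^\alpha$‑vanishing, hence restricts to $\sqrt{-1}\partial\bar\partial$ of a locally constant function and integrates to $0$. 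Feeding $1/h_p$ and $0$ through the explicit total transforms $R^{-1}(\Theta_{pi})$ and the matrices $R_p$ yields the tables for $\mathrm{I}_0^*,\mathrm{II},\mathrm{III},\mathrm{III}^*,\mathrm{IV},\mathrm{IV}^*,\mathrm{II}^*$. For $\mathrm{I}_b$ the value $[\eta_X]\cdot[\Theta_{pi}]=1/b$ follows from a direct computation on the local model $W'$ (or from its order‑$b$ symmetry $[\tau,w]\mapsto[\tau,e^{2\pi\sqrt{-1}/b}w]$, which preserves $\eta_X$, cyclically permutes the $b$ components, and forces the $b$ equal pairings to sum to $[\eta_X]\cdot[F]=1$). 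For $\mathrm{I}_b^*$ the analogous computation on the folded model, using the explicit potentials of Section~4, gives mass $0$ to the four multiplicity‑$1$ ends and the values $\tfrac{1}{4b},\tfrac{1}{2b},\dots,\tfrac{1}{2b},\tfrac{1}{4b}$ along the multiplicity‑$2$ chain. In every case applying $R_p^{-1}$ produces the displayed $N_p$, and assembling $T_X$, $N_X$ and $E_X$ gives the formula.

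The principal obstacle is precisely this case‑by‑case evaluation of $[\eta_X]\cdot[\Theta_{pi}]$. The clean inputs $\int_\Theta\eta_X'=1/h_p$ and the vanishing along $C^\alpha$‑components settle all fibres with $J(p)\ne\infty$, but the type‑$\mathrm{I}_b^*$ fibres genuinely require working with a double‑logarithmic pole along \emph{every} component, so one must argue carefully that $[\eta_X]\cdot[\Theta_{pi}]$ is still computed by the appropriate limiting integral and track exactly how the unit fibre‑volume is apportioned among the ends and the chain. A subsidiary point, used throughout, is that for a curve $C$ lying inside a singular fibre the cohomological number $[\eta_X]\cdot[C]$ coincides with the naive integral of the extended current; this rests on Corollary~\ref{extension-first} together with the local integrability of the potentials and the vanishing of Lelong numbers off a finite set.
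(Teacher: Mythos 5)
Your proposal is correct and follows essentially the same route as the paper: algebraicity of $[\eta_X]$ plus the orthogonal block decomposition of $NS(X)\otimes\mathbb{R}$, the pairings $[\eta_X]\cdot[F]=1$ and $[\eta_X]\cdot[s]=\pi d/3$, and the case-by-case evaluation of $[\eta_X]\cdot[\Theta_{pi}]$ through the canonical reduction, with mass $1/h_p$ on the central component and $0$ on the $C^{\alpha}$-vanishing ones, then inverting $R_p$. The only (cosmetic) slip is in your parenthetical for type $\mathrm{I}_b$: the rotation $[\tau,w]\mapsto[\tau,e^{2\pi\sqrt{-1}/b}w]$ preserves each component rather than permuting them, the permuting $b$-torsion translation being $[\tau,w]\mapsto[\tau,w\tau]$; your primary justification (direct computation on $W'$, with the $b$ equal pairings summing to $[\eta_X]\cdot[F]=1$) matches what the paper does.
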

where:
\begin{equation*}
    E_X=([\eta_X]\cdot \psi(s_1),...,[\eta_X]\cdot\psi(s_r))\cdot E^{-1}\cdot \begin{pmatrix}
        \psi(s_1)\\
        \vdots\\
        \psi(s_r)
    \end{pmatrix}
\end{equation*}
and $N_p$ listed as the table \ref{listofNX}.
\begin{table}[]
\centering
    \caption{List of $N_p$}
    \label{listofNX}
    \begin{tabular}{@{} >{\centering\arraybackslash}m{1cm} >{\centering\arraybackslash}m{5cm} >{\centering\arraybackslash}m{5cm} @{}}
    \toprule
    Fiber type & Dynkin diagram & $N_p$ \\
    \midrule
        $\mathrm{I_0^*}$ &
           \begin{tikzpicture}[scale=0.8]
              \node (T0) at (-1.5,0.5) {$\Theta_0$};
              \node (T3) at (1.5,-0.5) {$\Theta_3$};
              \node (T2) at (1.5,0.5) {$\Theta_2$};
              \node (T1) at (-1.5,-0.5) {$\Theta_1$};
              \node (T4) at (0,0) {$\Theta_4$};
              \draw[dashed]  (T0.east)--(T4.west);
              \draw  (T1.east)--(T4.west);
              \draw  (T2.west)--(T4.east);
              \draw  (T3.west)--(T4.east);
           \end{tikzpicture}&
           $-\frac{1}{2}\Theta_1-\frac{1}{2}\Theta_2-\frac{1}{2}\Theta_3-\Theta_4$\\
   \midrule
        $$\mathrm{I}_b^*,b\ge 1$$&

            \begin{tikzpicture}[scale=0.8]
              \node (T0) at (-1.5,0.5) {$\Theta_0$};
              \node (T3) at (3.5,-0.5) {$\Theta_3$};
              \node (T2) at (3.5,0.5) {$\Theta_2$};
              \node (T1) at (-1.5,-0.5) {$\Theta_1$};
              \node (T4) at (0,0) {$\Theta_4$};
              \node (T5) at (2,0) {$\Theta_{b+4}$};
              \draw[dashed]  (T0.east)--(T4.west);
              \draw  (T1.east)--(T4.west);
              \draw  (T4.east)--(T5.west);
              \draw  (T5.east)--(T2.west);
              \draw  (T5.east)--(T3.west);
              \end{tikzpicture} &
                              $ -\frac{1}{2}\Theta_1-\frac{b+4}{8}\Theta_2
                              -\frac{b+4}{8}\Theta_3-\sum\limits_{n=0}^{b}
                              \left(
                              1+\frac{n(2b-1)}{4b}\Theta_{4+n}
                              \right)
                              $ \\
\midrule
$$\mathrm{I}_b,b\ge 2$$ &
    \begin{tikzpicture}[scale=0.8]
    \node (T0) at (-0,0.6) {$\Theta_0$};
    \node (T1) at (-2,0) {$\Theta_1$};
    \node (T2) at (2,0) {$\Theta_{b-1}$};
    \draw [dashed] (T0.west)--(T1.east);
   \draw [dashed] (T0.east)--(T2.west);
   \draw  (T1.east)--(-1,0);
   \draw  (T2.west)--(1,0);
   \draw [dashed]  (-1,0)--(1,0);
\end{tikzpicture}&
$$
-\sum\limits_{i=1}^{b-1}\frac{i(b+1-i)}{2b}\Theta_i
$$\\
\midrule
$$\mathrm{II^*}$$ &
    \begin{tikzpicture}[rotate=-90][scale=0.8]
        \node (T0) at (-1,0.3) {$\Theta_0$};
        \node (T1) at (-1,-0.7) {$\Theta_1$};
        \node (T2) at (0,-0.7) {$\Theta_2$};
        \node (T3) at (0,0.3) {$\Theta_3$};
        \node (T4) at (0,1.3) {$\Theta_4$};
        \node (T5) at (0,2.3) {$\Theta_5$};
        \node (T7) at (-1,2.3){$\Theta_7$};
        \node (T6) at (0,3.3) {$\Theta_6$};
        \node (T8) at (-1,3.3) {$\Theta_8$};
        \draw (T1.south)--(T2.north);
        \draw (T2.east)--(T3.west);
        \draw (T3.east)--(T4.west);
        \draw (T4.east)--(T5.west);
        \draw (T5.east)--(T6.west);
        \draw (T6.north)--(T8.south);
        \draw[dashed] (T0.west)--(T1.east);
        \draw (T5.north)--(T7.south);
    \end{tikzpicture} &
    $-\Theta_1-2\Theta_2-3\Theta_3-4\Theta_4-5\Theta_5-\frac{10}{3}\Theta_6-\frac{5}{2}\Theta_7-\frac{5}{3}\Theta_8$\\
\midrule
$$\mathrm{III^*}$$ &
\begin{tikzpicture}[rotate=-90][scale=0.8]
        \node (T0) at (-1,-1) {$\Theta_0$};
        \node (T1) at (0,-1) {$\Theta_1$};
        \node (T2) at (0,0) {$\Theta_2$};
        \node (T3) at (0,1) {$\Theta_3$};
        \node (T5) at (-1,1) {$\Theta_5$};
        \node (T4) at (0,2) {$\Theta_4$};
        \node (T6) at (0,3) {$\Theta_6$};
        \node (T7) at (-1,3) {$\Theta_7$};
        \draw [dashed]  (T0.south)--(T1.north);
        \draw  (T1.east)--(T2.west);
        \draw (T2.east)--(T3.west);
        \draw  (T3.east)--(T4.west);
        \draw  (T4.east)--(T6.west);
        \draw  (T6.north)--(T7.south);
        \draw  (T3.north)--(T5.south);
    \end{tikzpicture} &
    $-\Theta_1-2\Theta_2-3\Theta_3-\frac{9}{4}\Theta_4-\frac{3}{2}\Theta_5-\frac{3}{2}\Theta_6-\frac{3}{4}\Theta_7$ \\
\midrule
$$
\mathrm{IV^*}
$$ &
\begin{tikzpicture}[scale=0.8]
    \node (N2) at (0,0) {$\Theta_2$};

    \node (N1) at (-1,0) {$\Theta_1$};
    \node (N0) at (-2,0) {$\Theta_0$};

    \node (N4) at (1,0) {$\Theta_4$};
    \node (N6) at (2,0) {$\Theta_6$};

    \node (N3) at (0,1) {$\Theta_3$};
    \node (N5) at (0,2) {$\Theta_5$};

    \draw[dashed] (N0) -- (N1); 
    \draw (N1) -- (N2);         
    \draw (N2) -- (N4);         
    \draw (N4) -- (N6);         
    \draw (N2) -- (N3);         
    \draw (N3) -- (N5);         
\end{tikzpicture} &
$-\Theta_1-2\Theta_2-\frac{4}{3}
        \Theta_3-\frac{4}{3}\Theta_4-\frac{2}{3}\Theta_5-\frac{2}{3}\Theta_6$\\
\midrule
$$
\mathrm{III}
$$ &
\begin{tikzpicture}
    \node (T0) at (-1,0) {$\Theta_0$};
    \node (T1) at (1,0) {$\Theta_1$};
    \draw [dashed]  (-0.8,0.1)--(0.8,0.1);
    \draw [dashed]  (-0.8,-0.1)--(0.8,-0.1);
\end{tikzpicture} &
$$-\frac{1}{4}\Theta_1$$ \\
\midrule
$$
\mathrm{IV}
$$ &
\begin{tikzpicture}
\node (T0) at (0,0.3) {$\Theta_0$};
    \node (T1) at (-1,-0.5) {$\Theta_1$};
    \node (T2) at (1,-0.5) {$\Theta_2$};
    \draw[dashed]  (T0.south west)--(T1.north east);
    \draw[dashed]  (T0.south east)--(T2.north west);
    \draw  (T1.east)--(T2.west);
\end{tikzpicture} &
$-\frac{1}{3}\Theta_1-\frac{1}{3}\Theta_2$\\
\bottomrule
    \end{tabular}
\end{table}
\newpage
Let $\eta_X(\epsilon)$ be the semi-flat cscK metric induced from
the semi-flat cscK metric $\eta(\epsilon)$ on $\mathbb{H}^+\times_{Id}\mathbb{C}$:
\begin{equation*}
    \eta(\epsilon)=\sqrt{-1}\partial\bar{\partial}\left(-\log v^2+\epsilon
    \frac{y^2}{v}\right)
\end{equation*}

then $\eta_X(\epsilon)=\Phi^*\eta_1+\epsilon \eta_2$
where $\eta_1=s^*\eta_X$ induced from $\sqrt{-1}\partial\bar{\partial}(-\log v^2)$, and for any section $s_i\in MW(X/\Sigma_g)$, we have
\begin{equation*}
\begin{split}
     [\Phi^*\eta_1]\cdot [s_i]= \int_{s_i(\Sigma_g^*)}\Phi^*\eta_1&=\int_{\Sigma_g^*}(\Phi\circ s_i)^*\eta_1=
    \int_{\Sigma_g^*}\eta_1=[\Phi^*\eta_1]\cdot [s]
\end{split}
\end{equation*}
and
\begin{equation*}
  [\Phi^*\eta_1]\cdot [F] =0
\end{equation*}
Therefore:
\begin{equation*}
\begin{split}
      [\eta_X(\epsilon)]\cdot \psi(s_i)&= [\eta_X(\epsilon)]\cdot
      ([\Phi^*\eta_1]+\epsilon \eta_2)\cdot ([s_i]-[s]-([s_i]\cdot [s]+\chi)[F])\\
     &=\epsilon  [\eta_X(\epsilon)]\cdot \psi(s_i)
\end{split}
\end{equation*}

\begin{corollary}
    The associated class of $[\eta_X(\epsilon)]$ is:
    \begin{equation*}
        [\eta_X(\epsilon)]=\frac{\pi d}{3}[F]+\epsilon\left(
        [s]+\chi[F]+\sum_{p}N_p+E_X\right)
    \end{equation*}
\end{corollary}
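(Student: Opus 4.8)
The plan is to express $[\eta_X(\epsilon)]$ in terms of the already-known class $[\eta_X]$ from Proposition \ref{class} and the class of the pulled-back base metric $\Phi^*\eta_1$. On $X^*$ one has the pointwise identity
\begin{equation*}
\eta_X(\epsilon)=\Phi^*\eta_1+\epsilon\,\eta_2=\epsilon\,\eta_X+(1-\epsilon)\,\Phi^*\eta_1 ,
\end{equation*}
where $\eta_1=s^*\eta_X$ is the finite-area singular metric on $\Sigma_g$ coming from $\sqrt{-1}\partial\bar\partial(-\log v^2)$ and $\eta_2=\eta_X-\Phi^*\eta_1$ is the semi-positive $(1,1)$-form coming from $\sqrt{-1}\partial\bar\partial(y^2/v)$. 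Each of the three currents $\eta_X(\epsilon)$, $\eta_X$, $\Phi^*\eta_1$ extends across the singular fibers to a closed positive (for $\epsilon\le 1$) or merely closed $(1,1)$-current on the compact surface $X$: for $\eta_X$ and $\eta_X(\epsilon)$ this is exactly the content of Section 4, whose arguments apply verbatim for every $\epsilon>0$ since $\eta(\epsilon)$ is K\"ahler for all $\epsilon>0$; and $\Phi^*\eta_1$ extends because $\eta_1$ is the pullback by the everywhere-defined $J=j\circ\omega$ of a finite-mass singular metric on $\mathbb P^1$, hence extends across the finite set $\mathcal P\subset\Sigma_g$, and $\Phi$ is globally holomorphic. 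Since the extension procedure is just $\partial\bar\partial$ of a potential continued across the fibers, it is additive, and the relation above descends to the cohomological identity $[\eta_X(\epsilon)]=\epsilon[\eta_X]+(1-\epsilon)[\Phi^*\eta_1]$ in $H^{1,1}_{\mathbb R}(X)$.

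Next I would pin down $[\Phi^*\eta_1]$. Because $\Phi^*\eta_1$ is pulled back from the curve $\Sigma_g$, and $H^{1,1}(\Sigma_g)=H^2(\Sigma_g,\mathbb R)$ is one-dimensional, $[\Phi^*\eta_1]$ lies in $\Phi^*H^2(\Sigma_g)=\mathbb R\,[F]$; concretely $[\Phi^*\eta_1]\cdot[F]=0$ and $[\Phi^*\eta_1]\cdot[\Theta_{pi}]=0$ for every component $\Theta_{pi}$ of every singular fibre, since fibres and their components map to points, and likewise $[\Phi^*\eta_1]\cdot\psi(s_i)=0$ as $\psi(s_i)\in T^{\perp}$ while $[F]\in T$. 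Pairing with the zero section, $[\Phi^*\eta_1]\cdot[s]=\int_{s(\Sigma_g)}\Phi^*\eta_1=\int_{\Sigma_g}\eta_1=\tfrac{\pi d}{3}$ — the value computed in Proposition \ref{class} as the hyperbolic volume of the $d$-sheeted period map — together with $[s]\cdot[F]=1$ forces $[\Phi^*\eta_1]=\tfrac{\pi d}{3}[F]$.

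Finally I would substitute $[\eta_X]=[s]+\left(\tfrac{\pi d}{3}+\chi\right)[F]+\sum_p N_p+E_X$ from Proposition \ref{class} into $[\eta_X(\epsilon)]=\epsilon[\eta_X]+(1-\epsilon)\tfrac{\pi d}{3}[F]$; the $\epsilon\tfrac{\pi d}{3}[F]$ contributions cancel and one is left with
\begin{equation*}
[\eta_X(\epsilon)]=\tfrac{\pi d}{3}[F]+\epsilon\left([s]+\chi[F]+\sum_p N_p+E_X\right),
\end{equation*}
which is the assertion. Equivalently one may argue component by component in the decomposition $[\eta_X(\epsilon)]=T_X(\epsilon)+N_X(\epsilon)+E_X(\epsilon)$: the two facts recorded just above the corollary, $[\Phi^*\eta_1]\cdot[s_i]=[\Phi^*\eta_1]\cdot[s]$ and $[\Phi^*\eta_1]\cdot[F]=0$, give $[\eta_X(\epsilon)]\cdot\psi(s_i)=\epsilon[\eta_X]\cdot\psi(s_i)$, hence $E_X(\epsilon)=\epsilon E_X$; the vanishing $[\Phi^*\eta_1]\cdot[\Theta_{pi}]=0$ gives $[\eta_X(\epsilon)]\cdot[\Theta_{pi}]=\epsilon[\eta_X]\cdot[\Theta_{pi}]$, hence $N_X(\epsilon)=\epsilon\sum_p N_p$; and $\big([\eta_X(\epsilon)]\cdot[s],\,[\eta_X(\epsilon)]\cdot[F]\big)=\big(\tfrac{\pi d}{3},\,\epsilon\big)$ fed into the trivial-lattice formula yields $T_X(\epsilon)=\tfrac{\pi d}{3}[F]+\epsilon([s]+\chi[F])$.

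The step needing the most care is the first: verifying that the linear relation among the three currents, obvious on $X^*$, genuinely survives the passage to the compact $X$, i.e. that the trivial and canonical extensions of Section 4 are compatible with scalar multiplication and addition. This is really bookkeeping — all three pieces extend, and the extensions are $\partial\bar\partial$ of potentials continued across the singular fibers, an operation that commutes with linear combinations — but it is the only place where the current-theoretic nature of $\eta_X(\epsilon)$, rather than a purely formal manipulation of classes, is invoked. All the remaining ingredients (one-dimensionality of $H^2(\Sigma_g)$, the intersection numbers with $[s]$, $[F]$, $[\Theta_{pi}]$, $\psi(s_i)$, and the value $\pi d/3$) are already established in the earlier sections.
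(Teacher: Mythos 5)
Your proposal is correct and follows essentially the same route as the paper: both start from the decomposition $\eta_X(\epsilon)=\Phi^*\eta_1+\epsilon\,\eta_2$, use $[\Phi^*\eta_1]\cdot[F]=0$, $[\Phi^*\eta_1]\cdot[s_i]=[\Phi^*\eta_1]\cdot[s]=\pi d/3$, and the vanishing of $\Phi^*\eta_1$ on fiber components to see that the $[s]$, $\sum_p N_p$ and $E_X$ contributions all scale by $\epsilon$ while the base contribution $\frac{\pi d}{3}[F]$ stays fixed. Your repackaging as $[\eta_X(\epsilon)]=\epsilon[\eta_X]+(1-\epsilon)[\Phi^*\eta_1]$ with $[\Phi^*\eta_1]=\frac{\pi d}{3}[F]$ is just a cleaner way of organizing the same intersection-number bookkeeping, and your component-by-component alternative at the end is literally the paper's argument.
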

Let
\begin{equation}\label{DX}
    D_X:=[s]+\sum_{p}N_p+E_X
\end{equation}

\begin{theorem}\label{series}
If the genus $g\ge 1$, An $H(X,\Phi,s)$-invariant semi-flat cscK current $\eta(t)$ in the class $K_X+tD_X$ exists if and only if  $t\in \left(0,1+\frac{2g-2}{\chi}\right)$, here $d$
    is the degree of the Jacobian $J:\Sigma_g \rightarrow \mathbb{P}^1$,
    and $\chi$ is the Euler-Poinca\'re character of $X$.Furthermore, if $X$
possesses at least one singular fiber $X_p$ other than of type $I_b,I_b^*$,
then this current is unique. The fiber volume and scalar curvarure of $\eta(t)$ are given by:
\begin{equation*}
\begin{split}
    \text{fiber volume} &=t\\
\text{scalar curvature}&=\frac{-2\pi d}{2g-2+\chi(1-t)}
\end{split}
\end{equation*}
\end{theorem}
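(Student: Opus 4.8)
The plan is to reduce everything to the explicit two–parameter family $\eta_X(\delta,\epsilon)=h_{\tilde\omega}^*\eta(\delta,\epsilon)$ of Section 2, its cohomology class from Section 5, and the rigidity of Section 3. First I would record the canonical class via Kodaira's canonical bundle formula for a relatively minimal elliptic surface carrying a section (hence without multiple fibres): $K_X=\Phi^*(K_{\Sigma_g}\otimes\mathcal L)$ with $\deg\mathcal L=\chi(\mathcal O_X)=\chi$, so that $[K_X]=(2g-2+\chi)[F]$ in $NS(X)\otimes\mathbb R$; in particular $K_X\cdot[F]=0$ and $K_X\cdot[s]=2g-2+\chi$, using $[F]^2=0$. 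Next I would note that $\eta_X(\delta,\epsilon)=\delta\,\Phi^*\eta_1+\epsilon\,\eta_2$, and that the local analysis of Section 4 at the singular fibres depends only on the shape of the local K\"ahler potentials, which differ from those of $\eta_X$ only by the overall factor $\delta$ and the fibrewise factor $\epsilon$ (affecting neither $L^1$–integrability nor the $C^\alpha$–vanishing orders); so the extension to a closed positive $(1,1)$–current on all of $X$ goes through uniformly for all $\delta,\epsilon>0$. Re-running the computation behind Proposition \ref{class} and its corollary then gives
\[
[\eta_X(\delta,\epsilon)]=\delta\,\tfrac{\pi d}{3}[F]+\epsilon\bigl(D_X+\chi[F]\bigr),\qquad D_X=[s]+\sum_pN_p+E_X.
\]

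Because $D_X$ has $[s]$–coordinate $1$ and $[F]$–coordinate $0$ in the basis $[s],[F],\{[\Theta_{pi}]\},\{\psi(s_j)\}$, the classes $D_X$ and $[F]$ are linearly independent, so matching $[\eta_X(\delta,\epsilon)]$ with $K_X+tD_X=(2g-2+\chi)[F]+tD_X$ forces $\epsilon=t$ and $\delta=\tfrac{3}{\pi d}\bigl(2g-2+\chi(1-t)\bigr)$. Since $J$ is non-constant there are singular fibres, hence $e(X)>0$ and $\chi=\tfrac1{12}e(X)>0$, and $d=\deg J\ge1$. Therefore $\eta_X(\delta,\epsilon)$ is an honest semi-flat cscK metric on $X^*$ — equivalently its extension is a semi-flat cscK current on $X$ lying in $K_X+tD_X$ — exactly when $\epsilon>0$ and $\delta>0$, i.e. $t>0$ and $2g-2+\chi(1-t)>0$, the latter being $t<1+\tfrac{2g-2}{\chi}$, a non-empty interval when $g\ge1$. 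Its $H(X,\Phi,s)$–invariance is Proposition \ref{HXinvariant} applied to the $SL_2(\mathbb Z)$–invariant metric $\eta(\delta,\epsilon)$. This settles the ``if'' direction and the claimed range, and the fibre volume and scalar curvature then follow by substituting the determined values of $\delta,\epsilon$ into the expressions of the Remark after Corollary \ref{pi1invariant}: the fibre volume equals $[K_X+tD_X]\cdot[F]=t$, and the scalar curvature is the constant $\tfrac{-2\pi d}{2g-2+\chi(1-t)}$.

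For the converse, given any semi-flat cscK current $\omega$ in $K_X+tD_X$, I would pair the closed positive current $\omega$ with the integration currents of a generic fibre and of the zero section — both of which meet $X^*$, where $\omega|_{X^*}$ is a genuine K\"ahler metric, in a dense open set — obtaining $[\omega]\cdot[F]>0$ and $[\omega]\cdot[s]>0$. Using $[F]^2=0$, $D_X\cdot[F]=1$, $[s]^2=-\chi$ (adjunction on $s(\Sigma_g)$), and $D_X\cdot[s]=-\chi$, these become $t>0$ and $2g-2+\chi(1-t)>0$, i.e. $t\in(0,1+\tfrac{2g-2}{\chi})$. For the uniqueness clause, assume $X$ has a singular fibre not of type $I_b$ or $I_b^*$ (equivalently some $X_p$ with $J(p)\ne\infty$) and let $\omega$ be $H(X,\Phi,s)$–invariant, semi-flat cscK, in $K_X+tD_X$. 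Following the proof of Theorem \ref{uniqueness}, $\omega|_{X^*}$ descends to an $\rho(\pi_1(\Sigma_g^*))$–invariant semi-flat cscK metric on $\mathbb H^+_*\times_{Id}\mathbb C$; the singular-fibre hypothesis puts into the monodromy group a unipotent element conjugate to $\bigl(\begin{smallmatrix}1&b\\0&1\end{smallmatrix}\bigr)$ together with a finite-order element conjugate to one of $\pm\bigl(\begin{smallmatrix}0&1\\-1&0\end{smallmatrix}\bigr),\pm\bigl(\begin{smallmatrix}1&1\\-1&0\end{smallmatrix}\bigr),\pm\bigl(\begin{smallmatrix}0&-1\\1&1\end{smallmatrix}\bigr)$, so a rescaled version of Lemma \ref{strongversion} forces $\omega|_{X^*}=\eta_X(\delta',\epsilon')$ for some $\delta',\epsilon'>0$; the class matching then pins down $\epsilon'=t$, $\delta'=\tfrac{3}{\pi d}(2g-2+\chi(1-t))$, so $\omega=\eta_X(\delta',\epsilon')=:\eta(t)$.

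The hard part will be this last step: one needs Lemma \ref{strongversion} in the form valid for arbitrary positive fibre volume and negative scalar curvature, not just the normalized values $1$ and $-3$. The homothety $\eta\mapsto c\eta$ sends this data to $(cV,\,S/c)$, so the general statement reduces to the normalized one; what must be verified is that the hypotheses of Lemma \ref{strongversion} survive the rescaling — the $SL_2(\mathbb Z)$–invariance and the semi-flat cscK structure are scale-invariant — and that the singular-fibre condition does supply the required monodromy generators, which is precisely the content of ``a singular fibre other than $I_b,I_b^*$''. A secondary, routine point is the uniformity in $(\delta,\epsilon)$ of the Section 4 extension, which is what makes the class formula of the first paragraph valid across the whole open interval.
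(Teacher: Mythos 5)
Your existence argument coincides with the paper's: the paper takes the one--parameter family $\eta_X(\epsilon)=\eta_X(1,\epsilon)$, whose class is $\frac{\pi d}{3}[F]+\epsilon(D_X+\chi[F])$, multiplies by $\frac{2g-2+\chi}{\pi d/3+\epsilon\chi}$, and uses $K_X=(2g-2+\chi)[F]$; since $\eta_X(\delta,\epsilon)=\delta\,\eta_X(1,\epsilon/\delta)$, your two--parameter matching $\epsilon=t$, $\delta=\frac{3}{\pi d}\bigl(2g-2+\chi(1-t)\bigr)$ is the same computation in different coordinates. Your additions --- the ``only if'' direction via pairing $[\omega]$ with $[F]$ and $[s]$ (using $D_X\cdot[F]=1$ and $D_X\cdot[s]=[s]^2=-\chi$, since $N_p$ and $E_X$ are orthogonal to $[s]$), and an explicit treatment of the uniqueness clause --- address parts of the statement that the paper's displayed proof does not argue at all, and the intersection--theoretic computations are correct.

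The genuine gap is in your reduction of the uniqueness to Lemma \ref{strongversion}. A single homothety $\eta\mapsto c\eta$ sends the data $(V,S)$ to $(cV,S/c)$, so it can normalize the fibre volume to $1$ \emph{or} the scalar curvature to $-3$, but not both unless $SV=-3$ already holds; and for a putative second invariant current in $K_X+tD_X$ the class only pins down $V=t$, not $S$. So the normalized lemma does not suffice as you invoke it: you need the rigidity statement for fibre volume $1$ and an arbitrary negative constant scalar curvature $S$, which amounts to re-running the proof of Theorem \ref{universal-rigidity} with the equation $\triangle\log f=v^{-2}\bigl((-S)f-3\bigr)$ in place of $\triangle\log f=3v^{-2}(f-1)$; the same boundary--term argument still yields $f_u\equiv 0$ and then $f\equiv 3/(-S)$, after which the class matching determines $S$ and hence $\delta$. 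The gap is repairable, but the reduction as written is false. A second, smaller point: substituting your $\delta=\frac{3}{\pi d}\bigl(2g-2+\chi(1-t)\bigr)$ into the Remark's formula ``scalar curvature $=-3/\delta$'' yields $\frac{-\pi d}{2g-2+\chi(1-t)}$ rather than $\frac{-2\pi d}{2g-2+\chi(1-t)}$; this factor of $2$ should be reconciled with the normalization of scalar curvature fixed in Lemma \ref{G.I}, noting that with $t_0=\frac{2g-2+\chi}{\pi d/3+\chi}$ only the value $\frac{-\pi d}{2g-2+\chi(1-t_0)}$ makes $t_0^{-1}\eta(t_0)$ have scalar curvature $-3$ as asserted in Theorem \ref{T1}.
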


\begin{proof}
    By the Kodaira canonical bundle formua:
    \begin{equation*}
        K_X=(2g-2+\chi)[F]
    \end{equation*}
therefore:
\begin{equation*}
    \frac{2g-2+\chi}{\pi d/3+\epsilon\chi} [\eta_X(\epsilon)]
    =K_X+\frac{\epsilon(2g-2+\chi)}{\pi d/3+\epsilon\chi}D_X
\end{equation*}
notice that:
\begin{equation*}
    0<\frac{\epsilon(2g-2+\chi)}{\pi d/3+\epsilon\chi}<1+\frac{2g-2}{\chi},\forall \epsilon>0
\end{equation*}
take $t=\frac{\epsilon(2g-2+\chi)}{\pi d/3+\epsilon\chi}$
and $\eta(t)=\frac{t}{\epsilon}\eta_X(\epsilon)$ is desired.
\end{proof}

\section{Further questions}
There are some further questions about this topic:

\textbf{Question 1:} If $X$ possesses singular fibers only of type $I_b,I_b^*$,
does the uniqueness for this current still hold?

\textbf{Question 2:} How can we characterize the essential part $E_X\in T(X)^{\perp}$ of the associated class of this current ?

Furthermore, we suggest a certain uniformization for the elliptic surface:
\begin{conjecture}
Let $\Phi:X\rightarrow \Sigma_g$ be a minimal elliptic surface,
    there is a unique semi-flat cscK current on $X$ in the sense that if $\omega_1,\omega_2$ are two semi-flat cscK current (with fiber volume $1$ and scalar cuvature $-3$) then there exists $(h',h)\in H(X,\Phi)$, such that:
    \begin{equation*}
        h'_*\omega_1=\omega_2
    \end{equation*}
\end{conjecture}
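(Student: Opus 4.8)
The plan is to treat existence and uniqueness separately and to reduce everything possible to the machinery of Sections 2--4. For existence: if $X$ carries a holomorphic section the current $\eta_X$ of Theorem \ref{T1} already does the job. If $X$ has no section --- so that multiple fibres occur --- one re-runs the same descent over the universal cover of the base, the key point being that the model metric $\eta=\sqrt{-1}\partial\bar\partial(\log v^{-2}+y^2/v)$ on $\mathbb{H}^+\times\mathbb{C}$ is invariant not only under $SL_2(\mathbb{Z})$ but under \emph{every} fibrewise translation $z\mapsto z+c$. Thus $\eta$ descends along any quotient $(\mathbb{H}^+\times\mathbb{C})/G$ with $G$ acting by an $SL_2(\mathbb{Z})$-monodromy composed with affine fibre motions --- in particular along the local models of multiple fibres coming from Kodaira's logarithmic transformation --- and the case-by-case current extension of Section 4 applies with only cosmetic changes (the Kodaira list of fibre types is unchanged), yielding an $H(X,\Phi)$-invariant semi-flat cscK current on $X$ of regular fibre volume $1$ and scalar curvature $-3$. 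I expect this step to be routine.

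For uniqueness, let $\omega_1,\omega_2$ be semi-flat cscK currents with that normalisation. Restricting to $X^*$, pulling each back to the cover $\mathbb{H}^+\times_{\tilde\omega}\mathbb{C}$ (or its appropriate analogue when $X$ has no section) and descending through $h_{\tilde\omega}$ exactly as in the proof of Theorem \ref{uniqueness}, one obtains $\rho(\pi_1(\Sigma_g^*))$-invariant semi-flat cscK metrics $\tilde\eta_1,\tilde\eta_2$ on $\mathbb{H}^+_*\times_{Id}\mathbb{C}$, each of the form \eqref{ABCD} with data $(\alpha_i,\beta_i)$. If $X$ possesses a singular fibre \emph{not} of type $I_b$ or $I_b^*$, then $\rho(\pi_1)$ contains an element conjugate to one of the elliptic matrices admitted by Lemma \ref{strongversion} --- the monodromy of a fibre of type $\mathrm{II},\mathrm{III},\mathrm{IV},\mathrm{II}^*,\mathrm{III}^*$ or $\mathrm{IV}^*$ --- together with a parabolic, present because the non-constant $J$ has a pole and hence $X$ has an $I_b$ or $I_b^*$ fibre. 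The argument of Theorem \ref{uniqueness} then forces $\tilde\eta_1=\tilde\eta_2=\eta$, hence $\omega_1=\omega_2=\eta_X$ and one may take $(h',h)=(\mathrm{id},\mathrm{id})$; combined with the existence step this settles the conjecture for every such $X$.

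The genuine obstacle is the remaining case --- this is Question 1 --- in which all singular fibres of $X$ are of types $I_b$ and $I_b^*$: then $\rho(\pi_1)$ is generated by $\pm$parabolic matrices and $-I$, Lemma \ref{strongversion} is unavailable, and there is an honest moduli space $\mathcal{M}$ of $\rho(\pi_1)$-invariant semi-flat cscK metrics on the cover. I would attack it in two steps. First, show the scalar-curvature equation still pins down the base factor: the function $f=v^2\alpha_i$ satisfies $\triangle_{\mathrm{hyp}}\log f=3(f-1)$ on the finite-volume hyperbolic surface $\mathbb{H}^+/\rho(\pi_1)$, and one wants a Liouville-type argument --- exploiting the controlled growth of $f$ at the cusps that comes from $\eta_X$ extending as a current across the $I_b,I_b^*$ fibres (Section 4) --- to conclude $f\equiv1$ after a fibrewise translation. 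Second, show the residual translational parameter $\beta_i$ is removable by an element of $H(X,\Phi)$: the difference of two points of $\mathcal{M}$ is a twist of $\Phi$ by a fibrewise translation, classified by a class in a sheaf-cohomology group over $\Sigma_g$ attached to the variation of Hodge structure of $\Phi$ (the same group that governs semi-flat deformations), and one must prove every such class is realised by translation $T_\sigma$ along a Mordell-Weil section $\sigma\in MW(X/\Sigma_g)$, compatibly with the cscK condition and with the growth at the singular fibres. This second step --- showing the cscK equation together with the boundary conditions cuts $\mathcal{M}$ down to exactly the $MW(X/\Sigma_g)$-orbit, and that the resulting comparison automorphism extends across the finitely many excised points with no $L^1$-obstruction --- is where I expect the real work to lie, and is why the present paper states this only as a conjecture. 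In every case the last task, descending the comparison automorphism of $\mathbb{H}^+\times_{\tilde\omega}\mathbb{C}$ to a pair $(h',h)\in Aut(X)\times Aut(\Sigma_g)$, goes exactly as in the proof of Theorem \ref{uniqueness}: minimality together with $\kappa(X)\ge0$ forbids $(-1)$-curves in the image, and Riemann's removable-singularity theorem extends $h$ over $\mathcal{P}$.
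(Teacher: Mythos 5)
Note first that the paper does not prove this statement: it is stated as a conjecture, the paper explicitly says that what it proves (Theorems \ref{T1} and \ref{uniqueness}) is a weaker version, and Section 6 offers only two supporting facts. So your proposal can only be judged as a programme, and your identification of the all-$I_b,I_b^*$ case as genuinely open agrees with the paper's Question 1. The serious gap is in your uniqueness step for the ``easy'' case, where you claim the conjecture is settled outright. You assert that an arbitrary semi-flat cscK current $\omega_i$, pulled back to $\mathbb{H}^+\times_{\tilde\omega}\mathbb{C}$, ``descends through $h_{\tilde\omega}$'' to a $\rho(\pi_1(\Sigma_g^*))$-invariant metric on $\mathbb{H}^+_*\times_{Id}\mathbb{C}$. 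But in the paper's proof of Theorem \ref{uniqueness} this descent is exactly where the hypothesis of $H(X,\Phi,s)$-invariance of the competitor is consumed: one projects the maps $h_\beta^{\pm}$ (for $\beta\in\pi_1(\mathbb{H}^+_*,\xi_0)$) to elements of $H(X,\Phi,s)$, invokes the assumed invariance of $\eta'$ under those elements to get $h_\beta^{\pm *}\bar\eta'=\bar\eta'$, and only then can $\bar\eta'$ be pushed down along $h_{\tilde\omega}$ and fed into Lemma \ref{strongversion}. The conjecture imposes no invariance on $\omega_1,\omega_2$; the pullback $P^*\omega_i$ is automatically $\pi_1(\Sigma_g^*)$-invariant, but that group is far smaller than the deck group of $j\circ\tilde\omega=J\circ\pi$, so the data $(\alpha_i,\beta_i)$ of \eqref{ABCD} live only on the universal cover of $\Sigma_g^*$ and need not descend at all. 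This is precisely the gap separating Theorem \ref{T1} from the conjecture; to close it you would have to show that every normalised semi-flat cscK current is automatically $H(X,\Phi,s)$-invariant, possibly after composing with some $T_\sigma$, and nothing in your sketch does this.

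A second, more local error: your existence argument for surfaces without a section rests on the claim that $\eta$ is invariant under \emph{every} fibrewise translation $z\mapsto z+c$. This is false. The coefficient matrix of $\eta_2$ in Lemma \ref{G.I} contains the entries $y^2/v^3$ and $-y/v^2$, which are not preserved by $y\mapsto y+\operatorname{Im}c$ unless $\operatorname{Im}c=0$; equivalently, the change $2y\operatorname{Im}(c)/v+(\operatorname{Im}c)^2/v$ in the potential is not pluriharmonic. What is true is invariance under the real-affine translations $z\mapsto z+a\zeta+b$ with $a,b\in\mathbb{R}$ (this is the content of the $\mathcal{G}$-invariance computation). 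Since the gluing translations occurring in Kodaira's logarithmic transformation are by torsion sections, hence of this real-affine form, the construction can likely be repaired --- and the paper's own Fact 2 suggests the cleaner route of pushing forward $\eta_{X'}$ along a finite branched cover $L':X'\to X$ from a surface with section --- but as written your justification is incorrect. The remainder of your discussion of the hard case is a reasonable research plan consistent with why the statement is left as a conjecture, but it is not a proof and does not claim to be.
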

There are some facts about semi-flat cscK currents:

\textbf{Fact 1:} If $\omega$ is a semi-flat cscK current on $X$, then for any $(h',h)\in H(X,\Phi)$ , $h'_*\omega$ is again a semi-flat cscK current with same fiber volume and scalar curvature.

\textbf{Fact 2:} If $\Phi:X\rightarrow \Sigma_g$ is a minimal elliptic surface possessing multiple fibers, then there exists a minimal elliptic surface $\Phi: X'\rightarrow \Sigma_{g'}$ with section, and a finite branched covering pair $(L',L)$ commutes following diagram \cite{Ko63}:
\begin{equation*}
    \xymatrix{
    X'\ar[r]^{L'}\ar[d]_{\Phi'} & X\ar[d]^{\Phi} \\
    \Sigma_{g'}\ar[r]^{L} & \Sigma_g
    }
\end{equation*}

Consequently, if $\omega'$ is a semi-flat cscK current on $X'$, then $L'_*\omega'$
defines a semi-flat cscK current on $X$.

\bibliographystyle{abbrv}
\bibliography{reference}
\end{document}